\newtheorem {theorem}{Theorem}[section]
\newtheorem {lemma}[theorem]{Lemma}
\newtheorem {corollary}[theorem]{Corollary}
\newtheorem {definition}[theorem]{Definition}
\theoremstyle{remark}
\newtheorem {remark}[theorem]{Remark}
\newtheorem {example}[theorem]{Example}
\DeclareMathOperator{\coker}{coker}
\DeclareMathOperator{\im}{im}
\newcommand\Z{\mathbb{Z}}
\newcommand\Q{\mathbb{Q}}
\def\H{\mathcal{H}}
\newcommand \bJ {\bar{J}}
\newcommand \bh {\mkern3mu \overline{\mkern-3mu H \mkern-1mu} \mkern1mu}
\newcommand \bH {\overline{\H}}
\newcommand\Ta{\mathbb{T}_\alpha}
\newcommand\Tb{\mathbb{T}_\beta}
\def\Sym{\mathrm{Sym}}
\def\del {{\partial}}
\def\spinc {{\operatorname{spin^c}}}
\def\Spinc {{\operatorname{Spin^c}}}
\def\fin\qedhere
\def\pr {{\text{pr}}}
\DeclareMathOperator{\Hom}{Hom}
\def\from {{\leftarrow}}
\def\s{\mathfrak s}
\def\Tower{\mathcal{T}^+}
\def\Ring {\mathcal R}
\newcommand\alphas{\boldsymbol\alpha}
\newcommand\betas{\boldsymbol\beta}
\def\bs{\bar{\mathfrak{\s}}}
\def\gr{\mathrm{gr}}
\def\ff {{\mathbb{F}}}
\def\ker {{\operatorname{ker}}}
\def\fin\qedhere
\def\from {{\leftarrow}}
\def\ccdot {\! \cdot \!}
\def\pin{\operatorname{Pin}(2)}
\def\Cone{\mathit{Cone}}
\newcommand{\bunderline}[1]{\underline{#1\mkern-2mu}\mkern2mu }
\def\du {\bar{d}}
\def\dl {\bunderline{d}}
\def\He{\mathbb{H}}
\def\CF {\mathit{CF}}
\def\HF {\mathit{HF}}
\newcommand\HFhat{\widehat{\HF}}
\newcommand\CFhat{\widehat{\CF}}
\newcommand\HFp {\HF^+}
\newcommand \CFp {\CF^+}
\newcommand \CFm {\CF^-}
\newcommand \HFm {\HF^-}
\newcommand \CFinf {\CF^{\infty}}
\newcommand \HFinf {\HF^{\infty}}
\newcommand \CFo {\CF^{\circ}}
\newcommand \HFo {\HF^{\circ}}
\def\HIm{\mathit{HI}^-}
\def\CFI {\mathit{CFI}}
\def\HFI {\mathit{HFI}}
\newcommand\HFIp {\HFI^+}
\newcommand \CFIm {\CFI^-}
\newcommand \HFIm {\HFI^-}
\newcommand \HFIo {\HFI^{\circ}}
\def\inv{\iota}
\def\Inv{\mathfrak{I}}
\newcommand{\co}{\nobreak\mskip2mu\mathpunct{}\nonscript
  \mkern-\thinmuskip{:}\penalty300\mskip6muplus1mu\relax}
\def\swf{\mathit{SWF}}
\def\rp{\mathbb{RP}}
\def\Vert{\operatorname{Vert}}
\def\Char{\operatorname{Char}}
\def\cQ{\mathcal{Q}}
\def\tdelta{\tilde\delta}
\begin{document}

\title{Involutive Heegaard Floer homology and plumbed three-manifolds}

\author[Irving Dai]{Irving Dai}
\author[Ciprian Manolescu]{Ciprian Manolescu}
\thanks{ID was partially supported by NSF grant DGE-1148900.}
\thanks {CM was partially supported by NSF grant DMS-1402914.}

\address{Department of Mathematics, Princeton University, Princeton, NJ 08540}
\email{idai@math.princeton.edu}

\address {Department of Mathematics, UCLA, 
Los Angeles, CA 90095}
\email {cm@math.ucla.edu}

\begin{abstract}
We compute the involutive Heegaard Floer homology of the family of three-manifolds obtained by plumbings along almost-rational graphs. (This includes all Seifert fibered homology spheres.) We also study the involutive Heegaard Floer homology of connected sums of such three-manifolds, and explicitly determine the involutive correction terms in the case that all of the summands have the same orientation. Using these calculations, we give a new proof of the existence of an infinite-rank subgroup in the three-dimensional homology cobordism group. 
\end{abstract}



\maketitle

\section{Introduction}
\label{sec:introduction}
In \cite{HMinvolutive}, Hendricks and the second author defined an invariant of three-manifolds called involutive Heegaard Floer homology. This is a variation of the well-known Heegaard Floer homology of Ozsv\'ath and Szab\'o \cite{HolDisk, HolDiskTwo}, taking into account the conjugation action $\inv$ on the Heegaard Floer complex. Specifically, let $\CFo(Y, \s)$ denote any of the four flavors of the Heegaard Floer complex (with $\circ = \widehat{\phantom{a}}$, $+$, $-$, or $\infty$), for a three-manifold $Y$ with a self-conjugate $\spinc$ structure $\s$. The Heegaard Floer complex is   constructed starting from a Heegaard diagram for $Y$. The conjugation $\iota$ is induced by reversing the orientation of the Heegaard surface and swapping the alpha and beta curves, and observing that the result is also a Heegaard diagram for $Y$, which can be related to the original one by a sequence of Heegaard moves. The corresponding involutive Heegaard Floer homology $\HFIo(Y, \s)$ is the homology of the mapping cone
\begin{equation}
\label{eq:CFI}
\CFo(Y, \s) \xrightarrow{\phantom{o} Q (1+\inv) \phantom{o}} Q \ccdot \CFo(Y, \s) [-1]. 
\end{equation}
This is a module over the ring $\Ring = \ff[Q, U]/(Q^2)$, where $\ff$ denotes the field of two elements.

The construction of involutive Heegaard Floer homology was inspired by developments in gauge theory \cite{Triangulations, Lin, Stoffregen}. Indeed, in the case when $Y$ is a rational homology sphere, $\HFIp(Y, \s)$ is conjectured to be isomorphic to the $\Z_4$-equivariant Seiberg-Witten Floer homology of the pair $(Y, \s)$. Just as Heegaard Floer homology is more amenable to computations than Seiberg-Witten theory, involutive Heegaard Floer homology should be easier to calculate than its gauge-theoretic counterpart. So far, $\HFIo$ was computed in \cite{HMinvolutive} for large surgeries on $L$-space and thin knots (including many hyperbolic examples); see also \cite{BorodzikHom} for related applications. Moreover, a formula for $\HFIo$ of connected sums was established in \cite{HMZ}.

The purpose of this paper is to compute $\HFIo$ for the class of rational homology spheres introduced by N{\'e}methi in \cite{NemethiOS}, namely those associated to plumbings along AR (almost-rational) graphs. For the sake of the exposition, we will focus on the minus version, $\HFIm$.

If $G$ is a weighted graph, let us denote by $Y(G)$ the boundary of the corresponding plumbing of two-spheres. In \cite{Plumbed},  Ozsv\'ath and Szab\'o calculated the  Heegaard Floer homology of $Y(G)$ when $G$ is a negative definite graph with at most one bad vertex. (In particular, if a rational homology sphere $Y$ is Seifert fibered over a base orbifold with underlying space $S^2$, then $Y$ can be expressed as $Y(G)$ for such a graph $G$.) In \cite{NemethiOS}, N{\'e}methi defined AR graphs as those obtained from a rational graph by increasing the decoration of at most one vertex. We will call the resulting manifolds {\em AR plumbed three-manifolds}, for simplicity. They include those considered by Ozsv\'ath and Szab\'o in \cite{Plumbed}, as well as the links of rational and weakly elliptic singularities. Further, extending the work of \cite{Plumbed}, N{\'e}methi computed the Heegaard Floer homology of AR plumbed three-manifolds. Specifically, to each plumbing graph $G$ and characteristic element $k$ one can associate a decorated infinite tree $R_k$, called a graded root. To $R_k$ one can associate, in a combinatorial fashion, a lattice homology group $\He^-(G, k)=\He^-(R_k)$. N{\'e}methi's result, phrased in terms of the minus rather than the plus flavor, is that
$$ \HFm(Y(G), [k]) \cong \He^-(R_k)[\sigma+2].$$
Here, $[k]$ denotes the $\spinc$ stucture on $Y(G)$ associated to $k$, and $\sigma + 2$ denotes a  grading shift,\footnote{In this paper, by $[a]$ we will denote a grading shift by $a$, so that an element in degree $0$ in a module $M$ becomes an element in degree $-a$ in the shifted module $M[a].$ This is the standard convention, but opposite to the one in \cite{NemethiOS}.} where $\sigma=-({|G| + k^2})/{4}$.

To understand the involutive Heegaard Floer homology, we focus on self-conjugate $[k]$. In that case, there is a natural involution $J_0$ on $\He^-(R_k)$ (and hence on its grading-shifted counterpart), coming from reflecting the graded root along its vertical axis. 

\begin{theorem}
\label{thm:main}
Let $Y=Y(G)$ be the plumbed three-manifold associated to an AR graph $G$, and oriented as the boundary of the plumbing. Let $\s=[k]$ be a self-conjugate $\spinc$ structure on $Y$. Let also $R_k$ be the graded root associated to $(G, k)$, and let $J_0$ be the reflection involution on the lattice homology $\He^-(R_k)[\sigma + 2]$. 

Then, we have an isomorphism of graded $\ff[U]$-modules
\[
\HFIm(Y, \s) \cong \ker (1 + J_0)[-1] \oplus \coker (1 + J_0).
\]
Under this isomorphism, the action of $Q$ on $\HFIm(Y, \s)$ is given by the quotient map
\[
\ker (1 + J_0) \rightarrow \ker (1 + J_0)/\im (1 + J_0) \subseteq \coker (1 + J_0).
\]
\end{theorem}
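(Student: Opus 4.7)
The plan is to reduce the computation of $\HFIm(Y,\s)$ to an essentially algebraic problem on the graded root, by first identifying the $\iota$-complex structure of $\CFm(Y,\s)$ with its lattice-theoretic counterpart.

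The first step is to upgrade N\'emethi's isomorphism $\HFm(Y(G),\s) \cong \He^-(R_k)[\sigma+2]$ to an identification of $\iota$-complexes. Concretely, I would build a chain-level model $(C(R_k), J_0)$ for the lattice cohomology of the graded root, equipped with its reflection involution, and construct a homotopy equivalence $\CFm(Y,\s) \to C(R_k)$ that intertwines $\iota$ with $J_0$ up to chain homotopy. The key observation on the lattice side is that $\spinc$ conjugation acts on characteristic vectors by $k \mapsto -k$; when $\s=[k]$ is self-conjugate, this is an involution on the characteristic vectors representing $\s$, and on passing to $R_k$ it is precisely the reflection $J_0$. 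On the Heegaard Floer side, the conjugation symmetry $\iota$ should match this combinatorial involution by naturality: N\'emethi's argument proceeds via a sequence of surgery exact triangles reducing an AR graph to a rational one, and both $\iota$ (by \cite{HMinvolutive}, functoriality) and the reflection $k \mapsto -k$ are compatible with these triangles, so an inductive argument should propagate the identification from the rational (hence $L$-space) case, where the involution is determined by the grading, up to the AR case.

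The second step, assuming the $\iota$-complex identification from Step 1, is to compute the homology of the mapping cone of $Q(1+J_0)$ on $C(R_k)$. From the short exact sequence
\[
0 \to Q\cdot C(R_k)[-1] \to \Cone\bigl(Q(1+J_0)\bigr) \to C(R_k) \to 0,
\]
one reads off a long exact sequence whose connecting map is essentially $1+J_0$. Its kernel and image give
\[
\HFIm(Y,\s) \cong \ker(1+J_0)[-1] \oplus \coker(1+J_0)
\]
as graded $\ff[U]$-modules. A direct inspection of the cone differential shows that $Q$ acts by sending a class in $\ker(1+J_0)[-1]$ to its image in $\coker(1+J_0)$, which is exactly the $Q$-action described in the statement.

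The main obstacle is Step 1: promoting N\'emethi's isomorphism to a statement about $\iota$-complexes. Since $\iota$ is only well-defined up to homotopy, one must work with a sufficiently rigid chain-level model on both sides and verify that the induction through surgery triangles produces a chain map commuting with the two involutions up to a controlled homotopy. Once this identification is in place, Step 2 is essentially formal, as is the splitting of the resulting extension (both summands being direct sums of $\ff[U]$-towers and finite-dimensional pieces).
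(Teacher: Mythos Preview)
Your Step~2 is essentially correct and the paper acknowledges this route in a remark: the long exact sequence of the cone, with connecting map $1+J_0$, yields a short exact sequence $0 \to \coker(1+J_0) \to \HFIm \to \ker(1+J_0)[-1] \to 0$, and since $\He^-(R_k)[\sigma+2]$ is supported in a single $\Z/2$-parity, the two summands lie in disjoint gradings and the sequence splits automatically. The identification of $Q$ with the quotient map is then read off directly.

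Your Step~1, however, proposes a route (induction through surgery triangles, tracking $\iota$ at the chain level from the $L$-space base case) that is both different from and substantially harder than what the paper does, and you yourself flag it as the main obstacle. The paper sidesteps any such induction via two observations. First, the identification of $\iota_*$ with $J_0$ on \emph{homology} is done directly, not inductively: the lattice isomorphism is constructed by sending $x \in \HFp(-Y,\s)$ to the function $k \mapsto F^+_{W,k}(x)$, and the conjugation symmetry $F^+_{W,k} = F^+_{W,-k} \circ \iota_*$ of cobordism maps (Ozsv\'ath--Szab\'o) immediately shows that $\iota_*$ corresponds to precomposition with $k \mapsto -k$, i.e.\ to $J_0$. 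Second---and this is the idea you are missing---the paper introduces an explicit free model $C_*(R)$ for $\He^-(R)$ (generators indexed by leaves and by ``angles'', with $\partial$ injective on the odd part) and proves a rigidity lemma: any two chain maps $C_*(R)\to C_*(R)$ inducing the same map on homology are chain homotopic. Thus one pulls back $\iota$ along an arbitrary homotopy equivalence $\CFm(Y,\s)\simeq C_*(R)$ (which exists since both are free over the PID $\ff[U]$ with the same homology), obtains a map on $C_*(R)$ inducing $J_0$ on homology, and concludes it is homotopic to the explicit reflection. This completely eliminates the need to control $\iota$ through any surgical or inductive construction, and is what makes the argument short.
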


The main sources of applications for involutive Heegaard Floer homology are the involutive correction terms $\dl(Y, \s)$ and $\du(Y, \s)$, the analogues of the Ozsv\'ath-Szab\'o correction term $d(Y,\s)$ defined in \cite{AbsGraded}. In \cite{HMinvolutive}, it is shown that $\dl$ and $\du$ can be used to constrain the intersection forms of spin four-manifolds with boundary, and the existence of homology cobordisms between three-manifolds. 

For the class of plumbed three-manifolds studied in this paper, we have the following result.
\begin{theorem}
\label{thm:ds}
Let $Y$ and $\s$ be as in Theorem~\ref{thm:main}. Then, the involutive Heegaard Floer correction terms are given by 
$$\dl(Y, \s) = - 2\bar{\mu}(Y, \s), \ \ \du(Y, \s) = d(Y, \s),$$ where $d(Y, \s)$ is the Ozsv\'ath-Szab\'o $d$-invariant and $\bar{\mu}(Y, \s)$ is the Neumann-Siebenmann invariant from \cite{Neu}, \cite{Sieb}.
\end{theorem}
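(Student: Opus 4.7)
The plan is to combine Theorem~\ref{thm:main} with a combinatorial analysis of the graded root $R_k$. For a rational homology sphere with self-conjugate $\spinc$ structure, $\HFIm(Y,\s)$ has, as an $\ff[U]$-module, exactly two infinite $U$-towers, whose top gradings determine $\du(Y,\s)$ and $\dl(Y,\s)$. By Theorem~\ref{thm:main} these towers live in $\coker(1+J_0)$ and $\ker(1+J_0)[-1]$ respectively, so the task is to identify the top of each tower combinatorially.

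For $\du(Y,\s) = d(Y,\s)$: the $\coker$-tower is inherited from the unique infinite $U$-tower in $\He^-(R_k)[\sigma+2] \cong \HFm(Y,\s)$, whose top sits at grading $d(Y,\s)-2$. The top-grading vertex of $R_k$ is not killed in the cokernel: an axis-fixed top vertex is never in $\im(1+J_0)$, while a pair of top vertices related by $J_0$ is identified but not annihilated. Hence the top of the $\coker$-tower is at $d(Y,\s)-2$, and applying the standard $+2$ convention relating the top of a minus-flavor $U$-tower to the correction term gives $\du(Y,\s) = d(Y,\s)$.

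For $\dl(Y,\s) = -2\bar\mu(Y,\s)$: the key combinatorial step is that an element $x = \sum c_v v \in \He^-(R_k)$ has non-trivial infinite $U$-tower component if and only if $\sum_v c_v \equiv 1 \pmod 2$, since every vertex of $R_k$ has a unique $U$-chain of descendants that eventually enters the single infinite stem (the axis of reflection). Imposing $J_0$-invariance then shows that the top of the $\ker(1+J_0)$-tower equals the maximal grading of a $J_0$-fixed vertex of $R_k$. To finish, I would invoke the combinatorial formula
\[
\bar\mu(Y,\s) = \frac{\sigma - \chi^{\mathrm{axis}}_{\max}}{2},
\]
where $\chi^{\mathrm{axis}}_{\max}$ is the maximum of the Riemann--Roch-type weight function $\chi$ over characteristic vectors fixed by the natural involution of the plumbing lattice; under N\'emethi's construction these symmetric characteristic vectors correspond to the $J_0$-fixed vertices of $R_k$. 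Combined with the grading shift $[\sigma+2]$ and the conventions above, this yields $\dl(Y,\s) = -2\bar\mu(Y,\s)$.

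The main obstacle is establishing this combinatorial formula for $\bar\mu$, which is the analogue, for the symmetric sub-lattice of characteristic vectors, of the Ozsv\'ath--Szab\'o formula expressing $d(Y,\s)$ as a maximum of $\chi$ over all characteristic vectors. Since $\bar\mu$ is defined topologically, via the signature of an equivariantly bounding spin four-manifold, proving this formula amounts to realizing the maximal-$\chi$ symmetric characteristic vector as the Wu class of such a bounding manifold and computing its signature.
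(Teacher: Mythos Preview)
Your overall architecture coincides with the paper's: use Theorem~\ref{thm:main} to split $\HFIm$ as $\ker(1+J_0)[-1]\oplus\coker(1+J_0)$, read off $\du$ from the top of the $\coker$-tower (which agrees with the top of $\He^-(R)$, hence gives $d(Y,\s)$), and read off $\dl$ from the top of the infinite stem inside $\ker(1+J_0)$, which you correctly identify as the maximal grading of a $J_0$-invariant vertex of the shifted root. Up to this point your argument and the paper's are the same.

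The divergence is in the last step. The paper does not rederive a combinatorial formula for $\bar\mu$; it simply quotes \cite[Theorem~3.2]{Dai}, which already proves that the grading $g$ of the uppermost $J_0$-invariant vertex of $R_k[\sigma+2]$ satisfies $g+2=-2\bar\mu(Y,\s)$. So the ``main obstacle'' you flag is, in the paper, discharged by citation. Your outline for proving it directly is in the right spirit but is incomplete and slightly imprecise: the vertices of $R_k$ are connected components of sublevel sets $S_n\subset L$, not individual lattice points, so a $J_0$-invariant vertex need not a priori correspond to a single ``symmetric characteristic vector''. What \cite{Dai} actually does is single out the Wu vector $w$ (the unique characteristic vector with $0/1$ coordinates), show that the lattice point associated to it sits in a $J_0$-invariant component and realizes the extremal value of $\chi_k$ among such components, and then compare with the definition $\bar\mu=(\mathrm{sign}(G)-w^2)/8$. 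If you want to make your proof self-contained, that is the statement you need; your proposed formula $\bar\mu=(\sigma-\chi^{\mathrm{axis}}_{\max})/2$ should be replaced by this Wu-vector identification rather than an analogue of the Ozsv\'ath--Szab\'o $d$-formula over a ``symmetric sub-lattice''.
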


Theorems~\ref{thm:main} and \ref{thm:ds} should be compared with the corresponding results for $\pin$-monopole Floer homology, obtained by the first author in \cite{Dai}. For the smaller class of  rational homology spheres Seifert fibered over a base orbifold with underlying space $S^2$, the $\pin$-equivariant Seiberg-Witten Floer homology was computed by Stoffregen in \cite{Stoffregen}.

The proof of Theorem~\ref{thm:main} is in two steps. First, we identify the action of $J_0$ on $\He^-(R_k)$ (or, more precisely, $\He^-(R_k)[\sigma+2])$ with the conjugation involution on $\HFm(Y, \s)$, using an argument from \cite{Dai}. Second, we prove that, in the case at hand, the action of $J_0$ on $\He^-(R_k)$ determines (up to chain homotopy) the underlying action at the chain level, on any free chain complex that computes $\He^-(R_k)$. This allows us not only to compute the involutive Heegaard Floer homology, but also to characterize the pair $(\CFm(Y, \s), \inv)$ up to the notion of equivalence considered in \cite[Definition 8.3]{HMZ}. 

The equivalence class of $(\CFm, \inv)$ is the input needed for the connected sum formula in involutive Heegaard Floer homology proved in \cite{HMZ}. Thus, we can use that formula to calculate $\HFIm$ for connected sums of AR plumbed three-manifolds. In this paper we will focus on explicitly calculating the involutive correction terms for such manifolds. To compute the correction terms we only need to understand the pair $(\CFm, \inv)$ up to a weaker equivalence relation, called local equivalence; cf. \cite[Definition 8.5]{HMZ}. 

As we prove in Sections~\ref{sec:monotone} and \ref{sec:dssums}, for an AR plumbed manifold $Y$ with self-conjugate $\spinc$ structure $\s$, the local equivalence class of $(\CFm(Y, \s), \inv)$ is characterized by $2n$ quantities 
$$ d_1(Y, \s) > d_2(Y, \s) > \dots > d_n(Y, \s), $$
$$ \bar \mu_1(Y, \s) > \bar \mu_2(Y, \s) > \dots > \bar \mu_n(Y, \s),$$
where $d_1=d$ is the Ozsv\'ath-Szab\'o correction term and $\bar \mu_n = \bar \mu$ is the Neumann-Siebenmann invariant. The values of $d_i, \bar \mu_i$ can be read from the graded root $R_k$ associated to $(Y, \s)$. Specifically, these values describe a simplified graded root, called {\em monotone}, which is obtained from $R_k$ by deleting some branches according to a certain algorithm. 

We refer to the integer $n$ above (the number of quantities $d_i$) as the {\em local complexity} of $(Y, \s)$. In particular, having local complexity one is the same as the {\em projective type} condition introduced by Stoffregen in \cite[Section 5.2]{Stoffregen}.

It will be convenient to also define
\[
2\tdelta_i(Y, \s) = d_i(Y, \s) + 2\bar \mu_i(Y, \s), \ \ \ 2\tdelta(Y, \s) = d(Y, \s) + 2\bar \mu(Y, \s).
\]

\begin{theorem}\label{thm:dssums}
Let $Y_1, \dots, Y_k$ be AR plumbed three-manifolds, oriented as the boundaries of those plumbings, and equipped with self-conjugate $\spinc$ structures $\s_1, \dots, \s_k$. Suppose that $(Y_i, \s_i)$ has local complexity $n_i$. 

$(a)$ We have
\[
\du(Y_1 \# \dots \# Y_k, \s_1 \# \dots \# \s_k) = d(Y_1 \# \dots \# Y_k, \s_1 \# \dots \# \s_k) = \sum_{i=1}^k d(Y_i, \s_i).
\]

$(b)$ For each tuple of integers $(s_1, \ldots, s_k)$ with $1 \leq s_i \leq n_i$, define
\[
N(s_1, \ldots, s_k) = \left( \sum_{i = 1}^k d_{s_i}(Y_i, \s_i) \right)- \max_i \left( 2\tdelta_{s_i}(Y_i, \s_i) \right).
\]
Then
\[
\dl(Y_1 \# \dots \# Y_k, \s_1 \# \dots \# \s_k) = \max_{(s_1, \ldots, s_k)} N(s_1, \ldots, s_k).
\]
where the maximum is taken over all tuples $(s_1, \ldots, s_k)$ with $1 \leq s_i \leq n_i$.
\end{theorem}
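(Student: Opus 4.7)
The plan is to reduce to standard monotone model complexes for each summand using local equivalence, apply the connected sum formula of \cite{HMZ}, and then determine the correction terms by an explicit chain-level analysis.

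First, the involutive correction terms $\dl$ and $\du$ depend only on the local equivalence class of the pair $(\CFm(Y,\s), \inv)$, by \cite[Section 8]{HMZ}. By the classification developed in Sections \ref{sec:monotone} and \ref{sec:dssums}, the local equivalence class for an AR plumbed three-manifold $(Y_i, \s_i)$ is captured by the $2 n_i$ invariants $\{d_j(Y_i, \s_i), \bar{\mu}_j(Y_i, \s_i)\}_{j=1}^{n_i}$ extracted from its monotone simplified graded root. I may therefore replace each pair $(\CFm(Y_i, \s_i), \inv)$ by the concrete monotone model determined by these invariants. The connected sum formula of \cite{HMZ} then identifies $(\CFm(\#_i Y_i, \#_i \s_i), \iota_\#)$, up to local equivalence, with an explicit twisted tensor product of the monotone models, in which $\iota_\#$ is built out of $\iota_1, \ldots, \iota_k$ together with a homotopy.

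For part (a), additivity of the Ozsv\'ath--Szab\'o invariant $d$ under connected sum is classical. To see that $\du$ of the connected sum coincides with this sum, I would produce an $\iota_\#$-invariant cycle (modulo lower-grading terms) at the top of the infinite $U$-tower in the tensor complex. In each monotone model, since $\du(Y_i, \s_i) = d(Y_i, \s_i)$ by Theorem \ref{thm:ds}, there is a top-tower generator $\theta_i$ at grading $d(Y_i, \s_i)$ on which $(1 + \iota_i)$ vanishes modulo strictly lower-grading terms. The tensor $\theta_1 \otimes \cdots \otimes \theta_k$ is then $\iota_\#$-invariant modulo lower-grading terms at grading $\sum_i d(Y_i, \s_i)$, and so generates the $Q$-invariant $U$-tower in $\HFIm(\#_i Y_i)$ that realizes $\du$.

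For part (b), the correction term $\dl$ is characterized as the top grading (shifted appropriately via the mapping cone \eqref{eq:CFI}) of an element $x \in \CFm(\#_i Y_i)$ whose image under $Q(1 + \iota_\#)$ lies in the infinite $U$-tower of the target. In each monotone model, the $s_i$-th tower generator $\theta_{s_i}$ at grading $d_{s_i}(Y_i, \s_i)$ has the property that $(1+\iota_i)\theta_{s_i}$ represents a tower class whose grading-shift from $d_{s_i}$ is controlled by $2\tdelta_{s_i}(Y_i, \s_i)$. Via the tensor-product formula for $\iota_\#$, applying $(1+\iota_\#)$ to $\theta_{s_1} \otimes \cdots \otimes \theta_{s_k}$ produces a sum whose leading contribution comes from the single summand whose $2\tdelta_{s_i}$ is largest, reflecting the fact that the homotopy formula in \cite{HMZ} acts by $(1+\iota_i)$ on one factor at a time. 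This yields a tower element at grading $N(s_1, \ldots, s_k)$; maximizing over tuples gives the claimed formula.

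The main obstacle will be a careful chain-level bookkeeping: verifying that (i) the candidate tensors $\theta_{s_1} \otimes \cdots \otimes \theta_{s_k}$ realize the values $N(s_1, \ldots, s_k)$ precisely, and (ii) no element of the tensor complex lying outside this family produces a higher value of $\dl$. I anticipate handling (ii) by filtering the tensor complex according to the monotone decomposition and inducting on the number of summands $k$, using that on each $U$-tower factor the action of $\iota_i$ is essentially determined, up to chain homotopy, by the invariants $\{d_j(Y_i, \s_i), \bar{\mu}_j(Y_i, \s_i)\}$.
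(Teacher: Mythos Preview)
Your overall strategy---reduce to monotone model complexes via local equivalence (Theorems~\ref{thm:standard} and \ref{thm:monotonesubroot}), invoke the connected sum formula (Theorem~\ref{thm:ConnSum}), and then analyze the tensor product---matches the paper exactly. The divergence is entirely in how the tensor product is analyzed, and here your proposal differs substantially from the paper and has some gaps.

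First, a correction to your setup: Theorem~\ref{thm:ConnSum} already gives that $(\CFm(\#_i Y_i), \inv)$ is \emph{equivalent} (not just locally equivalent) to the grading-shifted tensor product with involution $\inv_1 \otimes \cdots \otimes \inv_k$. There is no ``twisted'' tensor and no homotopy correction at this level; the homotopy in \cite{HMZ} is absorbed into the equivalence. So you should simply work with $J_0 \otimes \cdots \otimes J_0$ on $C_*(M_1) \otimes \cdots \otimes C_*(M_k)$. Relatedly, your description of $\dl$ as ``the top grading of an element $x$ whose image under $Q(1+\iota_\#)$ lies in the infinite $U$-tower'' is not the correct characterization: one needs a cycle $x$ together with a chain $y$ such that $(1+\iota)x = \partial y$, with $U^n x$ nonzero for all $n$; the grading of the resulting class in $\HFIm$ is $\gr(x)+1$.

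For the actual computation, the paper does not argue algebraically via $(1+\iota_i)$ on individual factors. Instead it recasts the tensor complex geometrically (Section~\ref{sec:tensor}): for each grading $g$ it builds a $k$-dimensional cubical ``sublevel set'' $S_g$ whose $0$-cells are the degree-$g$ pure tensors of leaf generators and whose higher cells encode the angle generators. In this model, producing a class witnessing $\dl$ amounts to finding a $0$-cell $x$ in $S_g$ together with a $1$-chain $y$ (a path) with $\partial y = x + J_0 x$. The lower bound for $\dl$ comes from an explicit ``rectangular'' path from $v^1_{s_1}\otimes\cdots\otimes v^k_{s_k}$ to its reflection, whose worst edge has grading exactly $B(s_1,\ldots,s_k)+1$; this is essentially your candidate $\theta_{s_1}\otimes\cdots\otimes\theta_{s_k}$. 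The upper bound---your obstacle (ii)---is handled not by induction on $k$ or a filtration, but by a direct combinatorial argument: any path from $x$ to $J_0 x$ in $S_g$ is enclosed in a minimal symmetric box $I(s_1,\ldots,s_k)$, and by minimality the path must contain, for each coordinate $i$, an edge whose $i$th factor is the outermost angle $\alpha^i_{s_i}$; monotonicity of the roots then forces $g \le B(s_1,\ldots,s_k)$. A short reduction handles the case where $x$ is a sum of several points and $y$ is not a simple path. Your proposed inductive filtration argument is plausible in spirit but would need to reproduce this geometric constraint, and as written it is not clear how it would rule out more complicated $1$-chains $y$.
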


Part (b) of Theorem~\ref{thm:dssums} has the following consequence.
\begin{corollary}
\label{cor:dssums}
Let $(Y_i, \s_i)$ be as in Theorem~\ref{thm:dssums}. Without loss of generality, assume that 
$$\tdelta(Y_1, \s_1) \leq \dots \leq \tdelta(Y_k, \s_k).$$ Then:
\begin{equation}
\label{eq:dlsumsinequality}
\dl(Y_1 \# \dots \# Y_k, \s_1 \# \dots \# \s_k) \leq\Bigl( \sum_{i=1}^{k-1} d(Y_i, \s_i)\Bigr) - 2\bar \mu(Y_k, \s_k) = 2 \Bigl( \sum_{i=1}^{k-1} \tdelta_i -  \sum_{i=1}^k \bar \mu(Y_i, \s_i) \Bigr).
\end{equation}

Moreover, if $(Y_i, \s_i)$ are all of projective type, then we have equality in \eqref{eq:dlsumsinequality}.
\end{corollary}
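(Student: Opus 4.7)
The plan is to derive both the inequality and the equality statement by direct algebraic manipulation of the formula in Theorem~\ref{thm:dssums}(b). The only extra input needed beyond that theorem is the monotonicity of the three sequences $d_i(Y, \s)$, $\bar\mu_i(Y, \s)$, and $\tdelta_i(Y, \s)$ in $i$; by construction (cf.\ Sections~\ref{sec:monotone} and \ref{sec:dssums}) all three are strictly decreasing. In particular, for every admissible choice $s_i \in \{1, \ldots, n_i\}$ one has the pointwise bounds
\[
d_{s_i}(Y_i, \s_i) \leq d(Y_i, \s_i), \qquad \tdelta_{s_i}(Y_i, \s_i) \geq \tdelta(Y_i, \s_i).
\]

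For the inequality I would fix an arbitrary tuple $(s_1, \ldots, s_k)$ and estimate
\[
N(s_1, \ldots, s_k) \leq \sum_{i=1}^k d(Y_i, \s_i) - 2\tdelta_{s_k}(Y_k, \s_k) \leq \sum_{i=1}^k d(Y_i, \s_i) - 2\tdelta(Y_k, \s_k),
\]
where the first step replaces each $d_{s_i}$ by $d(Y_i, \s_i)$ and restricts the $\max_j$ to $j = k$, and the second step uses monotonicity of $\tdelta_i$. Expanding $2\tdelta(Y_k, \s_k) = d(Y_k, \s_k) + 2\bar\mu(Y_k, \s_k)$ produces precisely the right-hand side of \eqref{eq:dlsumsinequality}. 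Since this bound holds uniformly in the tuple, taking the max over all tuples yields the desired inequality.

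For the equality statement in the projective-type case, all $n_i = 1$ (projective type being the same as local complexity one), so the only admissible tuple is $(1, \ldots, 1)$ and
\[
\dl(Y_1 \# \dots \# Y_k, \s_1 \# \dots \# \s_k) = N(1, \ldots, 1) = \sum_{i=1}^k d(Y_i, \s_i) - \max_j 2\tdelta(Y_j, \s_j).
\]
The assumption $\tdelta(Y_1, \s_1) \leq \cdots \leq \tdelta(Y_k, \s_k)$ identifies the maximum as $2\tdelta(Y_k, \s_k)$, so the right-hand side collapses to $\sum_{i=1}^{k-1} d(Y_i, \s_i) - 2\bar\mu(Y_k, \s_k)$, matching the right-hand side of \eqref{eq:dlsumsinequality}.

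The argument is purely algebraic once Theorem~\ref{thm:dssums} is available, so no real obstacle is expected. The only subtle point is keeping track of the opposite roles played by the two monotonicities: $d_i$ decreasing bounds $d_{s_i}$ from above, while $\tdelta_i$ decreasing bounds $\tdelta_{s_i}$ from below, which in turn bounds $-2\max_j \tdelta_{s_j}(Y_j, \s_j)$ from above.
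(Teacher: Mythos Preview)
Your overall plan is sound and close to the paper's, but the second step of your inequality chain contains a genuine error. You claim the pointwise bound $\tdelta_{s_i}(Y_i,\s_i)\geq\tdelta(Y_i,\s_i)$ and justify it by ``monotonicity of $\tdelta_i$''. Monotonicity of $\tdelta_i$ only gives $\tdelta_{s_k}(Y_k,\s_k)\geq\tdelta_{n_k}(Y_k,\s_k)$, and this last quantity is \emph{not} $\tdelta(Y_k,\s_k)$. By definition $2\tdelta=d+2\bar\mu=d_1+2\bar\mu_n$, which mixes the extreme indices and is not any of the $\tdelta_i$ unless $n=1$. Concretely, for a monotone root with $h_1=10$, $r_1=-10$, $h_2=0$, $r_2=0$ one has $2\tdelta_2=0$ but $2\tdelta=10$, so your second inequality would assert $0\leq -10$.

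The repair is exactly what the paper does: do \emph{not} replace $d_{s_k}$ by $d(Y_k,\s_k)$ in the first step. Restricting the maximum to $j=k$ gives
\[
N(s_1,\dots,s_k)\ \leq\ \sum_{i=1}^k d_{s_i}(Y_i,\s_i)-2\tdelta_{s_k}(Y_k,\s_k)\ =\ \sum_{i=1}^{k-1} d_{s_i}(Y_i,\s_i)-2\bar\mu_{s_k}(Y_k,\s_k),
\]
since $2\tdelta_{s_k}=d_{s_k}+2\bar\mu_{s_k}$ cancels the $k$th summand. Now the monotonicity you actually need is $d_{s_i}\leq d$ for $i<k$ and $\bar\mu_{s_k}\geq\bar\mu$, both of which are valid. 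Your treatment of the projective-type case is correct and matches the paper.
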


In \cite[Theorem 1.4]{Stoffregen2}, Stoffregen calculated the homology cobordism invariants $\alpha, \beta, \gamma$ (coming from $\pin$-equivariant Seiberg-Witten Floer homology, cf. \cite{Triangulations}) in the case of connected sums of Seifert fibered integral homology spheres of projective type. Corollary~\ref{cor:dssums} implies that $\dl = 2\beta$ or $2\beta-2$ for those manifolds.

In a different direction, we can specialize Theorem~\ref{thm:dssums} to the case of self-connected sums $\#_k(Y, \s)$. In that case, the expression for $\dl$ simplifies significantly. Indeed, the following corollary shows that $\dl$ satisfies a curious ``stabilization" property for self-connected sums of AR manifolds. More precisely, it turns out that for $k$ sufficiently large, $\dl(\#_k (Y, \s))$ is a linear function of $k$:

\begin{corollary}
\label{cor:stab}
Let $Y$ be an AR plumbed three-manifold and let $\s$ be a self-conjugate $\spinc$ structure on $Y$. Then
\[
\dl(\#_k(Y, \s)) = \max_i \left( k \cdot d_i(Y, \s) - 2\tdelta_i(Y, \s) \right).
\]
Moreover, for all $k$ sufficiently large, this is equal to $k \cdot d(Y, s) - 2\tdelta_1(Y, s)$.
\end{corollary}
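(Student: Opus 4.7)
The plan is to apply Theorem~\ref{thm:dssums}(b) directly with $Y_1 = \dots = Y_k = Y$ and $\s_1 = \dots = \s_k = \s$, which will reduce the claim to a combinatorial optimization over tuples $(s_1, \ldots, s_k) \in \{1, \ldots, n\}^k$, where $n$ is the local complexity of $(Y, \s)$. The only extra ingredient I need is the monotonicity of $\tdelta_i$: because $d_1 > \dots > d_n$ and $\bar\mu_1 > \dots > \bar\mu_n$ are each strictly decreasing, so is $\tdelta_i = (d_i + 2\bar\mu_i)/2$. This monotonicity causes the sum and the max appearing in the definition of $N(s_1, \ldots, s_k)$ to pull in opposite directions, so that they are jointly optimized by constant tuples.

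For the first formula, the lower bound $\dl(\#_k(Y,\s)) \geq \max_i[k \cdot d_i(Y,\s) - 2\tdelta_i(Y,\s)]$ will follow by plugging the constant tuple $(i, i, \ldots, i)$ into $N$. For the matching upper bound, given any tuple $(s_1, \ldots, s_k)$, I set $\ell = \min_j s_j$. The decreasing nature of $\tdelta$ gives $\max_j \tdelta_{s_j} = \tdelta_\ell$, while the decreasing nature of $d$ gives $d_{s_j} \leq d_\ell$ for each $j$, so
\[
N(s_1, \ldots, s_k) \;=\; \sum_{j=1}^k d_{s_j}(Y,\s) - 2\tdelta_\ell(Y,\s) \;\leq\; k \cdot d_\ell(Y,\s) - 2\tdelta_\ell(Y,\s).
\]
Taking the maximum over all tuples and invoking Theorem~\ref{thm:dssums}(b) completes this step.

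For the stabilization statement, I will compare the $i = 1$ contribution with each $i > 1$ contribution directly:
\[
(k \cdot d_1 - 2\tdelta_1) - (k \cdot d_i - 2\tdelta_i) \;=\; k(d_1 - d_i) - 2(\tdelta_1 - \tdelta_i).
\]
Since $d_1 > d_i$ for every $i > 1$, this is a linear function of $k$ with strictly positive slope, hence eventually positive for each fixed $i$; because $n$ is finite, a single threshold $k_0(Y,\s)$ works uniformly in $i$. For $k \geq k_0$ the maximum is then realized at $i = 1$, giving $\dl(\#_k(Y,\s)) = k \cdot d_1(Y,\s) - 2\tdelta_1(Y,\s) = k \cdot d(Y,\s) - 2\tdelta_1(Y,\s)$, as claimed. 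Overall, this corollary is essentially a clean algebraic specialization of Theorem~\ref{thm:dssums}(b), and there is no serious obstacle once that theorem and the monotonicity of the $\tdelta_i$ sequence are in hand.
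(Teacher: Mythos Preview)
Your proof is correct and follows essentially the same approach as the paper: both reduce to homogeneous tuples by taking $\ell=\min_j s_j$, bounding the max term by $2\tdelta_\ell$ and each $d_{s_j}$ by $d_\ell$, then argue stabilization from $d_1>d_i$ and finiteness of $n$. Your explicit use of the strict monotonicity of $\tdelta_i$ to get equality $\max_j \tdelta_{s_j}=\tdelta_\ell$ is slightly sharper than what the paper writes (it only uses the trivial inequality $\max_i \tdelta_{s_i}\geq \tdelta_{s_j}$), but this is a cosmetic difference.
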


Corollary~\ref{cor:stab} should be compared with Theorem 1.3 of \cite{Stoffregen2}, which shows that for $\alpha$, $\beta$, and $\gamma$, the difference $\beta(\#_k(Y, \s)) - k\cdot d(Y, \s)$ (for example) is a bounded function of $k$. Here, in the case of AR manifolds, we have the stronger result that for $\du$ and $\dl$, the corresponding expressions are eventually constant (namely, identically zero for $\du$, and  $-2\tdelta_1(Y, \s)$ for $\dl$).

We can also study $\HFIm$ for connected sums of AR plumbed three-manifolds, when some of these manifolds are equipped with the opposite orientation. Recall that in \cite{HMZ} such a connected sum (of large surgeries on torus knots) was used to give an example of a $\Z_2$-homology sphere with $\dl, d$ and $\du$ all different. Using the methods of this paper, we can now find an integral homology sphere with the same property.

\begin{theorem}
\label{thm:AllDifferent}
Consider the integral homology $3$-sphere
$$ Y = S^3_{-1}(T_{2,7}) \# S^3_{-1}(T_{2,7}) \# S^3_{1}(-T_{2,11})
= \Sigma(2,7,15) \# \Sigma(2,7,15) \# -\Sigma(2,11,23).$$
Then, we have
$$ \dl(Y) = -2, \ \ d(Y) = 0,\ \ \du(Y)=2.$$
Consequently, $Y$ is not homology cobordant to any AR plumbed three-manifold.
\end{theorem}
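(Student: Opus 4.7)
The plan is to compute the local equivalence class of $(\CFm(Y), \inv)$ via the connected sum formula of \cite{HMZ}, and then extract the correction terms $\dl$ and $\du$ from it.

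First, we recognize each summand as a Brieskorn sphere: $S^3_{-1}(T_{2,7}) \cong \Sigma(2,7,15)$ and $S^3_{1}(-T_{2,11}) \cong -\Sigma(2,11,23)$. With their standard orientations as boundaries of the canonical negative-definite Seifert plumbings, both $\Sigma(2,7,15)$ and $\Sigma(2,11,23)$ are AR plumbed three-manifolds carrying their unique self-conjugate $\spinc$ structures, so Theorem~\ref{thm:main} applies. We compute the graded roots $R_k$ for these two manifolds via N\'emethi's algorithm, together with the reflection involution $J_0$ on each, and extract their monotone simplifications. By Theorem~\ref{thm:main} and the discussion in Sections~\ref{sec:monotone}--\ref{sec:dssums}, these simplifications encode the local equivalence class of $(\CFm, \inv)$ through the tuples $(d_i, \bar\mu_i)$.

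Second, to incorporate the orientation-reversed summand $-\Sigma(2,11,23)$, we dualize the local equivalence class obtained for $\Sigma(2,11,23)$; at the level of graded roots this amounts to a reflection across the degree axis, which interchanges the roles of $\ker(1 + J_0)$ and $\coker(1 + J_0)$. We then feed the three resulting local-equivalence representatives---two identical copies for $\Sigma(2,7,15)$ and one dualized copy for $-\Sigma(2,11,23)$---into the connected sum formula of \cite{HMZ}. The output is a chain model for $(\CFm(Y), \inv)$ up to local equivalence, from which $\dl(Y) = -2$ and $\du(Y) = 2$ can be read off. Meanwhile, $d(Y) = 0$ follows from additivity of the Ozsv\'ath-Szab\'o correction term together with the individual values for the summands, which can be read directly off the tops of the graded roots already computed.

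Since $\dl$, $d$, and $\du$ are all invariants of integer homology cobordism, and since Theorem~\ref{thm:ds} forces $\du = d$ on every AR plumbed manifold, the gap $\du(Y) - d(Y) = 2$ rules out any homology cobordism from $Y$ to an AR plumbed three-manifold. The main obstacle will be executing the connected sum computation carefully enough to verify that the interaction between the dualized $-\Sigma(2,11,23)$ contribution and the $\inv$-fixed piece coming from $\Sigma(2,7,15) \# \Sigma(2,7,15)$ actually produces the asserted gap of $2$ between $\dl$ and $d$, not merely an upper bound. The explicit graded-root description of each factor, combined with the monotone simplification framework, should reduce this verification to a bounded combinatorial computation, but one must carefully track both the filtration levels in $\ker$ and $\coker$ of $1+J_0$ and the resulting $Q$-action in the tensor product.
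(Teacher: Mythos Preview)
Your overall strategy is correct and matches the paper's: determine the local equivalence class of each summand, dualize the $-\Sigma(2,11,23)$ factor, tensor, and read off $\dl,\du$; the final obstruction to being homology cobordant to an AR manifold then follows from Theorem~\ref{thm:ds}. The paper, however, bypasses the tensor-product computation entirely by observing that $\Sigma(2,7,15)$ and $\Sigma(2,11,23)$ are of projective type with $(d,\bar\mu)=(0,2)$ and $(0,3)$, so their locally equivalent model complexes coincide---after grading shifts of $+1/2$, $+1/2$, and $-1$ that sum to zero---with the complexes for $S^3_{-3}(T_{2,7})$ and $S^3_{5}(-T_{2,11})$ already analyzed in \cite[Section~9.2]{HMZ}. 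Thus the correction terms of $Y$ equal those of the manifold $Y'$ computed there, and no new tensor calculation is needed. Your plan would redo that computation from scratch, which is fine but redundant.

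One point to correct: your description of dualization as ``a reflection across the degree axis, which interchanges the roles of $\ker(1+J_0)$ and $\coker(1+J_0)$'' is not accurate. The dual of the standard complex of a monotone root is \emph{not} the standard complex of another graded root; the differential reverses direction, so the dual has $\partial$ going from the former leaf generators into the former angle generator (compare Figure~\ref{fig:AllDifferent}(b) and (c)). You must work with this dual $\inv$-complex directly, not with a reflected root. This is exactly the place where your acknowledged ``main obstacle'' lies, and it is the part of the argument that the paper outsources to \cite{HMZ}.
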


Finally, we give a new proof of the following result:
\begin{theorem}[cf. Fintushel-Stern \cite{FSinstanton}, Furuta \cite{FurutaHom}]
\label{thm:infgen}
The homology cobordism group $\Theta^3_{\Z}$ is infinitely generated.
\end{theorem}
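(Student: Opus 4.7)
The plan is to adapt the classical strategy of Fintushel--Stern and Furuta to the involutive setting, using the numerical ``gap'' $2\tdelta(Y, \s) := d(Y,\s) + 2\bar\mu(Y,\s) = d(Y,\s) - \dl(Y,\s)$ as the separating invariant. Concretely, I will exhibit an infinite family $\{Y_n\}_{n \geq 1}$ of AR plumbed integer homology spheres of projective type whose values $\tdelta(Y_n)$ are strictly positive and pairwise distinct, and then argue that such a family is $\Z$-linearly independent in $\Theta^3_{\Z}$. A natural candidate is the family of Seifert fibered homology spheres $\Sigma(p, 2p-1, 2p+1)$ for distinct primes $p$, already shown in \cite{Stoffregen2} to be of projective type.

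For the linear-independence argument, suppose for contradiction that $\#_i n_i Y_i$ bounds a homology four-ball for some nonzero tuple of integers $(n_i)$. Write this connected sum as $P \#(-Q)$, where
\[
P := \#_{i : n_i > 0} n_i Y_i, \qquad Q := \#_{i : n_i < 0} |n_i| Y_i
\]
are themselves connected sums of AR plumbed manifolds equipped with their canonical boundary-of-plumbing orientations. A homology cobordism from $P \# (-Q)$ to $S^3$ yields a homology cobordism from $P$ to $Q$, so $d(P) = d(Q)$ and $\dl(P) = \dl(Q)$. Since every summand is of projective type, the equality case of Corollary~\ref{cor:dssums} (equivalently, of \eqref{eq:dlsumsinequality}) yields
\[
d(P) - \dl(P) = 2\tdelta(Y_{M(P)}),
\]
where $M(P)$ denotes the index of the summand of $P$ with maximal $\tdelta$-value, and similarly for $Q$. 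Equating the two sides forces $\tdelta(Y_{M(P)}) = \tdelta(Y_{M(Q)})$; pairwise distinctness then implies $M(P) = M(Q)$, contradicting the disjointness of the index sets $\{i : n_i > 0\}$ and $\{i : n_i < 0\}$. In the degenerate case where one of $P, Q$ is empty, the other bounds a homology ball on its own, so $d = \dl = 0$ there, forcing $\tdelta(Y_{M(\cdot)}) = 0$ and again contradicting positivity.

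The main obstacle is Step 1: producing a family that simultaneously satisfies the projective-type condition and the positivity-plus-distinctness condition on $\tdelta$. Projective type corresponds to the associated graded root $R_{k_0}$ collapsing to a single linear chain under the monotone pruning algorithm described in Section~\ref{sec:monotone}, while $\tdelta(Y_n) > 0$ is equivalent to $d(Y_n) \neq -2\bar\mu(Y_n)$, i.e.\ the presence of a genuine bifurcation in the monotone simplification; both properties can be verified combinatorially from the plumbing graph via the formula of Theorem~\ref{thm:ds}. For $\Sigma(p, 2p-1, 2p+1)$ the requisite graded-root computations are essentially carried out in the literature, so what remains is to translate Stoffregen's $\alpha, \beta, \gamma$-level calculations into the language of $d$ and $\bar\mu$ and check that $p \mapsto \tdelta(\Sigma(p,2p-1,2p+1))$ is injective on an infinite subset of primes. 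Once this combinatorial input is in hand, the abstract argument above concludes the proof.
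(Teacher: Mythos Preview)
Your proposal is correct and follows essentially the same route as the paper: both arguments use the family $\Sigma(p,2p-1,2p+1)$, invoke Corollary~\ref{cor:dssums} in the projective-type equality case to compute $d-\dl$ (equivalently $\du-\dl$, since $\du=d$ here by Theorem~\ref{thm:dssums}(a)) of a positive connected sum as $2\tdelta$ of the summand with largest $\tdelta$, and then compare the two sides of a putative relation. The ``main obstacle'' you flag dissolves immediately: Stoffregen \cite[Theorem~8.9]{Stoffregen2} already records $d(Y_p)=p-1$ and $\bar\mu(Y_p)=0$, so $2\tdelta(Y_p)=p-1$ is manifestly positive and injective in $p$, and no further translation from the $\alpha,\beta,\gamma$ framework is required.
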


The original proofs of Theorem~\ref{thm:infgen} used Yang-Mills theory. Recently, Stoffregen \cite{Stoffregen2} gave another proof, which is based on $\pin$-equivariant Seiberg-Witten Floer homology. Our proof is modelled on that of Stoffregen, but uses involutive Heegaard Floer homology---thus, replacing gauge theory with symplectic geometry. Specifically, we show that the Brieskorn spheres
$$  \Sigma(p, 2p-1, 2p+1),\ p\geq 3, \ p \text{ odd}$$
are linearly independent in $\Theta^3_{\Z}$, forming a $\Z^{\infty}$ subgroup. (The same manifolds were considered by Stoffregen in \cite{Stoffregen2}.)

\medskip
\noindent {\bf Organization of the paper.} Section~\ref{sec:background} reviews some background material on involutive Heegaard Floer homology, plumbings, graded roots, and lattice homology. In Section~\ref{sec:J}, we identify the action of the conjugation involution on the Heegaard Floer homology of AR plumbed three-manifolds. In Section~\ref{sec:standard} we describe a free complex that computes the lattice homology $\He^-(R_k)$, which we call the standard complex associated to the graded root; moreover, we prove that a complex of this form (with its involution) is uniquely determined by $(R_k, J_0)$ up to equivalence. This key fact is used in Section~\ref{sec:hfiAR} to prove Theorems \ref{thm:main} and \ref{thm:ds}. In Section~\ref{sec:monotone} we describe the local equivalence class of a graded root, in terms of an associated monotone root. In Section~\ref{sec:tensor} we study the tensor products of standard complexes associated to monotone graded roots. These results are then put to use in Section~\ref{sec:app}, where we prove Theorems~\ref{thm:dssums}, \ref{thm:AllDifferent} and \ref{thm:infgen}, as well as Corollaries~\ref{cor:dssums} and \ref{cor:stab}.

\medskip
\noindent {\bf Acknowledgements.} We thank Tye Lidman, Matthew Stoffregen, Zolt\'an Szab\'o and Ian Zemke for helpful conversations and suggestions.

\section{Background}
\label{sec:background}

\subsection{Involutive Heegaard Floer homology}
\label{sec:hfi}
We assume that the reader is familiar with Heegaard Floer homology, as in \cite{HolDisk, HolDiskTwo, HolDiskFour, AbsGraded}. Throughout the paper we will work with coefficients in $\ff=\Z/2$. Furthermore, we will only consider rational homology spheres $Y$, equipped with self-conjugate $\spinc$ structures $\s$. In this situation, the Heegaard Floer groups
$$ \HFhat(Y, \s), \HFp(Y, \s), \HFm(Y, \s), \HFinf(Y, \s)$$
have absolute $\Q$-gradings, and are modules over the ring $\ff[U]$. We focus on the minus version, which is the homology of a complex $\CFm=\CFm(Y, \s)$, consisting of free $\ff[U]$-modules. The other three Floer complexes can be obtained from $\CFm$:
\begin{equation}
\label{eq:FromMinus}
 \CFhat = \CFm/(U=0), \  \CFinf = U^{-1}\CFm, \ \CFp = \CFinf/\CFm.
 \end{equation}

The construction of $\CFo(Y, \s)$ relies on fixing a particular Heegaard pair $\H = (H, J)$ for $Y$, consisting of a pointed Heegaard diagram $H = (\Sigma, \alphas, \betas, z)$, with $\Sigma$ embedded in $Y$, together with a family of almost-complex structures $J$ on $\Sym^g(\Sigma)$. We use the notation $\CFo(\H, \s)$ to emphasize the dependence of $\CFo(Y, \s)$ on $\H$. If $\H$ and $\H'$ are two Heegaard pairs for $Y$ with the same basepoint $z$, then by work of Juh\'asz and Thurston \cite{Naturality}, we obtain a preferred isomorphism 
$$ \Psi(\H, \H'): \HFo(\H, \s) \rightarrow \HFo(\H', \s). $$
More precisely, choosing any sequence of moves relating $\H$ and $\H'$ yields a chain homotopy equivalence 
$$ \Phi(\H, \H'): \CFo(\H, \s) \rightarrow \CFo(\H', \s) $$
inducing the isomorphism $\Psi$ above. The map $\Phi$ is itself unique up to chain homotopy in the sense that choosing any other sequence of moves yields a map chain-homotopic to it; cf. \cite[Proposition 2.3]{HMinvolutive}. This justifies the use of the notation $\CFo(Y, \s)$, rather than $\CFo(\H, \s)$.

We now define a grading-preserving homotopy involution on $\CFo(Y, \s)$. Given a Heegaard pair $\H$ for $Y$, define the conjugate Heegaard pair $\bH$ to be the conjugate Heegaard diagram
$$ \bh = (-\Sigma, \betas, \alphas, z), $$
together with the conjugate family of almost-complex structures $\bJ$ on $\Sym^g(-\Sigma)$. Then intersection points in $\Ta \cap \Tb$ for $H$ are in one-to-one correspondence with those for $\bh$, and $J$-holomorphic disks with boundary on $(\Ta, \Tb)$ are in one-to-one correspondence with $\bJ$-holomorphic disks with boundary on $(\Tb, \Ta)$. Thus, as observed in \cite[Theorem 2.4]{HolDiskTwo}, we obtain a canonical isomorphism
$$ \eta: \CFo(\H, \s) \rightarrow \CFo(\bH, \s), $$
where we have used the fact that in our case $\s = \bs$. Since $\H$ and $\bH$ are Heegaard pairs for the same three-manifold, we may further define the composition
$$ \inv = \Phi(\H', \H) \circ \eta: \CFo(\H, \s) \rightarrow \CFo(\H, \s). $$
In \cite[Section 2.2]{HMinvolutive} it is shown that $\inv$ is a well-defined map on $\CFo(Y, \s)$ (up to the notion of equivalence considered below) and that $\inv^2$ is chain-homotopic to the identity.

The involutive Heegaard Floer complex $\CFIm(Y, \s)$ is then in \cite{HMinvolutive} defined to be the mapping cone of $1 + \inv$ on $\CFm$, with a new variable $Q$ that marks the target (as opposed to the domain) of the cone; cf. the formula \eqref{eq:CFI}. By taking homology we obtain the invariant $\HFIm(Y, \s)$. The other three flavors of involutive Heegaard Floer homology are constructed similarly.

We can formalize the algebra underlying $\CFIm(Y, \s)$ as in \cite[Section 8]{HMZ}. The following is Definition 8.1 in \cite{HMZ}, slightly modified to allow for $\Q$-gradings:
\begin{definition}
\label{def:icx}
An {\em $\inv$-complex} is a pair $(C, \inv)$, consisting of
\begin{itemize}
\item a $\Q$-graded, finitely generated, free chain complex $C$ over the ring $\ff[U]$, where $\operatorname{deg}(U)=-2$. Moreover, we ask that there is some $\tau \in \Q$ such that the complex $C$ is supported in degrees differing from $\tau$ by integers. We also require that there is a relatively graded isomorphism
\begin{equation}
\label{eq:Utail}
U^{-1}H_*(C) \cong \ff[U, U^{-1}],
\end{equation}
and that $U^{-1}H_*(C)$ is supported in degrees differing from $\tau$ by even integers;
\item a grading-preserving chain homomorphism $\inv \co C \to C$, such that $\inv^2$ is chain homotopic to the identity.
\end{itemize}
\end{definition}

An example of an $\inv$-complex is $\CFm(Y, \s)$, equipped with the conjugation involution. Further, when $Y$ is an integral homology sphere, this is an $\inv$-complex where we can take $\tau=0$.

Let us also recall Definition 8.3 in \cite{HMZ}:
\begin{definition}
\label{def:E}
Two $\inv$-complexes $(C, \inv)$ and $(C', \inv')$ are called {\em equivalent} if there exist chain homotopy equivalences
$$ F \co C \to C', \ \ G \co C' \to C$$
that are homotopy inverses to each other, and such that 
$$F \circ \inv \simeq \inv' \circ F,  \ \ \ G \circ \inv' \simeq \inv \circ G,$$
where $\simeq$ denotes $\ff[U]$-equivariant chain homotopy.
\end{definition}

To each $\inv$-complex $(C, \inv)$, we can associate an {\em involutive complex}
\begin{equation}
\label{eq:invcx}
I^-(C, \inv) := \Cone\bigl(C_* \xrightarrow{\phantom{o} Q (1+\inv) \phantom{o}} Q \ccdot C_* [-1]\bigr),
 \end{equation}
For example, $I^-(\CFm, \inv) = \CFIm$. We refer to the homology of the mapping cone as the \textit{involutive homology} of $(C, \inv)$ and denote it by $\HIm(C, \inv)$.

The proof of the following lemma is based on a simple filtration argument; compare \cite[proof of Proposition 2.8]{HMinvolutive}.
\begin{lemma}
\label{lem:EquivCones}
An equivalence of $\inv$-complexes induces a quasi-isomorphism between the respective involutive complexes \eqref{eq:invcx}. 
\end{lemma}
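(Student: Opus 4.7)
The plan is to build an explicit chain map between the two involutive complexes and show it is a quasi-isomorphism by comparing the short exact sequences that define them.

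First, I would extract from the equivalence the maps $F \co C \to C'$ and $G \co C' \to C$, together with an $\ff[U]$-equivariant chain homotopy $H \co C \to C'$ (of appropriate degree) realizing $F \inv + \inv' F = dH + Hd$. The standard construction of chain maps between mapping cones then yields
\[
\Phi = \begin{pmatrix} F & 0 \\ QH & QF \end{pmatrix} \co I^-(C, \inv) \longrightarrow I^-(C', \inv'),
\]
where we view $I^-(C, \inv) = C \oplus Q \cdot C[-1]$ with the usual cone differential. A direct check using $F(1+\inv) + (1+\inv')F = F\inv + \inv' F = dH + Hd$ confirms that $\Phi$ commutes with the differentials.

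Second, I would invoke the tautological short exact sequence coming from the $Q$-filtration,
\[
0 \longrightarrow Q \cdot C[-1] \longrightarrow I^-(C, \inv) \longrightarrow C \longrightarrow 0,
\]
together with its analogue for $(C', \inv')$. By construction, $\Phi$ fits into a commutative ladder between these two sequences, whose left vertical map is $QF$ and whose right vertical map is $F$. Since $F$ is a chain homotopy equivalence, both $F$ and $QF$ are quasi-isomorphisms, and the five-lemma applied to the associated long exact sequences in homology shows that $\Phi_*$ is an isomorphism on involutive homology.

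The only real thing to get right is the bookkeeping in constructing $\Phi$ and verifying the chain-map identity; once that is in place, the remainder is formal homological algebra, exactly analogous to the filtration argument used in the proof of \cite[Proposition 2.8]{HMinvolutive}.
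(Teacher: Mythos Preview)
Your proposal is correct and follows essentially the same approach as the paper, which simply says the lemma follows from ``a simple filtration argument'' and refers to \cite[proof of Proposition 2.8]{HMinvolutive}. You have supplied the details of that argument: the explicit cone map $\Phi$ built from $F$ and the homotopy $H$, followed by the five-lemma on the $Q$-filtration short exact sequence, which is precisely what the cited filtration argument amounts to.
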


In this paper, we will describe the equivalence class of the pair $(\CFm(Y, \s), \inv)$ for certain three-manifolds. By the above lemma, the equivalence class determines the involutive Heegaard Floer homology $\HFIm(Y, \s)$, up to isomorphism. Furthermore, from $\CFm$ we can derive the other Heegaard Floer complexes using \eqref{eq:FromMinus}, and the map $\inv$ descends to similar maps on those complexes. From here, we can get the corresponding flavors of involutive Heegaard Floer homology, again up to isomorphism.

The equivalence class of $(\CFm, \inv)$ also appears in the following connected sum formula, which follows from \cite[Theorem 1.1]{HMZ}.
\begin{theorem}[\cite{HMZ}]
\label{thm:ConnSum}
Suppose $Y_1$ and $Y_2$ are rational homology spheres equipped with self-conjugate $\spinc$ structures $\s_1$ and $\s_2$. Let $\inv_1$, $\inv_2$ and $\inv$ denote the conjugation involutions on the Floer complexes $\CFm(Y_1, \s_1)$, $\CFm(Y_2,\s_2)$ and $\CFm(Y_1 \# Y_2, \s_1 \# \s_2)$. Then, the equivalence class of the $\inv$-complex $(\CFm(Y_1 \# Y_2), \inv)$ is the same as that of
$$ \bigl( \CFm(Y_1, \s_1) \otimes_{\ff[U]} \CFm(Y_2, \s_2) [-2], \ \inv_1 \otimes \inv_2 \bigr),$$
where $[-2]$ denotes a grading shift.
\end{theorem}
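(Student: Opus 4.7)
The plan is to prove the result at the chain level by choosing a Heegaard diagram for $Y_1 \# Y_2$ that is explicitly a connected sum of diagrams for the two summands, and then tracking the conjugation involution through this identification.

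First, I would fix weakly admissible pointed Heegaard pairs $\H_i = (H_i, J_i)$ for $(Y_i, \s_i)$ and form the connected sum pair $\H = \H_1 \#_z \H_2$ by attaching a small tube near the basepoints. By the Ozsv\'ath--Szab\'o connected sum formula for the minus flavor \cite{HolDiskTwo}, there is a chain-level isomorphism
\[
F \co \CFm(\H, \s_1 \# \s_2) \xrightarrow{\ \cong\ } \CFm(\H_1, \s_1) \otimes_{\ff[U]} \CFm(\H_2, \s_2)[-2],
\]
because for this special diagram (with a sufficiently stretched neck) no holomorphic disk crosses the tube, so the differential splits as $\partial_1 \otimes 1 + 1 \otimes \partial_2$, and the $[-2]$ shift records the contribution of the stabilization near the neck.

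Second, conjugation commutes with the connected sum construction: one has canonically $\bH = \bH_1 \#_z \bH_2$, and the tautological isomorphism
\[
\eta \co \CFm(\H, \s_1\#\s_2) \longrightarrow \CFm(\bH, \s_1\#\s_2)
\]
is sent, under $F$ and its analogue $\bar F$ for $\bH$, to the tensor product $\eta_1 \otimes \eta_2$.

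The crux of the argument, and the main technical obstacle, is the third step: showing that the change-of-diagrams map $\Phi(\bH, \H)$ agrees up to $\ff[U]$-equivariant chain homotopy with the tensor product $\Phi(\bH_1, \H_1) \otimes \Phi(\bH_2, \H_2)$ under the above identifications. The strategy is to realize $\Phi(\bH, \H)$ via a specific sequence of Heegaard moves from $\bH$ to $\H$ that is itself a connected sum of sequences for the two summands, all supported away from the tube; by the homotopy uniqueness of naturality maps \cite[Proposition 2.3]{HMinvolutive}, any such realization computes $\Phi(\bH, \H)$ up to homotopy. For each triangle or continuation map in this factorized sequence, a neck-stretching argument on the tube shows that the relevant count of holomorphic polygons on $\H$ decomposes as a product of the corresponding counts on $\H_1$ and $\H_2$, so the induced map is precisely the tensor product of the maps for the two summands. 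Composing along the entire sequence gives $F \circ \inv \simeq (\inv_1 \otimes \inv_2) \circ F$, and combined with the analogous statement for a chosen homotopy inverse of $F$ this yields an equivalence of $\inv$-complexes in the sense of Definition~\ref{def:E}. By Lemma~\ref{lem:EquivCones}, this equivalence passes to the associated involutive complexes, completing the proof.
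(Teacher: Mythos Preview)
The paper does not give its own proof of this theorem: it is stated as a citation of \cite[Theorem 1.1]{HMZ}, with no argument beyond the attribution and a remark about the grading shift. So there is no in-paper proof to compare against; your proposal is really an attempted sketch of the argument from \cite{HMZ}.

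As a sketch, your outline captures the right intuition but underestimates the technical content. Two points in particular are glossed over. First, the assertion that for the connected-sum diagram ``no holomorphic disk crosses the tube, so the differential splits'' is not literally true for any fixed almost-complex structure; it is a degeneration statement requiring a neck-stretching limit, and making the resulting identification into an honest chain isomorphism (rather than a chain homotopy equivalence) takes work. Second, and more seriously, your claim that the Heegaard moves relating $\bH$ to $\H$ can all be taken ``supported away from the tube'' is exactly where the difficulty lies: the Juh\'asz--Thurston naturality package uses (de)stabilizations near the basepoint, which is precisely where the connected-sum tube is attached, so one cannot simply separate the moves into independent sequences on the two sides. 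The actual proof in \cite{HMZ} circumvents this by appealing to Zemke's graph-cobordism formalism and a careful analysis of the interaction between the connected-sum map and the basepoint-moving maps, rather than by a direct neck-stretching on triangle counts. Your proposal is therefore not wrong in spirit, but the step you flag as ``the main technical obstacle'' is genuinely harder than the one-paragraph treatment you give it, and would not go through as written.
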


\begin{remark}
The grading shift is due to the fact that, with the usual conventions in Heegaard Floer theory, $\HFm(S^3)=\Z$ is supported in degree $-2$.  
\end{remark}

We now review the definition of the correction terms. In \cite{AbsGraded}, Ozsv\'ath and Szab\'o defined the $d$-invariant as the minimal degree of a nonzero element in the infinite tower of $\HFp$; or, equivalently, as the maximal degree of a nonzero element in the infinite tail of $\HFm$, plus two:
$$ d(Y, \s) =  \max \{r \mid \exists \ x \in \HFm_r(Y,\s),\forall \ n \geq 0, U^nx\neq 0\} +2. $$

In involutive Heegaard Floer homology, there are two analogous invariants
\[ \dl(Y,\s) = \max \{r \mid \exists \ x \in \HFIm_r(Y, \s), \forall \ n \geq 0, \ U^nx\neq 0 \ \text{and} \ U^nx \notin \operatorname{Im}(Q)\} + 1 \]
and
\[ \du(Y,\s) = \max \{r \mid \exists \ x \in \HFIm_r(Y,\s),\forall \ n \geq 0, U^nx\neq 0; \exists \ m\geq 0 \ \operatorname{s.t.} \ U^m x \in \operatorname{Im}(Q)\} +2.\]
See \cite[Section 5.1]{HMinvolutive} and \cite[Lemma 2.9]{HMZ}. 

(The shifts by $1$ and $2$ in these definitions are chosen so that $d=\dl=\du=0$ for $Y=S^3$.)

The above invariants satisfy
$$
\dl(Y,\s) \leq  d(Y,\s) \leq \du(Y,\s)$$
and
$$\du(Y, \s) \equiv  \dl(Y, \s) \equiv d(Y, \s) \pmod {2\Z}.$$
Moreover, they descend to maps
$$ d, \dl, \du: \Theta^3_{\Z} \to 2\Z,$$
where $\Theta^3_{\Z}$ denotes the homology cobordism group (consisting of integral homology $3$-spheres, up to the equivalence given by homology cobordisms). Of the three maps above, only $d$ is a homomorphism.

For the purpose of computing the involutive Floer correction terms, it will be useful for us to consider a weaker notion of equivalence than discussed above. Recall Definition 8.5 of \cite{HMZ}:
\begin{definition}
\label{def:localE}
Two $\inv$-complexes $(C, \inv)$ and $(C', \inv')$ are called {\em locally equivalent} if there exist (grading-preserving) homomorphisms
$$ F \co C \to C', \ \ G \co C' \to C$$
such that 
$$F \circ \inv \simeq \inv' \circ F,  \ \ \ G \circ \inv' \simeq \inv \circ G,$$
and $F$ and $G$ induce isomorphisms on homology after inverting the action of $U$.
\end{definition}

Definition \ref{def:localE} is modelled on the relation between Floer chain complexes induced by a homology cobordism. As observed in \cite[Lemma 8.7]{HMZ}, the relation of local equivalence respects taking tensor products, in the sense that if $(C, \inv)$ or $(C, \inv')$ is changed by a local equivalence, then their tensor product also changes by a local equivalence. One can thus define a group structure on the set of $\inv$-complexes modulo local equivalence, with multiplication given by the tensor product; cf. \cite[Section 8.3]{HMZ}. In \cite{HMZ} only $\inv$-complexes with $\tau=0$ were considered, and the resulting group was denoted $\Inv$. One can also form such a group from all $\inv$-complexes, which we denote by $\Inv_{\Q}$. Of course, $\Inv_{\Q}$ is just the direct sum of infinitely many copies of $\Inv$, one for each $[\tau] \in \Q/2\Z$.

Given an $\inv$-complex $(C, \inv)$, we define $\dl_{\Inv}$ and $\du_{\Inv}$ analogously to the involutive Floer correction terms $\dl$ and $\du$ above. The same argument that shows $\dl$ and $\du$ are homology cobordism invariants (cf. \cite[Proposition 5.4]{HMinvolutive}) proves that these depend only on the local equivalence class of $(C, \inv)$. Hence we have (non-homomorphisms)
$$ \dl_{\Inv}, \du_{\Inv}: \Inv_{\Q} \rightarrow \Q. $$
If we have an actual Heegaard Floer complex $(\CFm(Y, \s), \iota)$, we can identify it with its equivalence class in $\Inv_{\Q}$, in which case $\dl_{\Inv} = \dl$ and $\du_{\Inv} = \du$. Theorem~\ref{thm:ConnSum} states that under this identification, the Floer $\inv$-complex corresponding to the connected sum of $(Y_1, \s_1)$ and $(Y_2, \s_2)$ is mapped to the grading-shifted tensor product of their images in $\Inv_{\Q}$. 


\subsection{Plumbing graphs}
\label{sec:pg}
We review here some facts about plumbings, with an emphasis on the almost-rational graphs introduced in \cite{NemethiOS}. We refer to \cite{Neu}, \cite{Plumbed} and \cite{NemethiOS} for more details.

Let $G$ be a weighted graph, i.e., a graph equipped with a function $m: \Vert(G) \to \Z$, where $\Vert(G)$ denotes the set of vertices of $G$. Let $W=W(G)$ be the four-manifold with boundary obtained by attaching two-handles to $B^4$ (or, equivalently, plumbing together disk bundles over $S^2$) according to the graph $G$. We denote by $Y=Y(G)$ the boundary of $W$, with the induced orientation.

Each vertex $v \in \Vert(G)$ gives rise to a generator of $H_2(W; \Z)$ represented by the core of the corresponding two-handle. The intersection form on the integer lattice $L = H_2(W; \Z)$ is given by
$$ \langle v, w \rangle = \begin{cases}
m(v) & \text{if $v=w$},\\
1 & \text{if $v$ and $w$ are connected by an edge},\\
0 & \text{otherwise.}
\end{cases}$$

From now on we will assume that the graph $G$ is a tree and the intersection form on $L$ is negative definite; this is equivalent to $Y$ being a rational homology sphere that is realized as the link of a  normal surface singularity. 

For $x \in L$, we write $x \geq 0$ if $x$ is a linear combination of the generators $v \in \Vert(G)$ with non-negative coefficients. Further, we write $x > 0$ if $x \geq 0$ and $x \neq 0$. 

The dual lattice $L' = \Hom(L, \Z)$ may be identified with $H_2(W, Y; \Z)$, and the exact sequence
$$ 0 \to H_2(W; \Z) \to H_2(W, Y ; \Z) \to H_1(Y; \Z) \to 0$$
shows that we can view $L$ as a sublattice of $L'$. The intersection form on $L$ extends to a $\Q$-valued intersection form of $L'$.

Let $\Char(G)$ be the set of characteristic vectors for $G$, i.e., vectors $k \in L'$ such that 
$$k(x) \equiv \langle x, x \rangle \mod{2} $$
for all $x \in L$. There is a natural action of $L$ on $\Char(G)$ given by $x: k \mapsto k + 2x$, and we denote the orbit of any $k \in \Char(G)$ under this action by $[k]$. Characteristic vectors $k$ may be identified with $\spinc$ structures on $W$, and equivalence classes $[k]$ may be identified with $\spinc$ structures on $Y$. Self-conjugate $\spinc$ structures on $Y$ correspond to the orbits $[k]$ lying entirely in the sublattice $L$, that is, those which satisfy $[k] = [-k]$.

Fix $k \in \Char(G)$, and define $\chi_k: L \to \Z$ by
\begin{equation}
\label{eq:chik}
 \chi_k(x) = - (k(x) + \langle x, x \rangle)/2.
 \end{equation}

There is a {\em canonical characteristic element} $K \in \Char(G)$ characterized by $K(v) = -m(v)-2$ for all $v \in \Vert(G)$. If we view $Y$ as the link of a normal surface singularity $X$ with resolution $\tilde X$, then $K$ is the first Chern class of the canonical bundle of the complex structure on $\tilde X$. 

A normal surface singularity is called {\em rational} if its geometric genus is zero. By the work of Artin \cite{Artin1, Artin2}, the manifold $Y = Y(G)$ is the boundary of a rational normal surface singularity if and only if
$$ \chi_K(x) \geq 1 \text{ for any } x > 0, x \in L.$$
If $G$ satisfies this property, we will say that $G$ is a rational plumbing graph.

\begin{definition}[N{\'e}methi \cite{NemethiOS}]
Let $G$ be a weighted tree with a negative definite intersection form on $L$, as above. We say that $G$ is {\em AR (almost-rational)} if there exists a vertex $v_0$ of $G$ such that by replacing the value $m_0 = m(v_0)$ with some $m'_0 \leq m_0$ we obtain a rational plumbing graph $G'$.

In this situation, we refer to $Y=Y(G)$ as an {\em AR plumbed three-manifold}.
\end{definition}

The class of AR graphs includes:
\begin{itemize}
\item rational graphs;
\item elliptic graphs (corresponding to normal surface singularities of geometric genus equal to one);
\item negative-definite, star-shaped graphs;
\item negative-definite plumbing trees with at most one bad vertex, that is, a vertex $v$ such that $m(v) > -d(v)$, where $d(v)$ is the degree of $v$. This is the class of plumbings considered by Ozsv\'ath and Szab\'o in \cite{Plumbed}.
\end{itemize}

In particular, if $Y$ is a rational homology sphere $Y$ that is Seifert fibered over an orbifold with underlying space $S^2$, then $Y$ is an AR plumbed three-manifold coming from a star-shaped graph $G$. Indeed, suppose that $Y$ is the Seifert bundle with negative orbifold Euler number $e$, and Seifert invariants
$$ (a_1, b_1), \dots, (a_n, b_n),$$
with $0 < b_i < a_i$, $\gcd(a_i, b_i) = 0$. Write $a_i/b_i$ as a continued fraction
$$a_i/b_i = k_1^i - \cfrac{1}{ k_2^i - \cfrac{1} {\ddots \, - \cfrac{1}{ k_{r_i}^i}}}$$ Then, $G$ is the star-shaped graph with $r$ arms, such that the decoration of the central vertex is
$$ e_0=e- \sum_{i=1}^n \frac{b_i}{a_i}$$
and along the $i^{\operatorname{th}}$ arm we see the decorations $-k_1^i, -k_2^i, \dots, -k_{r_i}^i$, in that order starting from the central vertex.

When
\begin{equation}
\label{eq:e}
 a_1 a_2 \dots a_n e = -1,
 \end{equation}
then $Y$ is an integral homology sphere, denoted $\Sigma(a_1, \dots, a_n)$. The values of $b_i$ are uniquely determined by the values of $a_i$, the fact that $e_0 \in \Z$, and the condition \eqref{eq:e}. Moreover, any Seifert fibered integral homology sphere is of the form $\pm \Sigma(a_1, \dots, a_n)$ for some $a_i$.

\begin{remark}
All Seifert fibered rational homology spheres have base orbifold with underlying space $S^2$ or $\rp^2$. (Further, if they are integer homology spheres, this has to be $S^2$.) When the underlying space is $\rp^2$, then the Seifert fibered rational homology sphere is an L-space, by \cite[Proposition 18]{BoyerGordonWatson}. We do not study this case in our paper, since the involutive Heegaard Floer homology of L-spaces is easily determined by the ordinary Heegaard Floer homology; cf. \cite[Corollary 4.8]{HMinvolutive}.
\end{remark}

\subsection{Graded roots and lattice homology}
\label{sec:lattice}
The Heegaard Floer homology of an AR plumbed three-manifold $Y(G)$ can be described in terms of a combinatorial object, called a {\em graded root}, which is associated to the weighted graph $G$. Graded roots were introduced in \cite{NemethiOS}, where the focus was on describing the groups $\HFp(-Y(G))$. In this paper we will concentrate on the minus version, $\HFm(Y(G))$. By the duality isomorphism from \cite[Proposition 7.11]{HolDiskFour}, we have
\begin{equation}
\label{eq:dualHF}
\HF_*^{-}(Y, \s) \cong \HF_+^{-*-2}(-Y, \s).
\end{equation}
Thus, we can consider the same graded roots as in \cite{NemethiOS}, except here we find it more convenient to reverse the grading, i.e., reflect them vertically. While the graded roots in \cite{NemethiOS} have an infinite upward stem and finite roots below, ours will have an infinite downward stem and a finite tree opening upward.

Specifically, with our conventions, a {\em graded root} is an infinite tree $R$ equipped with a grading function $\chi: \Vert(R) \to \Q$ such that
\begin{itemize}
\item $\chi(u) - \chi(v) = \pm 1$ for any edge $(u, v)$,
\item $\chi(u) < \max\{\chi(v), \chi(w)\}$ for any edges $(u,v)$ and $(u,w)$ with $v \neq w$,
\item $\chi$ is bounded above, 
\item $\chi^{-1}(k)$ is finite for any $k \in \Q$, and
\item $\# \chi^{-1}(k)=1$ for $k \ll 0$.
\end{itemize}
Note that, since $R$ is connected, the difference in the values of $\chi$ at any two vertices is an integer.

To every graded root $R$ we can associate an $\ff[U]$-module $\He^-(R)$, with one generator for each $v \in \Vert(R)$, and relations
$$ U \cdot v = w \text{ if } (v, w) \text{ is an edge, and } \chi(v) - \chi(w)=1.$$
We define the degree of $v \in \Vert(R)$ to be $2\chi(v)$, so that $U$ is a map of degree $-2$. It will usually be convenient for us to think of $R$ in terms of $\He^-(R)$ and view the vertices of $R$ being graded by their degree, rather than the function $\chi$. Thus (for example), when we speak of a graded root shifted by $\sigma$, we mean that the grading $\chi$ is shifted by $\sigma/2$, and when we construct graded roots, we will sometimes use the terms ``degree" and ``grading" interchangeably to mean $2\chi$. This unfortunate overlap of terminology is due to the fact that $2\chi$, rather than $\chi$, is more naturally related to the Maslov grading in Floer homology. 

Let us also mention that by reflecting a graded root $R$ across the horizontal line of grading $-1$ yields a downwards opening graded root $R^r$, of the kind considered in \cite{NemethiOS}. There is also an associated $\ff[U]$-module $\He^+(R)$. Concretely, $\He^+(R)$ has one generator for each $v \in \Vert(R)$, in grading $-2-2\chi(v)$, and has relations
$$ U \cdot w = v \text{ if } (v, w) \text{ is an edge, and } \chi(v) - \chi(w)=1.$$

Now let $G$ be a plumbing graph as in Section~\ref{sec:pg}, and pick $k \in \Char(G)$. Following N{\'e}methi in \cite{NemethiOS, Nem} (cf. also \cite[Section 1.2]{Dai}), we can associate to $(G, k)$ a graded root $R_k$ as follows. We consider the lattice $L=H_2(W; \Z)$, and the function $\chi_k: L \to \Z$ from \eqref{eq:chik}. Let us identify $L$ with $\Z^s$ using the basis coming from the vertices of $G$. Let $\cQ_q$ be the set of $q$-dimensional side-length-one lattice cubes in $\Z^s$. We define a weight function 
$$ w_q: \cQ_q \to \Z$$
by letting  $w_q(\Box_q)$ be the maximum of $\chi_k(x)$ over all vertices $x$ of $\Box_q$. Set
$$ S_{n} = \bigcup_q \{\Box_q \in \cQ_q \mid w_q(\Box_q) \leq n\}.$$
Note that there is an inclusion $S_{n-1} \hookrightarrow S_n$. 

We let the vertices $v$ of $R_k$ in grading $-n$ be the connected components of $S_n$. Further, for $v \in S_{n-1}$ and $w \in S_n$, we let $(v,w)$ be an edge of $R_k$ if and only if $v \subseteq w$.

We define the {\em lattice homology} $\He^-(G, k)$ to be $\He^-(R_k)$. 

Building on the work of Ozsv\'ath and Szab\'o \cite{Plumbed}, N\'emethi proved:
\begin{theorem}[N\'emethi \cite{NemethiOS}]
\label{thm:latticeisom}
Let $G$ be an AR plumbing graph and $Y(G)$ be the corresponding three-manifold. For $k \in \Char(G)$, let 
$$\sigma = \sigma(G, k) = -\frac{|G| + k^2}{4},$$
where $|G|$ denotes the number of vertices of $G$.

Then, the Heegaard Floer homology of $Y(G)$ is given by
\begin{equation}
\label{eq:HFmAR}
\HFm(Y(G), [k]) \cong \He^-(R_k)[\sigma+2].
\end{equation}
In particular, the $d$-invariants of $Y(G)$ are given by
$$ d(Y(G), [k]) = \frac{|G| + k^2}{4} - 2\min \chi_k= \max_{k' \in [k]}\frac{|G| + (k')^2}{4}.$$
\end{theorem}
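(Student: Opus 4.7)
My plan is to follow the approach of Ozsv\'ath-Szab\'o \cite{Plumbed} as extended by N\'emethi \cite{NemethiOS}. Via the duality isomorphism \eqref{eq:dualHF}, the stated identification $\HFm(Y(G),[k]) \cong \He^-(R_k)[\sigma+2]$ is equivalent to identifying $\HFp(-Y(G),[k])$ with the plus-flavor lattice homology $\He^+(R_k)$ (which has the reversed grading convention of \cite{NemethiOS}), with the $+2$ in the shift absorbed by the $-*-2$ of \eqref{eq:dualHF}. So the core content is the plus-version identification.

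I would prove this by induction on the defect $m_0 - m_0'$ between the AR graph $G$ and a rational graph $G'$ obtained by lowering the weight of the distinguished vertex $v_0$. In the base case ($G$ rational), the condition $\chi_K(x) \geq 1$ for $x > 0$ implies that $Y(G)$ is an L-space by an Artin-type induction on $L$; correspondingly, rationality forces $R_k$ to collapse to a single downward-opening stem (one component of $S_n$ at each level), and matching the two towers reduces to a direct grading computation. For the inductive step, increasing $m(v_0)$ by one corresponds to a definite surgery on $Y(G)$, whose effect on $\HFp$ is controlled by the Ozsv\'ath-Szab\'o surgery exact triangle; one then checks that the same triangle is satisfied on the lattice side via a corresponding change to the filtration $\{S_n\}$ and to the associated graded root.

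The main obstacle is this surgery-compatibility step, i.e. the chain-level quasi-isomorphism relating the Heegaard Floer complex to the graded root model. In \cite{Plumbed} Ozsv\'ath and Szab\'o accomplish this for plumbings with at most one bad vertex through an explicit ``path complex'' construction. N\'emethi's extension in \cite{NemethiOS} then shows that the path complex of an AR graph is itself quasi-isomorphic to the simpler complex computing $\He^+(R_k)$; both steps require careful tracking of gradings and of the combinatorial effect of weight modifications on the cubical structure of $L$.

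Once the isomorphism $\HFm(Y(G),[k]) \cong \He^-(R_k)[\sigma+2]$ is established, the $d$-invariant formulas follow quickly. By the graded root axioms, every vertex $v$ has a unique downward neighbor, so $U^n v \neq 0$ for all $n \geq 0$; hence the maximum degree of an element with infinite $U$-tail in $\He^-(R_k)$ is the maximal degree of any vertex, namely $-2\min\chi_k$, attained at lattice points realizing $\min\chi_k$. Applying the shift $[\sigma+2]$ and the usual $+2$ convention yields $d(Y(G),[k]) = -2\min\chi_k - \sigma = (|G|+k^2)/4 - 2\min\chi_k$. The reformulation $d(Y(G),[k]) = \max_{k' \in [k]}(|G|+(k')^2)/4$ follows from the identity $(k+2x)^2 = k^2 + 4k(x) + 4\langle x,x\rangle = k^2 - 8\chi_k(x)$, which yields $\min_{x\in L}\chi_k(x) = (k^2 - \max_{k' \in [k]}(k')^2)/8$ upon noting that $k'$ ranges over $[k]$ as $x$ ranges over $L$.
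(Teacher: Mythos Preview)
The paper does not give its own proof of this theorem; it is stated as a cited background result of N\'emethi, and no argument is supplied. What the paper does do, inside the proof of Theorem~\ref{thm:Jequiv}, is recall the \emph{construction} of the isomorphism: one sends $x \in \HFp(-Y,\s)$ to the function $\phi_x$ on $\Char_\s(G)$ defined by $\phi_x(k) = F^+_{W,k}(x)$, observes that the adjunction relations satisfied by the cobordism maps force $\phi_x$ to lie in the module identified (via \cite[Proposition~4.7]{NemethiOS}) with $\He^+(R_k)[-\sigma-2]$, and cites \cite[Theorem~8.3]{NemethiOS} for the statement that $x \mapsto \phi_x$ is an isomorphism.

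Your proposal has the right ingredients but misstates the logical structure of the Ozsv\'ath--Szab\'o/N\'emethi argument. You phrase it as a direct inductive comparison of the two sides via the surgery exact triangle, with no map specified in advance. That is not how the proof goes: one first writes down the explicit comparison map $T^+$ via cobordism maps, and \emph{then} uses surgery-triangle arguments (reducing ultimately to lens spaces) to show $T^+$ is injective, with surjectivity handled combinatorially. Without a natural transformation between the two theories, a bare five-lemma induction does not get off the ground: knowing that two of three terms in parallel exact triangles agree does not pin down the third. More to the point for this paper, the cobordism-map description of the isomorphism is precisely what the proof of Theorem~\ref{thm:Jequiv} exploits to identify the conjugation involution, so an abstract inductive isomorphism, even if it could be made to work, would not serve the paper's needs.

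Your derivation of the $d$-invariant formulas from the isomorphism is correct and matches the computation implicit in the paper.
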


Note that $\HFm$ of an AR plumbed three-manifold is always supported in degrees congruent to $\sigma$ modulo $2\Z$.

We will sometimes abuse notation slightly and denote the shifted root $R_k[\sigma + 2]$ simply by $R$ when no confusion is possible. We refer to $R = R_k[\sigma + 2]$ as the {\em graded root associated to} $\HFm(Y(G), \s)$. Note that this depends only on $[k] \in \Spinc(Y(G))$, not on $k$ itself, as indeed we have
$$ \HFm(Y(G), \s) \cong \He^-(R).$$
Since the highest degree of a nonzero element in $\HFm$ is $d(Y(G), [k])-2$, the $d$-invariant is the maximum degree of a vertex in $R$, plus two.

With regard to the plus versions, in view of \eqref{eq:dualHF}, we have isomorphisms
\begin{equation}
\label{eq:HFpAR}
\HFp(-Y(G), [k]) \cong \He^+(R) \cong \He^+(R_k)[-\sigma-2].
\end{equation}

\begin{example}
Consider the Brieskorn sphere $Y=\Sigma(2,7,15)$, which is $-1$ surgery on the torus knot $T_{2,7}$. The manifold $Y$ can be represented as the boundary of a plumbing on the AR graph
$$\begin{picture}(0,0)%
\includegraphics{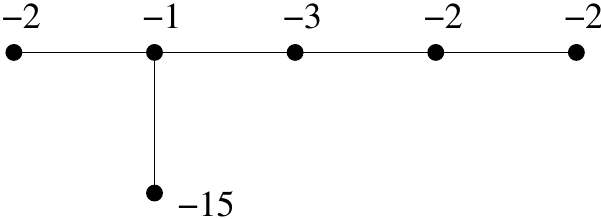}%
\end{picture}%
\setlength{\unitlength}{2960sp}%
\begingroup\makeatletter\ifx\SetFigFont\undefined%
\gdef\SetFigFont#1#2#3#4#5{%
  \reset@font\fontsize{#1}{#2pt}%
  \fontfamily{#3}\fontseries{#4}\fontshape{#5}%
  \selectfont}%
\fi\endgroup%
\begin{picture}(3829,1395)(1111,-2626)
\end{picture}%
$$
The graded root associated to $\HFm(\Sigma(2, 7, 15))$ was computed in \cite{NemethiGRS, Tweedy}, and is shown in Figure~\ref{2715root}. The $d$-invariant is zero.
\end{example}
 
 We now introduce the following concept.
\begin{definition}
A {\em symmetric graded root} is a graded root $R$ together with an involution $J : \Vert(R) \to \Vert(R)$ such that
\begin{itemize}
\item $ \chi(v) = \chi(Jv)$ for any vertex $v$,
\item $(v, w)$ is an edge in $R$ if and only if $(Jv, Jw)$ is an edge on $R$,
\item for every $k \in \Q$, there is at most one $J$-invariant vertex $v$ with $\chi(v) = k$.
\end{itemize} 
\end{definition}

We can represent every symmetric graded root by a planar tree that is symmetric about the vertical axis (the infinite downwards stem). The involution $J$ is the reflection about this axis.

Note that, for a symmetric graded root, the involution $J$ induces an involution (still denoted $J$) on the corresponding lattice homologies $\He^-(R_k)$, $\He^+(R_k)$. The action of $J$ there commutes with that of the variable $U$.
 
Symmetric graded roots appear naturally when we consider self-conjugate $\spinc$ structures on an AR plumbed three-manifold $Y(G)$. Let $\s = [k]$ be such a $\spinc$ structure. As before, let $L$ be the integer lattice spanned by the vertices of $G$. There is an obvious involution $J_0$ on $L$ given by
$$ J_0x = -x - k, $$
where $k$ is viewed as lying in the lattice $L'$. Since $\s$ is self-conjugate, $J_0$ maps $L$ into itself, and it is easily checked that the weight function $w_q$ is invariant under $J_0$. Thus, $J_0$ induces an involution on the set of connected components of each set $S_n$, and hence an involution on the graded root $R_k$. This makes $(R_k, J_0)$ into a symmetric graded root. See \cite[Section 2.1]{Dai} for further discussion.
 

\section{The involution on Heegaard Floer homology}
\label{sec:J}
In Section~\ref{sec:hfi} we introduced the homotopy involution $\inv$ on the Heegaard Floer complexes of three-manifolds equipped with self-conjugate $\spinc$ structures. The goal of this section will be to identify the action induced by $\inv$ at the level of Heegaard Floer homology, for AR plumbed three-manifolds.

\begin{theorem}
\label{thm:Jequiv}
Let $Y = Y(G)$ be an AR plumbed three-manifold and let $\s = [k]$ be a self-conjugate $\spinc$ structure on $Y$. Then, under the isomorphism \eqref{eq:HFmAR} from Theorem~\ref{thm:latticeisom}, the induced action $\inv_*$ of $\inv$ on $\HFm(Y, \s)$ coincides with the reflection involution $J_0$ on $\He^-(R_k)[\sigma+2]$. 
\end{theorem}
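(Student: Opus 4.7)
The plan is to adapt the argument from \cite{Dai}, where the analogous identification is carried out in the $\pin$-equivariant Seiberg-Witten setting, to the Heegaard Floer context. The strategy is to work with the chain-level lattice model of Ozsv\'ath-Szab\'o and N\'emethi that underlies the isomorphism \eqref{eq:HFmAR}, and to track the conjugation action through that model.

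First, I would recall that by \cite{Plumbed, NemethiOS}, $\HFm(Y(G), [k])$ is computed by a chain complex whose generators are indexed by the characteristic orbit $[k]$. Writing $k' = k + 2x$ with $x \in L$, generators are parametrized by lattice points, with the function $\chi_k$ from \eqref{eq:chik} controlling the grading; the graded root $R_k$ itself records the combinatorics of the sublevel sets $S_n = \{\chi_k \leq n\}$ of $L$.

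Second, I would identify the two involutions at the level of the parametrizing lattice. On $\Spinc(Y)$, conjugation sends $[k] \mapsto [-k]$; since $\s = [k]$ is self-conjugate, this preserves the orbit and descends to a map on $L$. Under the identification $k' = k + 2x$, the correspondence $k' \mapsto -k'$ becomes precisely $x \mapsto -k - x = J_0 x$. A direct calculation shows $J_0$ preserves $\chi_k$:
\[
\chi_k(J_0 x) = -\tfrac{1}{2}\bigl(k(-k-x) + \langle -k-x, -k-x\rangle\bigr) = -\tfrac{1}{2}\bigl(k(x) + \langle x, x\rangle\bigr) = \chi_k(x),
\]
using that self-conjugacy forces a representative $k \in L$, so that $\langle k, x \rangle = k(x)$. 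Hence $J_0$ permutes the sets $S_n$ and descends to the reflection involution on $R_k$.

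Third, and most delicately, I would argue that the chain-level conjugation $\inv$ on $\CFm(Y, \s)$ corresponds, under the identification with the lattice-theoretic model, to the map on generators induced by $J_0$. Following \cite{Dai}, this amounts to tracking conjugation through the surgery exact triangles and $\spinc$-decorated cobordism maps out of which the Ozsv\'ath-Szab\'o model is assembled. At each stage one uses that conjugation interacts naturally with Heegaard triple maps (by simultaneously swapping the $\alphas$, $\betas$, $\gammas$ labels and reversing the orientation of the Heegaard surface) and with the conjugation action $\s \mapsto \bar\s$ on the $\spinc$ structures labeling the cobordism maps. Composing these compatibilities in the iterative construction yields the desired identification $\inv_* = J_0$ on the resulting $\He^-(R_k)[\sigma+2]$.

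The hard part will be this last step: one must verify that every chain homotopy used in the inductive construction of the lattice model from Heegaard Floer data is compatible with conjugation up to a further chain homotopy, so that the final identification between $\CFm(Y, \s)$ and the lattice complex intertwines $\inv$ with the generator-level $J_0$. This is exactly the technical core of the corresponding argument in \cite{Dai}, and one expects the Heegaard Floer version to proceed along parallel lines, with the Pin(2)-monopole cobordism maps replaced by their Heegaard Floer counterparts and the analogous naturality statements provided by \cite{Naturality} and \cite[Section 2.2]{HMinvolutive}.
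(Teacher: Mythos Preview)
Your high-level idea is right: conjugation on $\spinc$ structures sends $k\mapsto -k$, which on the lattice is exactly $x\mapsto -k-x=J_0x$, and $J_0$ preserves $\chi_k$. But your execution plan misconstrues how the isomorphism \eqref{eq:HFmAR} is actually built, and this leads you to invent a ``hard part'' that does not exist. The Ozsv\'ath--Szab\'o/N\'emethi isomorphism is not assembled from an iterative surgery-triangle construction requiring chain-level bookkeeping. Rather (working with $\HFp(-Y,\s)$ and then dualizing), it sends $x\in\HFp(-Y,\s)$ to the function $\phi_x\colon k\mapsto F^+_{W,k}(x)$ on $\Char_\s(G)$, where $W$ is the plumbing cobordism from $-Y$ to $S^3$; the adjunction relations force $\phi_x$ to land in $\He^+(R_k)[-\sigma-2]$. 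Under this description, $J_0$ acts by precomposing with $k\mapsto -k$, so the assertion $\inv_*=J_0$ reduces to the identity $F^+_{W,k}=F^+_{W,-k}\circ\inv_*$.

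That identity is a single input: \cite[Theorem~3.6]{HolDiskFour} says cobordism maps commute with the conjugation involutions, and since $\HFp(S^3)$ has rank at most one in each degree, $\inv_*$ is the identity there. This gives the result on $\HFp(-Y,\s)$ directly at the level of homology; the passage to $\HFm(Y,\s)$ is then the duality \eqref{eq:dualHF} together with the compatibility of $\inv_*$ with that duality from \cite[Section~4.2]{HMinvolutive}. There is no need to track chain homotopies through an inductive construction or to invoke naturality of triple-diagram maps beyond what is already packaged in the conjugation-equivariance of cobordism maps. Your proposed route, even if it could be made to work, replaces a one-line application of a known functoriality statement with a substantial technical verification.
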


\begin{proof}
We begin by reviewing the construction of the isomorphism between Heegaard Floer homology and lattice homology. For convenience, we work with the plus versions, as in the original picture of \cite{Plumbed, NemethiOS}; that is, we will focus on the isomorphisms \eqref{eq:HFpAR}. 
 
The plumbing graph $G$ gives an oriented cobordism $W$ between $-Y$ and $S^3$. 
Let $\Char_\s(G) \subset \Char(G)$ denote the set of characteristic vectors on $W$ limiting to $\s$ on $-Y$. We denote a typical element of $\Char_\s(G)$ by $k$, with $[k] = \s$ being fixed.

Let $\Tower_0$ be the graded $\ff[U]$-module $\HFp(S^3) = \ff[U, U^{-1}]/U\ff[U]$. For any element $x \in \HFp(-Y, \s)$, define a map $\phi_x: \Char_\s(G) \rightarrow \Tower_0$ by
\[
\phi_x(k) = F^+_{W, k} (x),
\]
where $F^+_{W, k}$ is the Heegaard Floer cobordism map associated to the $\spinc$ structure on $W$ with $c_1= k$. In \cite{Plumbed}, it is shown that for any $x$, the map $\phi_x$ satisfies a set of ``adjunction relations" relating the values of $\phi_x$ on different characteristic vectors in $\Char_\s(G)$. Specifically, 
for every $v \in \Vert(G) \subset H_2(W; \Z)$, if we set $n=(k(v) + \langle v, v \rangle)/2,$ we have
\[
U^n \cdot \phi_x(k + 2PD[v]) = \phi_x(k), \ \text{ for } \ n \geq 0
\]
and
\[
 \phi_x(k + 2PD[v]) = U^{-n} \cdot \phi_x(k), \ \text{ for } \ n < 0.
\]

The set of all maps from $\Char_\s(G)$ to $\Tower_0$ satisfying these relations forms a graded $\ff[U]$-module, which can be identified with the lattice homology $\He^+(R_k)[-\sigma-2]$; cf. Proposition 4.7 in \cite{NemethiOS}. Under this identification, the reflection involution $J_0$ on $R_k$ corresponds to 
precomposing a map $\phi_x \in \mathbb{H}^+(G, \s)$ with the reflection $k \mapsto -k$ on $\Char_\s(G)$. 

Theorem 8.3 in \cite{NemethiOS} says that the correspondence $x \mapsto \phi_x$ effects a graded $\ff[U]$-module isomorphism between $\HFp(-Y, \s)$ and the lattice homology $\He^+(R_k)[-\sigma-2]$, as in \eqref{eq:HFpAR}. 

Unwinding the definitions, to check that $\inv_*$ on $\HFp(-Y, \s)$ corresponds to $J_0$ under these isomorphisms
 is equivalent to proving that
 $$\phi_{x}(k) = \phi_{\inv_*(x)}(-k)$$ for all $x \in \HFp(-Y, \s)$ and $k \in \Char_\s(G)$. In other words, we must establish the equality 
\[
F^+_{W, k} = F^+_{W,-k} \circ \inv_*.
\]
This follows from \cite[Theorem 3.6]{HolDiskFour}, which says that the cobordism maps commute with the conjugation involutions. We are also using here the fact that the involution on $\HFp(S^3)$ is the identity, which is clear because $\HFp_d(S^3)$ has dimension at most one in every degree $d$.

We have thus identified the involution $\inv_*$ on $\HFp(-Y, \s)$ with the reflection symmetry on the graded root. Recall that the plus version of Heegaard Floer homology for $-Y$ is related to the minus version of Heegaard Floer cohomology for $Y$, via the duality isomorphism \eqref{eq:dualHF}. Furthermore, it is proved in \cite[Section 4.2]{HMinvolutive} that the involutions $\inv_*$ are taken to each other under this duality isomorphism. The desired characterization of the involution for the minus version follows from here.
\end{proof}

\begin{remark}
Theorem~\ref{thm:Jequiv} is the analogue in the Heegaard Floer setting of the equivariance established by the first author in \cite{Dai} for the $j$-involution on monopole Floer homology.
\end{remark}

\begin{remark}
In the proof of Theorem~\ref{thm:Jequiv} we made use of Theorem 3.6 from \cite{HolDiskFour}, the 
commutation of cobordism maps with the conjugation involutions. The proof of that theorem relies, in particular, on \cite[Lemma 5.2]{HolDiskFour}, which identifies the cobordism maps induced by left-subordinate and right-subordinate triple Heegaard diagrams. In turn, the proof of \cite[Lemma 5.2]{HolDiskFour} uses the cancellation of two-handle maps with three-handle maps, which is an instance of the invariance statement for cobordism maps (that they depend only on the cobordism, not on the handle decompositions and Heegaard diagrams being used). A complete proof of this invariance statement, based on the arguments of Ozsv\'ath and Szab\'o from \cite{HolDiskFour} and the naturality results of Juh\'asz-Thurston \cite{Naturality}, was given by Zemke in \cite{Zemke2}. Note that, for the plus and minus versions, the cobordism maps do not depend on the choice of a path between the basepoints (whereas for the hat version they do); see \cite[Theorem G]{Zemke2}.
\end{remark}


\section{Standard complexes associated to graded roots}
\label{sec:standard}
In this section, we construct a model chain complex for the involutive Floer homology of an AR plumbed three-manifold. We begin by defining a particularly simple model for $\He^-(R)$, where $R$ is an arbitrary graded root as defined in Section~\ref{sec:lattice}. Let $v_1, v_2, \ldots, v_n$ be the leaves of $R$, enumerated in left-to-right lexicographic order. We also enumerate by $\alpha_1, \alpha_2, \ldots, \alpha_{n-1}$ the $n-1$ upward-opening angles in left-to-right lexicographic order. More precisely, let $v_i$ and $v_{i+1}$ be any two consecutive leaves in our enumeration, so that there are unique paths $\gamma_i$ and $\gamma_{i+1}$ in $R$ that start at $v_i$ and $v_{i+1}$, respectively, and run down the entire length of the infinite stem. These two paths merge at a unique vertex of highest grading, and this vertex supports a unique angle whose sides lie on $\gamma_i$ and $\gamma_{i+1}$, which we denote by $\alpha_i$. The case of $\HFm(\Sigma(2, 7, 15))$ is shown below in Figure \ref{2715root}. 

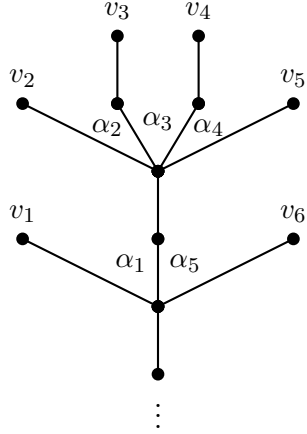
\begin{figure}[h!]
\begin{tikzpicture}[thick,scale=0.9]%
	\node[text width=0.1cm] at (0, -3.5) {\vdots};
	\draw (0, 0) node[circle, draw, fill=black!100, inner sep=0pt, minimum width=4pt] {} -- (0, -1) node[circle, draw, fill=black!100, inner sep=0pt, minimum width=4pt] {};
	\draw (0, -1) node[circle, draw, fill=black!100, inner sep=0pt, minimum width=4pt] {} -- (0, -2) node[circle, draw, fill=black!100, inner sep=0pt, minimum width=4pt] {};
	\draw (0, -2) node[circle, draw, fill=black!100, inner sep=0pt, minimum width=4pt] {} -- (0, -3) node[circle, draw, fill=black!100, inner sep=0pt, minimum width=4pt] {};
	
	\draw (0, 0) node[circle, draw, fill=black!100, inner sep=0pt, minimum width=4pt] {} -- (-0.6, 1) node[circle, draw, fill=black!100, inner sep=0pt, minimum width=4pt] {};
	\draw (0, 0) node[circle, draw, fill=black!100, inner sep=0pt, minimum width=4pt] {} -- (0.6, 1) node[circle, draw, fill=black!100, inner sep=0pt, minimum width=4pt] {};
	
	\draw (-0.6, 1) node[circle, draw, fill=black!100, inner sep=0pt, minimum width=4pt] {} -- (-0.6, 2) node[circle, draw, fill=black!100, inner sep=0pt, minimum width=4pt, label = $v_3$] {};
	\draw (0.6, 1) node[circle, draw, fill=black!100, inner sep=0pt, minimum width=4pt] {} -- (0.6, 2) node[circle, draw, fill=black!100, inner sep=0pt, minimum width=4pt, label = $v_4$] {};	
	
	\draw (0, 0) node[circle, draw, fill=black!100, inner sep=0pt, minimum width=4pt] {} -- (-2, 1) node[circle, draw, fill=black!100, inner sep=0pt, minimum width=4pt, label = $v_2$] {};
	\draw (0, 0) node[circle, draw, fill=black!100, inner sep=0pt, minimum width=4pt] {} -- (2, 1) node[circle, draw, fill=black!100, inner sep=0pt, minimum width=4pt, label = $v_5$] {};
	
	\draw (0, -2) node[circle, draw, fill=black!100, inner sep=0pt, minimum width=4pt] {} -- (-2, -1) node[circle, draw, fill=black!100, inner sep=0pt, minimum width=4pt, label = $v_1$] {};
	\draw (0, -2) node[circle, draw, fill=black!100, inner sep=0pt, minimum width=4pt] {} -- (2, -1) node[circle, draw, fill=black!100, inner sep=0pt, minimum width=4pt, label = $v_6$] {};
	\node[label = $\alpha_3$] at (0,0.3) {};
	\node[label = $\alpha_2$] at (-0.75,0.2) {};
	\node[label = $\alpha_4$] at (0.75,0.2) {};
	\node[label = $\alpha_1$] at (-0.4,-1.8) {};
	\node[label = $\alpha_5$] at (0.4,-1.8) {};
\end{tikzpicture}
\caption{Graded root (with leaf and angle labels) for $\HFm(\Sigma(2, 7, 15))$. \hspace{\textwidth} Uppermost vertices have Maslov grading $-2$; successive rows have grading difference two.}\label{2715root}
\end{figure}

We construct a free and finitely-generated model chain complex with the same homology and $\ff[U]$-structure as $\He^-(R)$. Let $\gr(v_i) = 2\chi(v_i)$ denote the degree of vertex $v_i$, and let $\gr(\alpha_i)$ denote the degree of the vertex supporting angle $\alpha_i$. The generators (over $\ff[U]$) of our complex are given as follows. For each vertex $v_i$, we place a single generator in grading $\gr(v_i)$, which by abuse of notation we also denote by $v_i$. Note that $v_i$ is a generator of our complex over $\ff[U]$, so that as an abelian group we are introducing an entire tower of generators $\ff[U]v_i$. For each angle $\alpha_i$, we similarly place a single generator in grading $\gr(\alpha_i) + 1$, denoting this by $\alpha_i$. This gives a splitting of our complex into chains spanned by $U$-powers of the $v_i$, which we refer to as even chains, and chains spanned by $U$-powers of the $\alpha_i$, which we refer to as odd chains. 

We now define our differential to be identically zero on the even chains, and set
\[
\partial \alpha_i = U^{(\gr(v_i)-\gr(\alpha_i))/2}v_i + U^{(\gr(v_{i+1})-\gr(\alpha_i))/2}v_{i+1}
\]
on the $\alpha_i$, extending to the entire complex linearly and $U$-equivariantly. Note that since $\partial$ takes each $\alpha_i$ to the sum of ($U$-powers of) $v_i$ and $v_{i+1}$, it follows from the fact that the $\alpha_i$ are ordered from $\alpha_1$ to $\alpha_{n-1}$ that $\partial$ is injective on the odd chains in $C_*(R)$. We call this complex the \textit{standard complex associated to $R$} and denote it by $C_*(R)$. The standard complex associated to the graded root given in Figure \ref{2715root} is shown in Figure \ref{figex}. 

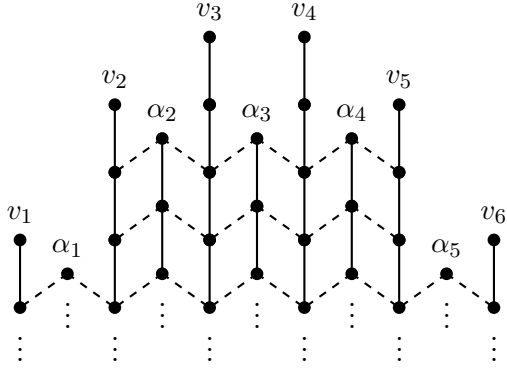
\begin{figure}[h!]
\begin{tikzpicture}[thick,scale=0.9]%

	
	\draw (-0.7, 0) node[circle, draw, fill=black!100, inner sep=0pt, minimum width=4pt, label = $v_3$] {} -- (-0.7, -1) node[circle, draw, fill=black!100, inner sep=0pt, minimum width=4pt] {};
	\draw (-0.7, -1) node[circle, draw, fill=black!100, inner sep=0pt, minimum width=4pt] {} -- (-0.7, -2) node[circle, draw, fill=black!100, inner sep=0pt, minimum width=4pt] {};
	\draw (-0.7, -2) node[circle, draw, fill=black!100, inner sep=0pt, minimum width=4pt] {} -- (-0.7, -3) node[circle, draw, fill=black!100, inner sep=0pt, minimum width=4pt] {};
	\draw (-0.7, -3) node[circle, draw, fill=black!100, inner sep=0pt, minimum width=4pt] {} -- (-0.7, -4) node[circle, draw, fill=black!100, inner sep=0pt, minimum width=4pt] {};
	\node[text width=0.1cm] at (-0.7, -4.5) {\vdots};
	
	\draw (0.7, 0) node[circle, draw, fill=black!100, inner sep=0pt, minimum width=4pt, label = $v_4$] {} -- (0.7, -1) node[circle, draw, fill=black!100, inner sep=0pt, minimum width=4pt] {};
	\draw (0.7, -1) node[circle, draw, fill=black!100, inner sep=0pt, minimum width=4pt] {} -- (0.7, -2) node[circle, draw, fill=black!100, inner sep=0pt, minimum width=4pt] {};
	\draw (0.7, -2) node[circle, draw, fill=black!100, inner sep=0pt, minimum width=4pt] {} -- (0.7, -3) node[circle, draw, fill=black!100, inner sep=0pt, minimum width=4pt] {};
	\draw (0.7, -3) node[circle, draw, fill=black!100, inner sep=0pt, minimum width=4pt] {} -- (0.7, -4) node[circle, draw, fill=black!100, inner sep=0pt, minimum width=4pt] {};
	\node[text width=0.1cm] at (0.7, -4.5) {\vdots};
	
	\draw (-2.1, -1) node[circle, draw, fill=black!100, inner sep=0pt, minimum width=4pt, label = $v_2$] {} -- (-2.1, -2) node[circle, draw, fill=black!100, inner sep=0pt, minimum width=4pt] {};
	\draw (-2.1, -2) node[circle, draw, fill=black!100, inner sep=0pt, minimum width=4pt] {} -- (-2.1, -3) node[circle, draw, fill=black!100, inner sep=0pt, minimum width=4pt] {};
	\draw (-2.1, -3) node[circle, draw, fill=black!100, inner sep=0pt, minimum width=4pt] {} -- (-2.1, -4) node[circle, draw, fill=black!100, inner sep=0pt, minimum width=4pt] {};
	\node[text width=0.1cm] at (-2.1, -4.5) {\vdots};
	
	\draw (2.1, -1) node[circle, draw, fill=black!100, inner sep=0pt, minimum width=4pt, label = $v_5$] {} -- (2.1, -2) node[circle, draw, fill=black!100, inner sep=0pt, minimum width=4pt] {};
	\draw (2.1, -2) node[circle, draw, fill=black!100, inner sep=0pt, minimum width=4pt] {} -- (2.1, -3) node[circle, draw, fill=black!100, inner sep=0pt, minimum width=4pt] {};
	\draw (2.1, -3) node[circle, draw, fill=black!100, inner sep=0pt, minimum width=4pt] {} -- (2.1, -4) node[circle, draw, fill=black!100, inner sep=0pt, minimum width=4pt] {};
	\node[text width=0.1cm] at (2.1, -4.5) {\vdots};
	
	\draw (-3.5, -3) node[circle, draw, fill=black!100, inner sep=0pt, minimum width=4pt, label = $v_1$] {} -- (-3.5, -4) node[circle, draw, fill=black!100, inner sep=0pt, minimum width=4pt] {};
	\node[text width=0.1cm] at (-3.5, -4.5) {\vdots};
	
	\draw (3.5, -3) node[circle, draw, fill=black!100, inner sep=0pt, minimum width=4pt, label = $v_6$] {} -- (3.5, -4) node[circle, draw, fill=black!100, inner sep=0pt, minimum width=4pt] {};
	\node[text width=0.1cm] at (3.5, -4.5) {\vdots};
	

	\draw (0, -1.5) node[circle, draw, fill=black!100, inner sep=0pt, minimum width=4pt, label = $\alpha_3$] {} -- (0, -2.5) node[circle, draw, fill=black!100, inner sep=0pt, minimum width=4pt] {};
	\draw (0, -2.5) node[circle, draw, fill=black!100, inner sep=0pt, minimum width=4pt] {} -- (0, -3.5) node[circle, draw, fill=black!100, inner sep=0pt, minimum width=4pt] {};
	\node[text width=0.1cm] at (0, -4) {\vdots};	
	
	\draw (0, -1.5)  -- (-0.7, -2) [dashed];
	\draw (0, -1.5)  -- (0.7, -2) [dashed];
	\draw (0, -2.5)  -- (-0.7, -3) [dashed];
	\draw (0, -2.5)  -- (0.7, -3) [dashed];
	\draw (0, -3.5)  -- (-0.7, -4) [dashed];
	\draw (0, -3.5)  -- (0.7, -4) [dashed];

	\draw (-1.4, -1.5) node[circle, draw, fill=black!100, inner sep=0pt, minimum width=4pt, label = $\alpha_2$] {} -- (-1.4, -2.5) node[circle, draw, fill=black!100, inner sep=0pt, minimum width=4pt] {};
	\draw (-1.4, -2.5) node[circle, draw, fill=black!100, inner sep=0pt, minimum width=4pt] {} -- (-1.4, -3.5) node[circle, draw, fill=black!100, inner sep=0pt, minimum width=4pt] {};
	\node[text width=0.1cm] at (-1.4, -4) {\vdots};	
	
	\draw (-1.4, -1.5)  -- (-2.1, -2) [dashed];
	\draw (-1.4, -1.5)  -- (-0.7, -2) [dashed];
	\draw (-1.4, -2.5)  -- (-2.1, -3) [dashed];
	\draw (-1.4, -2.5)  -- (-0.7, -3) [dashed];
	\draw (-1.4, -3.5)  -- (-2.1, -4) [dashed];
	\draw (-1.4, -3.5)  -- (-0.7, -4) [dashed];

		\draw (1.4, -1.5) node[circle, draw, fill=black!100, inner sep=0pt, minimum width=4pt, label = $\alpha_4$] {} -- (1.4, -2.5) node[circle, draw, fill=black!100, inner sep=0pt, minimum width=4pt] {};
	\draw (1.4, -2.5) node[circle, draw, fill=black!100, inner sep=0pt, minimum width=4pt] {} -- (1.4, -3.5) node[circle, draw, fill=black!100, inner sep=0pt, minimum width=4pt] {};
	\node[text width=0.1cm] at (1.4, -4) {\vdots};	
	
	\draw (1.4, -1.5)  -- (2.1, -2) [dashed];
	\draw (1.4, -1.5)  -- (0.7, -2) [dashed];
	\draw (1.4, -2.5)  -- (2.1, -3) [dashed];
	\draw (1.4, -2.5)  -- (0.7, -3) [dashed];
	\draw (1.4, -3.5)  -- (2.1, -4) [dashed];
	\draw (1.4, -3.5)  -- (0.7, -4) [dashed];

	\draw (-2.8, -3.5) node[circle, draw, fill=black!100, inner sep=0pt, minimum width=4pt, label = $\alpha_1$] {};
	\node[text width=0.1cm] at (-2.8, -4) {\vdots};
	\draw (-2.8, -3.5)  -- (-2.1, -4) [dashed];
	\draw (-2.8, -3.5)  -- (-3.5, -4) [dashed];

	\draw (2.8, -3.5) node[circle, draw, fill=black!100, inner sep=0pt, minimum width=4pt, label = $\alpha_5$] {};
	\node[text width=0.1cm] at (2.8, -4) {\vdots};
	\draw (2.8, -3.5)  -- (2.1, -4) [dashed];
	\draw (2.8, -3.5)  -- (3.5, -4) [dashed];

\end{tikzpicture}
\caption{Standard complex associated to the $\ff[U]$-module $\HFm(\Sigma(2, 7, 15))$. \hspace{\textwidth}Solid lines represent the action of $U$; dashed lines represent the action of $\partial$.}
\label{figex}
\end{figure}

It is easily verified that $C_*(R)$ replicates the homology and $\ff[U]$-structure given by $\He^-(R)$. Roughly speaking, the idea is that $R$ may be formed by taking $n$ infinite disjoint strands corresponding to the paths $\gamma_1, \gamma_2, \ldots, \gamma_n$ and gluing together each pair $\gamma_i$ and $\gamma_{i+1}$ below some grading determined by the branch points of $R$. On the side of $C_*(R)$, this corresponds to the fact that generators in the tower $\ff[U]v_i$ are in bijection with the vertices of $\gamma_i$; the $\alpha_i$ serve to ``glue together" (on the level of homology) generators in different towers. The reader may find it useful to compare Figures \ref{2715root} and \ref{figex}. A precise version of this argument is given below.
\begin{lemma}\label{lemgraded1}
Let $R$ be a graded root. Then the homology $H_*(R)$ of the chain complex $C_*(R)$ is isomorphic to $\He^-(R)$ as a $\ff[U]$-module.
\end{lemma}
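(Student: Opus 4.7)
The plan is to define an explicit $\ff[U]$-linear chain map $\Phi \co C_*(R) \to \He^-(R)$ and verify that it induces an isomorphism. Define $\Phi$ on generators by sending each even generator $v_i \in C_*(R)$ to the corresponding leaf $v_i \in \Vert(R) \subset \He^-(R)$, and sending each odd generator $\alpha_i$ to $0$; extend by $\ff[U]$-linearity. By construction $\Phi$ is grading-preserving. To check that $\Phi$ is a chain map, I only need $\Phi(\partial \alpha_i) = 0$; unpacking, this amounts to the equality
\[
U^{(\gr(v_i) - \gr(\alpha_i))/2}\, v_i \; = \; U^{(\gr(v_{i+1}) - \gr(\alpha_i))/2}\, v_{i+1}
\]
in $\He^-(R)$, which holds because both sides correspond to the common merge vertex of $\gamma_i$ and $\gamma_{i+1}$ at grading $\gr(\alpha_i)$, and $\He^-(R)$ has a unique generator per vertex.

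Next I would compute $\ker \partial$ and reduce the lemma to a statement about $\Phi$ on even chains. Since $\partial \equiv 0$ on even chains, it suffices to show $\partial$ is injective on odd chains. Writing an arbitrary odd chain as $\sum_i q_i(U)\alpha_i$ and reading off the $v_j$-coefficient of its boundary gives $q_{j-1}(U) U^{b_{j-1}} + q_j(U) U^{a_j}$ (with the conventions $q_0 = q_n = 0$), so setting $j = 1, 2, \dots$ in turn inductively forces each $q_i = 0$ using that $\ff[U]$ is an integral domain. Hence $\ker\partial = \bigoplus_i \ff[U]\, v_i$, and
\[
H_*(C_*(R)) \; = \; \Bigl(\bigoplus_i \ff[U]\, v_i\Bigr)\big/\im\partial.
\]

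It remains to prove $\Phi$ restricted to even chains descends to an isomorphism onto $\He^-(R)$. Surjectivity is immediate: every vertex $w \in \Vert(R)$ lies on some path $\gamma_i$, and $\Phi(U^{(\gr(v_i) - \gr(w))/2} v_i) = w$. For injectivity, the main task is identifying $\ker\Phi|_{\mathrm{even}}$ with $\im\partial$. The containment $\im\partial \subseteq \ker\Phi$ is clear from the chain map check. For the reverse, I would use the fact that two expressions $U^{k_i} v_i$ and $U^{k_j} v_j$ (with $i < j$) represent the same vertex of $R$ precisely when both land at grading $d \leq \gr(p_{ij})$, where $p_{ij}$ is the merge vertex of $\gamma_i$ and $\gamma_j$; combined with the identity
\[
\gr(p_{ij}) \; = \; \min_{i \leq k < j} \gr(\alpha_k),
\]
which follows from a short induction on $j - i$ using the graded root axioms, this allows me to telescope any such relation as a sum (mod $2$) of the consecutive relations $\partial(U^{c_k} \alpha_k)$ for $k = i, \dots, j-1$.

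The main obstacle to watch is keeping track of grading shifts so that all telescoping exponents $c_k$ remain non-negative; this is precisely guaranteed by $d \leq \gr(\alpha_k)$ for each $k$ in the chain, which follows from the minimum formula above. Once this telescoping is established, the two generating sets for $\ker\Phi|_{\mathrm{even}}$ and $\im\partial$ coincide, so $\Phi$ induces an $\ff[U]$-module isomorphism $H_*(C_*(R)) \xrightarrow{\cong} \He^-(R)$, as desired.
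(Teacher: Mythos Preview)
Your proposal is correct and follows essentially the same approach as the paper's own proof: both define the map sending $U^n v_i$ to the vertex on $\gamma_i$ at distance $n$ from $v_i$, check surjectivity by noting every vertex lies on some $\gamma_i$, and identify the kernel with $\im\partial$. Your version is in fact somewhat more detailed than the paper's---you give an explicit inductive argument for injectivity of $\partial$ on odd chains (the paper simply asserts this), and your telescoping via the identity $\gr(p_{ij}) = \min_{i \le k < j}\gr(\alpha_k)$ makes precise a step the paper leaves implicit when it says the kernel is ``spanned precisely by sums of the form $\partial U^c\alpha_i$.''
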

\begin{proof}
First observe that since $\partial$ is injective on the odd part of $C_*(R)$, the homology $H_*(R)$ is supported only in even degrees. Thus, consider an even chain of the form $U^nv_i$. This is in fact a cycle, since $\partial$ is zero on even chains. Let $\gamma_i$ be the unique path in $R$ that starts at $v_i$ and runs down the infinite stem of $R$. Our proposed isomorphism takes the homology class of $U^nv_i$ in $C_*(R)$ to the generator of $\He^-(R)$ corresponding to the unique vertex in $R$ that lies on $\gamma_i$ and is distance $n$ from $v_i$. (That is, there is a sub-path of $\gamma_i$ consisting of $n$ edges which joins $v_i$ and the desired vertex.) Said differently, we map the homology class of $v_i$ in $C_*(R)$ to the element of $\He^-(R)$ corresponding to the vertex $v_i$ in $R$, and extend $U$-equivariantly to generators in the tower $\ff[U]v_i$. 

In order to check that this is well-defined, we must check that if $U^av_i$ and $U^bv_j$ are homologous in $C_*(R)$, then the above procedure yields the same vertex in $R$ for both. It suffices to establish the case when $j = i + 1$ and the two cycles $U^av_i$ and $U^bv_{i+1}$ are homologous via a single boundary element $\partial U^c\alpha_i$, since in general we simply sum together such cases. By definition, the existence of $\alpha_i$ means that the paths $\gamma_i$ and $\gamma_{i+1}$ coincide in all gradings less than or equal to $\gr(\alpha_i)$. Since the gradings of the two cycles in question are given by $\gr(\alpha_i) - 2c$ with $c \geq 0$, this proves the claim. 

Our correspondence is clearly equivariant with respect to the action of $U$, and we may extend it linearly to all of $C_*(R)$ to obtain a $\ff[U]$-module map from $H_*(R)$ to $\He^-(R)$. It is obviously surjective, since any vertex in $R$ lies on a path from some leaf to the infinite stem. Moreover, as a map defined on the cycles of $C_*(R)$, our correspondence has kernel spanned precisely by sums of the form $\partial U^c\alpha_i = U^av_i + U^bv_{i+1}$, as above. This completes the proof.
\end{proof}

Now suppose that $R$ is a symmetric graded root. There is an obvious involution on $C_*(R)$ given by sending $v_i$ to $v_{n-i+1}$, $\alpha_i$ to $\alpha_{n-i}$, and extending linearly and $U$-equivariantly. Since this evidently induces the reflection map on $H_*(R) \cong \He^-(R)$ described in Section~\ref{sec:lattice}, by a slight abuse of notation we similarly denote this involution and its induced action on homology by $J_0$. We claim that $J_0$ is standard in the sense that any other chain map on $C_*(R)$ which induces the reflection involution on $H_*(R)$ must be chain homotopic to it. In general, of course, two maps that induce the same map on homology need not be chain homotopic, but in our case the extreme simplicity of $C_*(R)$ allows us to tautologically construct the chain homotopy:
\begin{lemma}\label{lemgraded2}
Let $J_0$ be the involution on $C_*(R)$ given by sending $v_i$ to $v_{n-i+1}$, $\alpha_i$ to $\alpha_{n-i}$, and extending linearly and $U$-equivariantly. Any other chain map $J$ on $C_*(R)$ which induces the reflection involution on $H_*(R) \cong \He^-(R)$ is chain homotopic to $J_0$.
\end{lemma}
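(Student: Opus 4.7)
The plan is to set $D := J + J_0$ and show that $D$ is null-homotopic as an $\ff[U]$-equivariant, grading-preserving endomorphism of $C_*(R)$; since we work over $\ff_2$, this is equivalent to $J \simeq J_0$. Two structural features of the standard complex drive the argument: (i) the grading-parity splitting $C_*(R) = C^{\mathrm{ev}} \oplus C^{\mathrm{odd}}$, where $C^{\mathrm{ev}}$ is spanned by the $\ff[U]$-translates of the $v_i$ and $C^{\mathrm{odd}}$ by those of the $\alpha_j$; and (ii) the fact, visible from the proof of Lemma~\ref{lemgraded1}, that $\partial$ vanishes identically on $C^{\mathrm{ev}}$, while $\partial \co C^{\mathrm{odd}} \to C^{\mathrm{ev}}$ is injective with image exactly the kernel of the surjection $C^{\mathrm{ev}} \to \He^-(R)$.

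For each leaf generator $v_i$, the element $D(v_i)$ automatically lies in $C^{\mathrm{ev}}$ and is a cycle. Since by hypothesis $D$ induces zero on $H_*(R)$, the class $[D(v_i)]$ vanishes in $\He^-(R)$, so by (ii) I can choose $e_i \in C^{\mathrm{odd}}$ with $\partial e_i = D(v_i)$. I would then define a degree $+1$, $\ff[U]$-equivariant map $K$ on $C_*(R)$ by $K(v_i) := e_i$ and $K(\alpha_j) := 0$. On every even generator the chain homotopy identity $\partial K + K \partial = D$ is immediate, since $\partial v_i = 0$ and $\partial e_i = D(v_i)$.

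The only substantive point remaining is the relation on each $\alpha_j$: with $K(\alpha_j) = 0$ this reduces to $D(\alpha_j) = K(\partial \alpha_j) = U^{a_j} e_j + U^{b_j} e_{j+1}$, where $\partial \alpha_j = U^{a_j} v_j + U^{b_j} v_{j+1}$. Both sides lie in $C^{\mathrm{odd}}$, and by (ii) the map $\partial$ is injective there, so it suffices to check equality after applying $\partial$. Using $\partial D = D \partial$ on the left and $\partial e_i = D(v_i)$ on the right, both sides collapse to $U^{a_j} D(v_j) + U^{b_j} D(v_{j+1})$, and the identity holds. The only cleverness needed is this injectivity-of-$\partial$ trick; everything else is forced by the combinatorial simplicity of $C_*(R)$.
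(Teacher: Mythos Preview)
Your proof is correct and follows essentially the same approach as the paper's: set the homotopy to zero on the odd generators, define it on even generators by taking the (unique, by injectivity of $\partial$ on $C^{\mathrm{odd}}$) preimage of $D(v_i)$ under $\partial$, and then verify the homotopy relation on odd generators by applying $\partial$ and invoking that same injectivity. The only differences are cosmetic---you package $J+J_0$ as $D$ and define $K$ on the free generators before extending, whereas the paper defines $H$ directly on all even chains and notes that uniqueness of the preimage forces $\ff[U]$-linearity.
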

\begin{proof}
We construct a chain homotopy $H$ between $J$ and $J_0$ as follows. Let $v$ be an even chain in $C_*(R)$. Since $v$ is automatically a cycle and $J$ and $J_0$ induce the same map on homology, we know that $Jv$ and $J_0v$ differ by a boundary. Because $\partial$ has zero kernel on the odd part of $C_*(R)$, this boundary has a unique preimage under $\partial$, which we denote by $Hv$. Thus we have $Jv + J_0v = \partial Hv$. Note that since $J$ and $J_0$ are linear and $U$-equivariant, the uniqueness of $Hv$ shows that $H$ is a linear and $U$-equivariant map from the even chains to the odd chains of $C_*(R)$. We define our chain homotopy to be equal to $H$ on the even part of $C_*(R)$ and identically zero on the odd part of $C_*(R)$.  

Because $\partial$ is identically zero on the even part of $C_*(R)$, the above equality establishes the desired chain homotopy identity on even chains. Thus, let $\alpha$ be an odd chain. Putting $v = \partial \alpha$ yields the equality $J\partial \alpha + J_0\partial \alpha = \partial H\partial \alpha$. Since $J$ and $J_0$ are chain maps, the left-hand side is equal to $\partial(J\alpha + J_0\alpha)$, and since $\partial$ has zero kernel on the odd part of $C_*(R)$, this shows that $J\alpha + J_0\alpha = H\partial \alpha$. Having defined $H$ to be zero on odd chains, this establishes the chain homotopy identity for odd chains, completing the proof.
\end{proof}

\begin{remark}
In the above proof, we did not actually use the fact that $J$ or $J_0$ induced the reflection involution. More generally, let $f$ and $g$ be two chain maps between standard complexes of graded roots. Then if $f$ and $g$ coincide on the level of homology, the above construction shows that $f$ and $g$ are chain homotopic. 
\end{remark}

Now let $Y$ be an AR plumbed three-manifold and $\s = [k]$ be a self-conjugate $\spinc$ structure on $Y$. For convenience, denote by $R$ the (shifted) graded root $R_k[\sigma+2]$, so that $\HFm(Y, \s) \cong \He^-(R)$. We claim that the pairs $(\CFm(Y, \s), \inv)$ and $(C_*(R), J_0)$ are equivalent $\inv$-complexes. We begin by first showing that the complexes $\CFm(Y, \s)$ and $C_*(R)$ are homotopy equivalent over $\ff[U]$. This is a consequence of the following standard algebraic lemma, whose proof is just a re-phrasing of the fact that any two free resolutions of the same module are homotopy equivalent.

\begin{lemma}\label{lemequiv1}
Let $C_1$ and $C_2$ be two chain complexes which are free over a principal ideal domain. If $C_1$ and $C_2$ have the same homology, then they are (non-canonically) homotopy equivalent.
\end{lemma}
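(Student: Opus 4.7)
The proof is an adaptation of the classical fact that any two free resolutions of a module over a PID are chain homotopy equivalent. The plan proceeds in two steps: first, construct chain maps realizing the homology isomorphism in both directions; second, show their compositions are homotopic to the identities.

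First, I would build a chain map $f: C_1 \to C_2$ inducing the specified isomorphism $H_*(C_1) \to H_*(C_2)$. Fix a free basis for $C_1$ and define $f$ on basis elements by induction on degree. The defining property is that $\partial f(x) = f(\partial x)$ (which makes sense since $\partial x$ lies in lower degree, where $f$ is already defined), together with the condition that, for cycles $x$, the element $f(x)$ represents the class prescribed by the homology isomorphism. Over a PID, submodules of free modules are free, so the cycle modules $Z_n(C_2)$ and the boundary modules $B_n(C_2)$ are free; this together with the freeness of $C_2$ itself allows us to lift cycles to cycles and to find preimages under $\partial$ whenever the inductive step requires it. By symmetry, I would construct $g: C_2 \to C_1$ inducing the inverse isomorphism.

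Second, I would show that $g \circ f$ and $f \circ g$ are chain homotopic to the identities on $C_1$ and $C_2$. This reduces to the more general statement that any chain endomorphism $\phi$ of a free chain complex $C$ over a PID which induces the identity on homology is chain homotopic to $\operatorname{id}_C$. This is proved by a parallel lifting argument: one constructs a chain homotopy $H$ term by term on a free basis, using freeness of $C$ and of the cycle/boundary submodules of $C$ to solve the equation $\phi - \operatorname{id} = \partial H + H\partial$ inductively.

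The main obstacle is purely bookkeeping: the induction must be set up so that lifts can be chosen consistently, which requires some form of boundedness or finite generation. In the applications, both $\CFm(Y,\s)$ and $C_*(R)$ are finitely generated and free over the graded PID $\ff[U]$, so the inductive construction terminates after finitely many generators; once this boundedness is granted, the lemma reduces entirely to formal homological algebra and no further geometric input is needed.
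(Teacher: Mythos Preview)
Your approach is correct, but it differs from the paper's in a way worth noting. The paper does not build $f$ by induction on degree. Instead, it observes that over a PID the submodules $Z_i = \ker\partial_i$ and $B_i \cong \im\partial_i$ are free, so one may choose a splitting $C_i = Z_i \oplus B_i$ all at once; this exhibits the two-term complex $0 \to B_i \xrightarrow{\partial_i} Z_i \to 0$ as a free resolution of $H_i$. The chain map and the chain homotopies then come directly from the comparison theorem for projective resolutions applied to these two-step resolutions, with no recursion on degree at all. The advantage of the paper's route is that it sidesteps the boundedness issue entirely: your inductive construction needs a place to start, and as you correctly flag, $\CFm$ is not bounded below as a $\Z$-graded complex (it has infinite $U$-tails), so the induction must be rephrased in terms of the finite $\ff[U]$-generating set rather than the integer grading. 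That rephrasing works, but the splitting argument simply avoids the problem. A minor expository point: in your first step, basis elements of $C_1$ are not in general cycles, so the clause ``for cycles $x$, $f(x)$ represents the prescribed class'' does not literally apply to them; you would need to choose a basis adapted to a splitting $C_n = Z_n \oplus (\text{complement})$ and treat the two pieces separately, which is essentially what the paper's argument makes explicit.
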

\begin{proof}
Let $H_1$ and $H_2$ be the homologies of $(C_1, \partial_1)$ and $(C_2, \partial_2)$, respectively. Because submodules of free modules over a principal ideal domain are free, we may choosing splittings $C_1 = Z_1 \oplus B_1$ and $C_2 = Z_2 \oplus B_2$, where $Z_i = \ker \partial_i \subseteq C_i$ and the $B_i$ map isomorphically onto $\im \partial_i \subseteq C_i$ via $\partial_i$. These fit into free resolutions
\[
0 \rightarrow B_i \xrightarrow{\partial_i} Z_i \rightarrow H_i \rightarrow 0
\]
for $i = 1, 2$. If we fix an isomorphism $\phi$ between $H_0$ and $H_1$, then a standard diagram chase allows us to extend $\phi$ to a commutative diagram
\[
\begin{tikzpicture}[description/.style={fill=white,inner sep=2pt}]
\matrix (m) [matrix of math nodes, row sep=2.5em,
column sep=3em, text height=1.5ex, text depth=0.25ex]
{ 
0 & B_1 & Z_1 & H_1 & 0\\
0 & B_2 & Z_2 & H_2 & 0\\
};
\path[->,font=\scriptsize]
(m-1-1) edge node[auto] {} (m-1-2)
(m-1-2) edge node[auto] {$\partial_1$} (m-1-3)
(m-1-3) edge node[auto] {} (m-1-4)
(m-1-4) edge node[auto] {} (m-1-5)

(m-2-1) edge node[auto] {} (m-2-2)
(m-2-2) edge node[auto] {$\partial_2$} (m-2-3)
(m-2-3) edge node[auto] {} (m-2-4)
(m-2-4) edge node[auto] {} (m-2-5)

(m-1-2) edge node[auto] {$g$} (m-2-2)
(m-1-3) edge node[auto] {$f$} (m-2-3)
(m-1-4) edge node[auto] {$\phi$} (m-2-4);
\end{tikzpicture}
\]
This defines a map $f \oplus g: C_1 \rightarrow C_2$. After constructing a similar diagram for $\phi^{-1}$, the usual proof that any two free resolutions of the same module are homotopy equivalent shows that $f \oplus g$ is a homotopy equivalence. Moreover, an examination of the maps constructed shows that if $H_1$ and $H_2$ are graded and $\phi$ preserves the grading, then the maps $f$ and $g$ can also be chosen to preserve grading, and all chain homotopy maps increase grading by one. This completes the proof.
\end{proof}

Since $\CFm(Y, \s)$ and $C_*(R)$ are free over $\ff[U]$, Lemmas \ref{lemgraded1} and \ref{lemequiv1} imply that they are homotopy equivalent. Moreover, we may choose this homotopy equivalence so that the induced isomorphism on homology coincides with the lattice homology isomorphism between $H_*(R) \cong \He^-(R)$ and $\HFm(Y, \s)$.  However, if we want to take into account the structure of $\CFm(Y, \s)$ and $C_*(R)$ as $\inv$-complexes, we should view them as complexes over the free polynomial ring in two variables $\ff[U, \inv]$. Because our complexes are not free over $\ff[U, \inv]$ (nor indeed is $\ff[U, \inv]$ a principal ideal domain), we cannot apply Lemma~\ref{lemequiv1} to identify $J_0$ with $\inv$ on the chain level. Instead, let the homotopy equivalence between $C_*(R)$ and $\CFm(Y, \s)$ be given by the pair of maps $F: C_*(R) \rightarrow \CFm(Y, \s)$ and $G: \CFm(Y, \s) \rightarrow C_*(R)$. Then we can pull back $\inv$ to a map
$$ J = G \circ \inv \circ F: C_*(R) \rightarrow C_*(R). $$
By construction, the pairs $(C_*(R), J)$ and $(\CFm(Y, \s), \inv)$ are equivalent in the sense of Definition~\ref{def:E}. Moreover, according to Theorem~\ref{thm:Jequiv} the homological action of $J$ on $H_*(R)$ is given by the reflection involution, since $F$ and $G$ on the homological level coincide with the lattice homology isomorphism $\He^-(R) \cong \HFm(Y, \s)$. Since this is also the action of $J_0$, Lemma~\ref{lemgraded2} implies that $J$ and $J_0$ are chain homotopic. Hence we have:

\begin{theorem}\label{thm:standard}
Let $Y=Y(G)$ be the plumbed three-manifold associated to an AR graph $G$, oriented as the boundary of the plumbing. Let $\s=[k]$ be a self-conjugate $\spinc$ structure on $Y$. Let also $R_k$ be the graded root associated to $(G, k)$, and let $J_0$ be the reflection involution on $C_*(R_k)[\sigma + 2]$. Then, the pairs $(\CFm(Y, \s), \inv)$ and $(C_*(R_k)[\sigma+2], J_0)$ are equivalent $\inv$-complexes.
\end{theorem}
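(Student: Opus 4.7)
The plan is to assemble the result from the lemmas already in this section together with Theorem~\ref{thm:Jequiv}. Write $R = R_k[\sigma+2]$ for brevity. First, observe that both $\CFm(Y,\s)$ and $C_*(R)$ are finitely generated free chain complexes over the principal ideal domain $\ff[U]$. By Lemma~\ref{lemgraded1} we have $H_*(C_*(R))\cong \He^-(R)$, and by Theorem~\ref{thm:latticeisom} we have $\HFm(Y,\s)\cong \He^-(R)$ (with appropriate grading shift). Thus the two complexes have isomorphic homology as graded $\ff[U]$-modules, and Lemma~\ref{lemequiv1} produces a homotopy equivalence realized by a pair of grading-preserving $\ff[U]$-equivariant maps
\[
F\co C_*(R)\to \CFm(Y,\s), \qquad G\co \CFm(Y,\s)\to C_*(R),
\]
which I choose so that the induced isomorphism $F_* = G_*^{-1}$ on homology is the lattice homology isomorphism of Theorem~\ref{thm:latticeisom}.

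Next, transport the conjugation involution across this equivalence by defining
\[
J := G \circ \inv \circ F \co C_*(R)\to C_*(R).
\]
Since $F$ and $G$ are $\ff[U]$-equivariant homotopy inverses and $\inv$ is a grading-preserving chain map squaring to the identity up to chain homotopy, the pair $(C_*(R), J)$ is, by construction, equivalent to $(\CFm(Y,\s),\inv)$ in the sense of Definition~\ref{def:E}.

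It then remains to identify $J$ with the model involution $J_0$ up to chain homotopy. For this, compute the action of $J$ on $H_*(C_*(R))\cong \He^-(R)$: since $F$ and $G$ realize the lattice homology isomorphism on homology, $J_*$ is conjugate to $\inv_*$, and by Theorem~\ref{thm:Jequiv} this is precisely the reflection involution on $\He^-(R)$. Since $J_0$ also induces the reflection involution on homology by construction, Lemma~\ref{lemgraded2} guarantees that $J$ and $J_0$ are chain homotopic as $\ff[U]$-equivariant maps. Consequently $(C_*(R),J)$ and $(C_*(R),J_0)$ are equivalent $\inv$-complexes, and combining this with the previous paragraph and transitivity of the equivalence relation yields the theorem.

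The argument is structurally straightforward; the real content all lies earlier in the section, so I do not expect a genuine obstacle here. The one subtle point to verify carefully is that the homotopy equivalence produced by Lemma~\ref{lemequiv1} can be chosen to be grading-preserving and to realize the specified isomorphism on homology (rather than just some isomorphism); this was explicitly observed in the proof of Lemma~\ref{lemequiv1} and is what makes Theorem~\ref{thm:Jequiv} directly applicable to identify $J_*$ with the reflection involution.
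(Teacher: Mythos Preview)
Your proof is correct and follows essentially the same approach as the paper: construct a homotopy equivalence via Lemmas~\ref{lemgraded1} and~\ref{lemequiv1}, transport $\inv$ to $J = G\circ\inv\circ F$, identify $J_*$ with the reflection using Theorem~\ref{thm:Jequiv}, and then apply Lemma~\ref{lemgraded2} to conclude $J\simeq J_0$. Your explicit remark about choosing the homotopy equivalence to realize the lattice homology isomorphism on homology matches the paper's own observation and is indeed the one point requiring care.
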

\begin{proof}
As observed above, the pairs $(C_*(R), J)$ and $(\CFm(Y, \s), \inv)$ are equivalent $\inv$-complexes by Lemmas~\ref{lemgraded1} and \ref{lemequiv1}, together with the construction of $J$. The maps $J$ and $J_0$ are moreover chain homotopic as maps from $C_*(R)$ to itself by Theorem~\ref{thm:Jequiv} and Lemma~\ref{lemgraded2}. This shows that the pairs $(C_*(R), J)$ and $(C_*(R), J_0)$ are equivalent $\inv$-complexes (taking the maps $F$ and $G$ in Definition~\ref{def:E} to be the identity).
\end{proof}

We finally obtain the desired result:
\begin{corollary}\label{lemequiv2}
The involutive complexes associated to the pairs $(\CFm(Y, \s), \inv)$ and $(C_*(R_k)[\sigma+2], J_0)$ are quasi-isomorphic. 
\end{corollary}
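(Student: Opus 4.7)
The plan is to deduce this as an immediate consequence of the two results already in hand: Theorem~\ref{thm:standard} and Lemma~\ref{lem:EquivCones}. Theorem~\ref{thm:standard} asserts that $(\CFm(Y,\s), \inv)$ and $(C_*(R_k)[\sigma+2], J_0)$ are equivalent as $\inv$-complexes in the sense of Definition~\ref{def:E}. Lemma~\ref{lem:EquivCones} then says that any equivalence of $\inv$-complexes induces a quasi-isomorphism between the mapping cones defining their respective involutive complexes \eqref{eq:invcx}. Composing these two statements yields the desired conclusion, so no further work is required beyond invoking them.

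If one wanted to expand this into a self-contained argument, the step to unpack would be the proof of Lemma~\ref{lem:EquivCones}: given the data of an equivalence $F\co C \to C'$, $G \co C' \to C$ with $F\circ \inv \simeq \inv' \circ F$, one defines a map between the mapping cones \eqref{eq:invcx} using $F$ on the domain summand, $Q\cdot F$ on the target summand, and a term built out of the chain homotopy between $F\circ \inv$ and $\inv'\circ F$ as the off-diagonal component. A standard filtration by powers of $Q$ (here just the two-step filtration $Q\cdot C \subset \Cone$) reduces the check that this is a quasi-isomorphism to the statement that $F$ and $G$ are already chain homotopy equivalences on the underlying Floer complexes, which is given. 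The only point requiring care is gradings: one must verify that the grading shift by $-1$ attached to $Q$ is respected, but this is automatic from the fact that $F$ and $G$ are grading-preserving.

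The main obstacle was really subsumed into the earlier sections: the hard work was identifying $\inv_*$ with $J_0$ on homology (Theorem~\ref{thm:Jequiv}) and promoting this to a chain-level equivalence via Lemmas~\ref{lemgraded1}, \ref{lemgraded2}, and \ref{lemequiv1}. Once those are in place, the corollary is a formal consequence, so the proof reduces to citing Theorem~\ref{thm:standard} and Lemma~\ref{lem:EquivCones}.
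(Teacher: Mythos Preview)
Your proposal is correct and follows exactly the same approach as the paper: the paper's proof is the single sentence ``This follows immediately from Lemma~\ref{lem:EquivCones} and Theorem~\ref{thm:standard},'' which is precisely what you invoke. The additional paragraphs unpacking Lemma~\ref{lem:EquivCones} and summarizing the earlier ingredients are fine as expository context but not needed for the proof itself.
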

\begin{proof}
This follows immediately from Lemma~\ref{lem:EquivCones} and Theorem~\ref{thm:standard}. 
\end{proof}


\section{The involutive Heegaard Floer homology of AR plumbed three-manifolds}
\label{sec:hfiAR}
We now prove Theorem~\ref{thm:main}. Note that since $J_0$ is equivariant with respect to the action of $U$, the expressions\footnote{Recall that we use the same notation $J_0$ to denote the involution on the complex $C_*(R)$ and on its homology. Whenever we write   $\ker (1 + J_0)$, $\im (1 + J_0)$ or $\coker (1 + J_0)$, we refer to the action on homology.} $\ker (1 + J_0)$ and $\coker (1 + J_0)$ inherit graded $\ff[U]$-module structures from $\He^-(R_k)[\sigma+2]$. 

\begin{proof}[Proof of Theorem~\ref{thm:main}]
Denote by $R$ the (shifted) graded root $R_k[\sigma+2]$. By Corollary~\ref{lemequiv2}, the involutive homology of $(C_*(R), J_0)$ is isomorphic to $\HFIm(Y, \s)$. Write the mapping cone complex of $C_*(R)$ as the complex $C_*(R)[-1] \otimes \ff[Q]/Q^2$, together with the differential $\partial'= \partial + Q(1 + J_0)$. Recall that $C_*(R)$ splits into even and odd chains, which we denote by $C_\textit{even}$ and $C_{odd}$, respectively. Expanding the mapping cone complex with respect to this splitting, we obtain the complex
\[
\begin{tikzpicture}[description/.style={fill=white,inner sep=2pt}]
\matrix (m) [matrix of math nodes, row sep=2.5em,
column sep=3.5em, text height=1.5ex, text depth=0.25ex]
{ 
C_\textit{odd}[-1] & QC_\textit{odd}[-1]\\
C_\textit{even}[-1] & QC_\textit{even}[-1]\\
};
\path[->,font=\scriptsize]
(m-1-1) edge node[auto] {$Q(1+J_0)$} (m-1-2)
(m-2-1) edge node[auto] {$Q(1+J_0)$} (m-2-2)

(m-1-1) edge node[auto] {$\partial$} (m-2-1)
(m-1-2) edge node[auto] {$\partial$} (m-2-2);
\end{tikzpicture}
\]
with the action of $\partial'$ on each term given by the sum of all outgoing arrows. Since $\partial$ is injective on $C_\textit{odd}$, we see from this that $\ker\; \partial'$ is the direct sum of $QC_\textit{even}[-1]$ and the submodule $K$ of $C_\textit{even}[-1] \oplus QC_\textit{odd}[-1]$ defined by
\[
K = \{v + Q\alpha : v\in C_\textit{even} \text{ and } \alpha \in C_\textit{odd} \text{ and } (1 + J_0)v = \partial \alpha\}.
\]
After quotienting out by $\im \partial'$, the term $QC_\textit{even}[-1]$ gives us a summand isomorphic to $\coker (1 + J_0)$. To understand what happens to $K$, observe that for a fixed even chain $v$, there exists an odd chain $\alpha$ such that $v + Q\alpha \in K$ if and only if $(1 + J_0)v$ is nullhomologous in $C_*(R)$. If such an $\alpha$ exists, it is moreover unique since $\partial$ is injective on odd chains. Identifying elements $v + Q\alpha \in K$ with their even parts $v$, it follows that after quotienting out by $\im \partial'$, the term $K$ gives rise to a summand isomorphic to $\ker (1 + J_0)$. Moreover, it is clear that under this identification, the action of $Q$ takes $\ker (1 + J_0)$ to $\ker (1 + J_0)/\im(1 + J_0) \subseteq \coker (1 + J_0)$, since a representative of the form $v + Q\alpha$ is mapped simply to $Qv$. This completes the proof.
\end{proof}

Theorem~\ref{thm:main} should be compared with Theorem 2.3 of \cite{Dai}, in which a similar expression is derived for the Pin(2)-equivariant monopole Floer homology. Indeed, in the spirit of the proof given there, a simpler proof of Theorem~\ref{thm:main} can be given using the standard mapping cone sequence relating $\HFIm(Y, \s)$ and $\HFm(Y, \s)$ established in \cite[Proposition 4.6]{HMinvolutive}. As we shall see in a moment, however, the advantage of constructing an actual model chain complex is that we may invoke the connected sum formula to compute the involutive Floer homology of connected sums of AR manifolds. 

\begin{remark}
It is conjectured in \cite[Conjecture 3.2]{HMinvolutive} that, for rational homology spheres $Y$, the involutive Heegaard Floer homology $\HFI^+_*(Y, \s)$ is isomorphic to the $\Z/4$-equivariant Borel homology of the Seiberg-Witten Floer spectrum $\mathit{SWF}(Y, \s)$. Theorem~\ref{thm:main}, combined with the calculations of Stoffregen from \cite{Stoffregen}, confirms the conjecture in the case of rational homology spheres that are Seifert fibered over an orbifold with underlying space $S^2$. Indeed, for such spaces, Stoffregen gives a model for the cellular chain complex of the Seiberg-Floer spectrum, with $\ff$ coefficients, as a module over the cellular complex of the symmetry group $\pin$. From this information, using \cite[Equation (8)]{Stoffregen} for the subgroup $\Z/4 \subset \pin$, one can compute the $\Z/4$-equivariant homology of $\mathit{SWF}(Y, \s)$, and check that it agrees with our calculations. 

Alternatively, one can use \cite[Proposition 3.1]{HMinvolutive}, which says that for a space $X$ with a $\pin$ action, we have an isomorphism
$$ H_*^{\Z/4}(X; \ff) \cong H_*(\Cone(C_*^{S^1}(X; \ff) \xrightarrow{Q (1+\inv) } Q \ccdot C_*^{S^1}(X; \ff)[-1])).$$
This leads to an exact triangle relating $H_*^{S^1}(X; \ff)$ to $H_*^{\Z/4}(X; \ff)$. We have $H_*^{S^1}(\swf(Y, \s); \ff) \cong \HF^+(Y, \s)$ by \cite{LidmanManolescu}. By passing to the coBorel and minus versions, and using the fact that $\HF^-(Y, \s)$ is supported in even degrees, we find that $H^*_{\Z/4}(\swf(Y, \s); \ff)$ is isomorphic to the cone of $1+J_0$ on homology, in agreement with the calculation in involutive Heegaard Floer homology. 
\end{remark}

We now make a brief digression to present an attractive graphical representation of the involutive homology associated to a symmetric graded root $R$. We begin by drawing a graded root for the $\ff[U]$-module $\coker (1 + J_0)$. Passing from $\He^-(R)$ to $\coker (1 + J_0)$ identifies pairs of vertices that are symmetric across the infinite stem, with the caveat that vertices lying on the stem itself remain in singleton equivalence classes. Since this identification is $U$-equivariant, we obtain a graded root whose lattice homology is $\coker (1 + J_0)$ by simply ``folding $R$ in half" across this stem. See Figure \ref{fighfi} for an example. 

Now consider $\ker (1 + J_0)$. As a vector space, this is isomorphic to $\coker (1 + J_0)$, as indeed it has the same rank in each grading. We thus begin by again folding $R$ in half. However, in this case the interpretation of the vertices in the half-root are slightly different. As before, vertices lying on the infinite stem in the half-root are identified with their counterparts in $R$, but vertices not on the infinite stem correspond to sums of symmetric pairs of vertices in $R$. To see the effect that this has on the $U$-structure, suppose that $v$ is a vertex of the latter kind such that in our original graded root, the product $w = Uv$ lies on the infinite stem. Then $J_0w = w$, so $U(v + J_0v) = 2w = 0$. Hence in our half-root representation of $\ker (1 + J_0)$, the vertices $v$ and $w$ are \textit{not} connected by an edge. A graphical representation of $\ker (1 + J_0)$ is thus obtained by folding $R$ in half and deleting each edge coming off of the infinite stem. Again, see Figure \ref{fighfi} for an example. 

Finally, to compute the action of $Q$ from $\ker (1 + J_0)$ to $\coker (1 + J_0)$, observe that an element of $\ker (1 + J_0)$ consisting of the sum of two symmetric vertices maps to zero in $\coker (1 + J_0)$, while a vertex lying on the infinite stem has nonzero image. Thus the action of $Q$ is an isomorphism from the infinite stem of $\ker (1 + J_0)$ onto the infinite stem of $\coker (1 + J_0)$, and is zero everywhere else. The complete construction for $\HFIm(\Sigma(2, 7, 15))$ is given below in Figure~\ref{fighfi}. We refer to such a diagram as a graded root for the involutive homology, even though it is not quite a graded root in the sense defined previously.

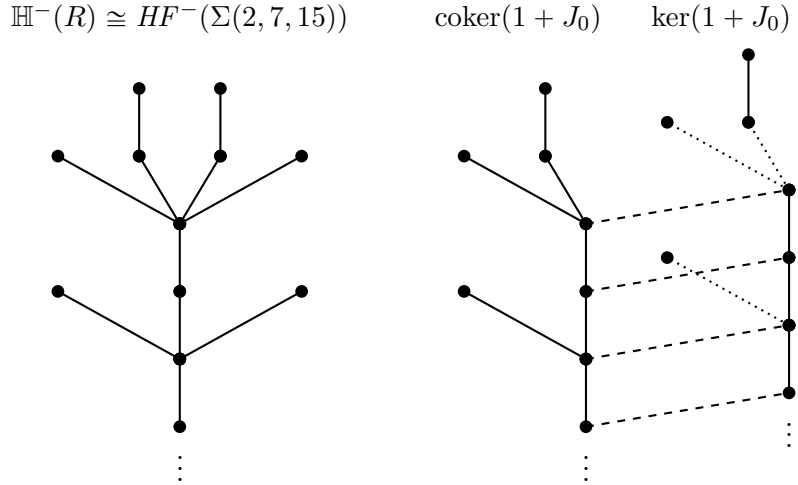
\begin{figure}[h!]
\begin{tikzpicture}[thick,scale=0.9]%
	\node[text width=0.1cm] at (0, -3.5) {\vdots};
	\draw (0, 0) node[circle, draw, fill=black!100, inner sep=0pt, minimum width=4pt] {} -- (0, -1) node[circle, draw, fill=black!100, inner sep=0pt, minimum width=4pt] {};
	\draw (0, -1) node[circle, draw, fill=black!100, inner sep=0pt, minimum width=4pt] {} -- (0, -2) node[circle, draw, fill=black!100, inner sep=0pt, minimum width=4pt] {};
	\draw (0, -2) node[circle, draw, fill=black!100, inner sep=0pt, minimum width=4pt] {} -- (0, -3) node[circle, draw, fill=black!100, inner sep=0pt, minimum width=4pt] {};
	
	\draw (0, 0) node[circle, draw, fill=black!100, inner sep=0pt, minimum width=4pt] {} -- (-0.6, 1) node[circle, draw, fill=black!100, inner sep=0pt, minimum width=4pt] {};
	\draw (0, 0) node[circle, draw, fill=black!100, inner sep=0pt, minimum width=4pt] {} -- (0.6, 1) node[circle, draw, fill=black!100, inner sep=0pt, minimum width=4pt] {};
	
	\draw (-0.6, 1) node[circle, draw, fill=black!100, inner sep=0pt, minimum width=4pt] {} -- (-0.6, 2) node[circle, draw, fill=black!100, inner sep=0pt, minimum width=4pt] {};
	\draw (0.6, 1) node[circle, draw, fill=black!100, inner sep=0pt, minimum width=4pt] {} -- (0.6, 2) node[circle, draw, fill=black!100, inner sep=0pt, minimum width=4pt] {};	
	
	\draw (0, 0) node[circle, draw, fill=black!100, inner sep=0pt, minimum width=4pt] {} -- (-1.8, 1) node[circle, draw, fill=black!100, inner sep=0pt, minimum width=4pt] {};
	\draw (0, 0) node[circle, draw, fill=black!100, inner sep=0pt, minimum width=4pt] {} -- (1.8, 1) node[circle, draw, fill=black!100, inner sep=0pt, minimum width=4pt] {};
	
	\draw (0, -2) node[circle, draw, fill=black!100, inner sep=0pt, minimum width=4pt] {} -- (-1.8, -1) node[circle, draw, fill=black!100, inner sep=0pt, minimum width=4pt] {};
	\draw (0, -2) node[circle, draw, fill=black!100, inner sep=0pt, minimum width=4pt] {} -- (1.8, -1) node[circle, draw, fill=black!100, inner sep=0pt, minimum width=4pt] {};
	
	\node[label = {$\He^-(R) \cong \HFm(\Sigma(2, 7, 15))$}] at (0,2.5) {};

	\node[text width=0.1cm] at (6, -3.5) {\vdots};
	\draw (6, 0) node[circle, draw, fill=black!100, inner sep=0pt, minimum width=4pt] {} -- (6, -1) node[circle, draw, fill=black!100, inner sep=0pt, minimum width=4pt] {};
	\draw (6, -1) node[circle, draw, fill=black!100, inner sep=0pt, minimum width=4pt] {} -- (6, -2) node[circle, draw, fill=black!100, inner sep=0pt, minimum width=4pt] {};
	\draw (6, -2) node[circle, draw, fill=black!100, inner sep=0pt, minimum width=4pt] {} -- (6, -3) node[circle, draw, fill=black!100, inner sep=0pt, minimum width=4pt] {};
	
	\draw (6, 0) node[circle, draw, fill=black!100, inner sep=0pt, minimum width=4pt] {} -- (6-0.6, 1) node[circle, draw, fill=black!100, inner sep=0pt, minimum width=4pt] {};
	
	\draw (6-0.6, 1) node[circle, draw, fill=black!100, inner sep=0pt, minimum width=4pt] {} -- (6-0.6, 2) node[circle, draw, fill=black!100, inner sep=0pt, minimum width=4pt] {};
	
	\draw (6, 0) node[circle, draw, fill=black!100, inner sep=0pt, minimum width=4pt] {} -- (6-1.8, 1) node[circle, draw, fill=black!100, inner sep=0pt, minimum width=4pt] {};
	
	\draw (6, -2) node[circle, draw, fill=black!100, inner sep=0pt, minimum width=4pt] {} -- (6-1.8, -1) node[circle, draw, fill=black!100, inner sep=0pt, minimum width=4pt] {};
	
	\node[label = $\coker(1+J_0)$] at (5,2.5) {};

	\node[text width=0.1cm] at (9, -3.5+0.5) {\vdots};
	\draw (9, 0+0.5) node[circle, draw, fill=black!100, inner sep=0pt, minimum width=4pt] {} -- (9, -1+0.5) node[circle, draw, fill=black!100, inner sep=0pt, minimum width=4pt] {};
	\draw (9, -1+0.5) node[circle, draw, fill=black!100, inner sep=0pt, minimum width=4pt] {} -- (9, -2+0.5) node[circle, draw, fill=black!100, inner sep=0pt, minimum width=4pt] {};
	\draw (9, -2+0.5) node[circle, draw, fill=black!100, inner sep=0pt, minimum width=4pt] {} -- (9, -3+0.5) node[circle, draw, fill=black!100, inner sep=0pt, minimum width=4pt] {};
	
	\draw (9, 0+0.5) node[circle, draw, fill=black!100, inner sep=0pt, minimum width=4pt] {} ;
	\draw (9-0.6, 1+0.5) node[circle, draw, fill=black!100, inner sep=0pt, minimum width=4pt] {};
	\draw (9, 0+0.5) -- (9-0.6, 1+0.5) [dotted];
	
	\draw (9-0.6, 1+0.5) node[circle, draw, fill=black!100, inner sep=0pt, minimum width=4pt] {} -- (9-0.6, 2+0.5) node[circle, draw, fill=black!100, inner sep=0pt, minimum width=4pt] {};
	
	\draw (9, 0+0.5) node[circle, draw, fill=black!100, inner sep=0pt, minimum width=4pt] {}; 
	\draw (9-1.8, 1+0.5) node[circle, draw, fill=black!100, inner sep=0pt, minimum width=4pt] {};
	\draw (9, 0+0.5) --  (9-1.8, 1+0.5) [dotted];
	
	\draw (9, -2+0.5) node[circle, draw, fill=black!100, inner sep=0pt, minimum width=4pt] {}; 
	\draw (9-1.8, -1+0.5) node[circle, draw, fill=black!100, inner sep=0pt, minimum width=4pt] {};
	\draw (9, -2+0.5) -- (9-1.8, -1+0.5) [dotted];
	
	\node[label = $\ker(1+J_0)$] at (8,2.5) {};
	
	\draw (6, 0) -- (9, 0.5) [dashed];
	\draw (6, -1) -- (9, -1+0.5) [dashed];
	\draw (6, -2) -- (9, -2+0.5) [dashed];
	\draw (6, -3) -- (9, -3+0.5) [dashed];
	
\end{tikzpicture}
\caption{Graded root for $\HFIm(\Sigma(2, 7, 15))$. Dashed lines represent the action of $Q$; dotted lines are deleted edges in $\ker (1 + J_0)$. The half-root for $\ker (1 + J_0)$ has a grading shift of one relative to that for $\coker (1 + J_0)$.}
\label{fighfi}
\end{figure}

We now give an explicit description of the involutive Heegaard Floer correction terms for an AR plumbed three-manifold. First, recall the definition of the Neumann-Siebenmann invariant \cite{Neu}, \cite{Sieb}. Let $Y$ be a three-manifold with spin structure $\s$ and plumbing graph $G$, and let $L$ be the integer lattice spanned by the vertices of $G$. Amongst the characteristic vectors on $L$ corresponding to $\spinc$ structures limiting to $\s$ on $Y$, there is a unique vector $w$ (called the \textit{Wu vector}) whose coordinates in the natural basis of $G$ are all zero or one. Define the \textit{Neumann-Siebenmann invariant} of $(Y, \s)$ by
\[
\bar{\mu}(Y, \s) = \dfrac{1}{8}\left(\text{sign}(G) - w^2 \right),
\]
where sign$(G)$ is the signature of $G$ and $w^2$ is the self-pairing of $w$. In \cite{Neu} it is shown that $\bar{\mu}(Y, \s)$ is an integer lift of the Rokhlin invariant (defined for plumbed three-manifolds) and is independent of the choice of plumbing diagram. It was conjectured in \cite{Triangulations} that this invariant was equal to $\beta(-Y, \s)$ for all Seifert fibered rational homology spheres; this was proven in \cite{Dai} for the larger class of all AR plumbed three-manifolds. We obtain here the analogous result for the involutive Floer correction terms. 

\begin{proof}[Proof of Theorem~\ref{thm:ds}]
In order to compute $\du(Y, \s)$, we search for elements $x \in \HFIm(Y, \s)$ that have nonzero $U$-powers $U^nx$ for every $n$, and which eventually lie in the image of $Q$. Examining the description of $\HFIm(Y, \s)$ above, we see that any such $x$ must lie in the summand isomorphic to $\coker(1 + J_0)$. Moreover, it is clear that an $x$ with maximal grading satisfying these conditions is given by the element corresponding to the uppermost vertex in the graded root for $\coker(1 + J_0)$. The degree of this vertex is the same as the degree of the uppermost vertex in $R$, which implies that $\du(Y, \s) = d(Y, \s)$.

We now turn to $\underline{d}(Y, \s)$. In this case we search for $x$ whose $U$-powers do not lie in the image of $Q$; an examination of Figure \ref{fighfi} shows that such $x$ must lie in the copy of $\ker (1 + J_0)$. Since the graded root representing $\ker (1 + J_0)$ is not in general connected, the $x$ of maximal grading with the desired property is now represented by the uppermost vertex on the infinite stem of $\ker (1 + J_0)$. More precisely, we have that 
\[
\underline{d}(Y, \s) = g + 2, 
\]
where $g$ is the degree in $R_k[\sigma + 2]$ of the uppermost $J_0$-invariant vertex. In Theorem 3.2 of \cite{Dai}, the quantity $g+2$ was computed explicitly and shown to be equal to $- 2\bar{\mu}(Y, \s)$. \footnote{Recall from Section~\ref{sec:lattice} that reflecting the graded root $R_k[\sigma + 2]$ across the horizontal line of grading $-1$ yields a downwards opening graded root whose lattice homology computes the Heegaard Floer homology $\HFp(-Y, \s)$. Theorem 3.2 of \cite{Dai} states that the degree of the lowermost $J_0$-invariant vertex in this reflected root is $2\bar{\mu}(Y, \s)$.}
\end{proof}

\section{Monotone graded roots}\label{sec:monotone}

In this section, we construct a special class of graded roots that will be used in the sequel to carry out computations using the connected sum formula. The key result we prove here is that the involutive complex of any symmetric graded root is locally equivalent, in the sense of Definition~\ref{def:localE},  to the involutive complex of some root in this preferred class. For the purposes of computing $\du_{\Inv}$ and $\dl_{\Inv}$, it will thus suffice to work with these simpler roots, rather than the set of symmetric graded roots in general. Our result may also be thought of as providing a convenient set of representatives for AR manifolds in the group $\Inv_\Q$ (as defined in Section~\ref{sec:hfi}). 

Fix a positive integer $n$ and let $h_1, \ldots, h_n$ and $r_1, \ldots, r_n$ be sequences of rational numbers, all differing from one another by even integers, such that
\begin{enumerate}
\item $h_1 > h_2 > \cdots > h_n$, 
\item $r_1 < r_2 < \cdots < r_n$, and 
\item $h_n \geq r_n$.
\end{enumerate}
We construct a graded root $M = M(h_1, r_1; \ldots; h_n, r_n)$ associated to this data as follows. First, we form the stem of our graded root by drawing a single infinite $U$-tower with uppermost vertex in degree $r_n$. For each $1 \leq i < n$, we then introduce a symmetric pair of leaves $v_i$ and $J_0v_i$ in degree $h_i$ and connect these to the stem by using a pair of paths meeting the stem in degree $r_i$. (See Figure \ref{monotone}.) If $h_n > r_n$, we similarly introduce a pair of vertices $v_n$ and $J_0v_n$ in degree $h_n$ and connect these to the stem at $r_n$; in the ``degenerate" case that $h_n = r_n$, we declare $v_n$ to be the $J_0$-invariant vertex of degree $r_n$ lying on the stem and take no further action. Examples are given in Figure \ref{monotone}. 

\begin{figure}[h!]
\begin{tikzpicture}[thick,scale=0.6]%

	\node[text width=0.1cm] at (0, -4.5) {\vdots};
	
	\draw (0, 0) -- (0, -4);

	\draw (0, 0)  -- (0.6, 1);
	\draw (0, 0)  -- (-0.6, 1);
	
	\draw (0, -1) -- (1.8, 2);
	\draw (0, -1) -- (-1.8, 2);
	
	\draw (0, -3) -- (3.6, 3);
	\draw (0, -3) -- (-3.6, 3);

	\node[label = $v_1$] at (-3.6,2.8) {};
	\node[label = $J_0v_1$] at (3.6,2.8) {};
	
	\node[label = $v_2$] at (-1.8,1.8) {};
	\node[label = $J_0v_2$] at (1.8,1.8) {};
	
	\node[label = $v_3$] at (-0.6,0.8) {};
	\node[label = $J_0v_3$] at (0.6,0.8) {};

	\node[text width=0.1cm] at (8, -4.5) {\vdots};
	
	\draw (8, 1) -- (8, -4);
	
	\draw (8, -1) -- (8+1.8, 3);
	\draw (8, -1) -- (8-1.8, 3);
	
	\node[label = $v_1$] at (8-1.8,2.8) {};
	\node[label = $J_0v_1$] at (8+1.8,2.8) {};
	\node[label = $v_2$] at (8, 0.8) {};
	
	\node[label = \text{Degree}] at (-8, 3.4) {};
	\node[label = 4] at (-8, 2.4) {};
	\node[label = 2] at (-8, 1.4) {};
	\node[label = 0] at (-8, 0.4) {};
	\node[label = -2] at (-8, -0.6) {};
	\node[label = -4] at (-8, -1.6) {};
	\node[label = -6] at (-8, -2.6) {};
	\node[label = -8] at (-8, -3.6) {};
	
	\draw[loosely dotted] (-7, 3) -- (11, 3);
	\draw[loosely dotted] (-7, 2) -- (11, 2);
	\draw[loosely dotted] (-7, 1) -- (11, 1);
	\draw[loosely dotted] (-7, 0) -- (11, 0);
	\draw[loosely dotted] (-7, -1) -- (11, -1);
	\draw[loosely dotted] (-7, -2) -- (11, -2);
	\draw[loosely dotted] (-7, -3) -- (11, -3);
\end{tikzpicture}
\caption{$M(4,-8;2, -4; 0,-2)$ (left) and $M(4, -4; 0, 0)$ (right).}\label{monotone} 
\end{figure}
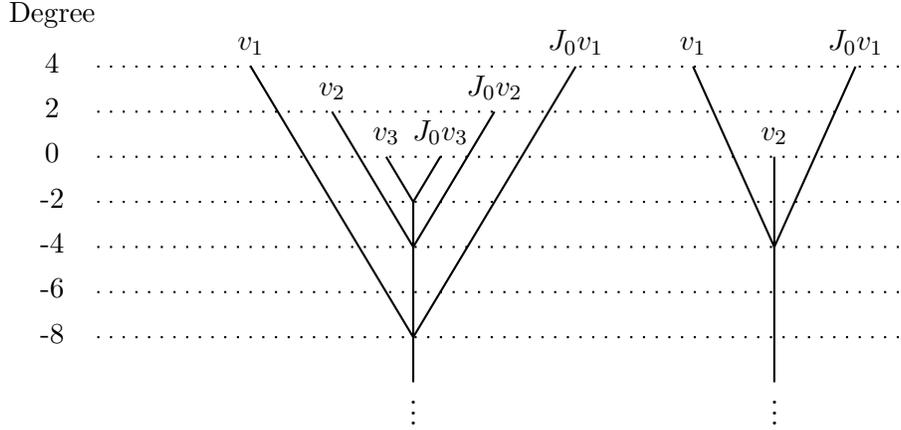

We call a symmetric graded root constructed in this way a \textit{monotone} graded root. Note that if $M$ is a monotone root, then the only vertices of $M$ which have valency greater than two lie on the stem and have degree equal to one of the $r_i$. Note also that due to the ordering of the $r_i$, our indexing of the leaves $v_i$ is consistent with the lexicographic left-to-right ordering described in Section \ref{sec:standard}. Geometrically, the ``monotonicity" condition says that the gradings of the leaves of $M$ form a strictly decreasing sequence as we move inwards towards the stem. 

We now investigate the standard complex of a monotone graded root parameterized by $M = M(h_1, r_1; \cdots; h_n, r_n)$. For simplicity, assume for the moment that $h_n$ is strictly greater than $r_n$. Following the prescription of Section \ref{sec:standard}, the standard complex $C_*(M)$ has $2n$ even generators corresponding to the leaves of $M$. Enumerating these by $v_1, \ldots, v_n$ and their reflections by $v_{n+1}=J_0v_n, \dots, v_{2n}=J_0v_1$, we see that $v_i$ has grading $h_i$ for $1 \leq i \leq n$ and $h_{2n+1-i}$ otherwise. Furthermore, $C_*(M)$ has $2n-1$ odd generators $\alpha_i$, which have gradings $r_i + 1$ for $1 \leq i \leq n$ and $r_{2n - i} + 1$ otherwise. The gradings of the $v_i$ are strictly decreasing as we move towards the stem, while the gradings of the $\alpha_i$ are strictly increasing. 

If $h_n = r_n$, then the prescription of Section \ref{sec:standard} gives us a complex with $2n - 1$ even generators. However, we can easily make a slight modification to $C_*(M)$ in this case by introducing a duplicate generator for $v_n$ in grading $r_n$ (in addition to the one which already exists) and adding an extra odd generator in grading $r_n + 1$ whose boundary is the sum of the two generators corresponding to $v_n$. This gives a complex with the same properties as those described in Section \ref{sec:standard}, but which is consistent with the notation of the previous paragraph. For convenience, we will thus use ``standard complex" to mean this slightly modified complex in the case that our graded root has an odd number of leaves. 

We now show that every symmetric graded root $R$ has a monotone subroot. This will be obtained as the smallest graded subroot of $R$ containing a certain subset of leaves. In general there may be multiple such subroots, but here we construct one which in some sense captures as much of $R$ as possible. 

We begin with some terminology. Let $v$ be any vertex in $R$. Denote by $\gamma_v$ the half-infinite path in $R$ starting at $v$ and running down the length of the stem. We define the \textit{base} $b(v)$ of $v$ to be the degree at which $\gamma_v$ first meets the stem; i.e., the maximal degree of any vertex in the intersection of the stem and $\gamma_v$. We denote the collection of all vertices in $R$ with a specified base $b$ by $C_b$, and call this set the \textit{cluster} of $R$ based at $b$. Geometrically, $C_b$ consists of all the branches of $R$ meeting the stem at degree $b$. 

In general, we will use the term ``cluster" to refer to a nontrivial cluster (i.e., one with more than one vertex). If $C_b$ is a nontrivial cluster, then it contains a pair of distinct leaves $v$ and $J_0v$ whose gradings are maximal amongst the vertices of $C_b$. We call these leaves the \textit{tip(s)} of $C_b$. If $C_b$ has more than one such pair, we arbitrarily select one to be the tip(s). Which one we select is not important, but we will use the fact that we have a specially marked pair of such vertices in each cluster. 

We now construct a subset $S$ of the leaves of $R$ using the following ``greedy" algorithm. Let $r$ be the degree of the uppermost $J_0$-invariant vertex $v$ in $R$. If the cluster $C_r$ is trivial, then we add $v$ to $S$; otherwise, we add the two tips of $C_r$ to $S$. Next, let $b$ be the greatest even integer strictly less than $r$ for which $C_b$ is nontrivial. If the tips of $C_b$ have degree strictly greater than all of the vertices currently in $S$, then we add them to $S$; otherwise, we do not. We continue on down the stem in this manner, adding tips of clusters precisely when they have height strictly greater than any of the vertices currently in $S$. We define $M$ to be the smallest graded subroot of $R$ that contains the leaves in $S$ after this process terminates. That is, identifying $R$ with its lattice homology $\He^-(R)$, $M$ is defined to be the $\ff[U]$-submodule spanned by the elements of $S$. This is clearly a monotone root; see Figure \ref{monotonesub} for an example. 

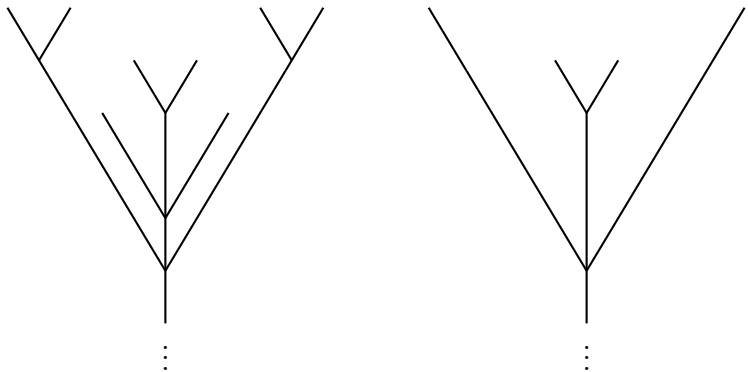
\begin{figure}[h!]
\begin{tikzpicture}[thick,scale=0.7]%

	\node[text width=0.1cm] at (0, -4.5) {\vdots};
	
	\draw (0, 0) -- (0, -4);

	\draw (0, 0)  -- (0.6, 1);
	\draw (0, 0)  -- (-0.6, 1);

	\draw (0, -2) -- (1.2, 0);
	\draw (0, -2) -- (-1.2, 0);

	\draw (0, -3) -- (3, 2);
	\draw (0, -3) -- (-3, 2);
	
	\draw (2.4, 1) -- (1.8, 2);
	\draw (-2.4, 1) -- (-1.8, 2);
	
	\node[text width=0.1cm] at (8, -4.5) {\vdots};
	
	\draw (8, 0) -- (8, -4);

	\draw (8, 0)  -- (8+0.6, 1);
	\draw (8, 0)  -- (8-0.6, 1);
	
	\draw (8, -3) -- (8+3, 2);
	\draw (8, -3) -- (8-3, 2);

\end{tikzpicture}
\caption{A graded root (left) and its monotone subroot (right).}\label{monotonesub} 
\end{figure}

The essential claim of this section is now:

\begin{theorem}\label{thm:monotonesubroot}
Let $R$ be a symmetric graded root. Then $(C_*(R), J_0)$ is locally equivalent to $(C_*(M), J_0)$, where $M$ is the monotone subroot of $R$ constructed above.
\end{theorem}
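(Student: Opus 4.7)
The plan is to exhibit explicit grading-preserving chain maps $F \colon C_*(R) \to C_*(M)$ and $G \colon C_*(M) \to C_*(R)$, each strictly $J_0$-equivariant, such that both induce isomorphisms on $U^{-1} H_*$; this suffices to establish the local equivalence. The map $G$ is straightforward: because $M$ was constructed as a subroot of $R$, we take the tautological inclusion of standard complexes, sending each leaf (respectively angle) of $M$ to the corresponding leaf (respectively angle) of $R$. This is manifestly a $U$-equivariant chain map that strictly intertwines the two reflection involutions, and since the infinite stem of $M$ coincides with a cofinal sub-stem of $R$, the induced map on $U^{-1}H_* \cong \ff[U, U^{-1}]$ is an isomorphism.

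The construction of $F$ is the heart of the argument. For each leaf $w$ of $R$ with base $b(w)$ and grading $g(w)$, let $j(w)$ be the smallest index with $r_{j(w)} \ge b(w)$; such an index exists because every cluster of $R$ has base at most $r_n$. A short case analysis of the greedy algorithm yields the key inequality $g(w) \le h_{j(w)}$: if $b(w)$ is itself a captured base then the tip of that cluster is precisely $h_{j(w)}$, and otherwise the cluster $C_{b(w)}$ was skipped because its tip height was at most the running maximum, which equals $h_{j(w)}$ at the moment of processing. We then set $F(w) = U^{(h_{j(w)} - g(w))/2}\, v_{j(w)}$ for $w$ on the left of the stem, and symmetrically on the right, which makes $F$ grading-preserving on even chains and strictly $J_0$-equivariant. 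For an angle $\beta$ of $R$ between consecutive leaves $w_\ell, w_{\ell+1}$, one first observes that any cluster of $R$ whose base lay strictly between $b(w_\ell)$ and $b(w_{\ell+1})$ would contribute intermediate leaves in the planar order, which forces $|j(w_\ell) - j(w_{\ell+1})| \le 1$. If the two indices agree, we set $F(\beta) = 0$ and the chain-map condition collapses to the cancellation identity $U^{(h_j - \gamma)/2} v_j + U^{(h_j - \gamma)/2} v_j = 0$ at the supporting vertex $\gamma$ of $\beta$; if the indices are consecutive, we set $F(\beta) = \tilde\alpha$, the corresponding angle between consecutive tips of $M$, and verify $\partial F(\beta) = F(\partial \beta)$ by direct substitution, noting that the supporting vertices of $\beta$ and $\tilde\alpha$ sit at the same grading.

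The main obstacle is the combinatorial bookkeeping around consecutive leaves: establishing the one-step assertion on the indices, matching the gradings of the supporting vertices of $\beta$ and $\tilde\alpha$, and handling the special cases of angles crossing the stem as well as the degenerate case $h_n = r_n$ (where $v_n$ is $J_0$-invariant and some leaves on both sides of the stem are sent to the same element). Once $F$ is in hand, the remaining requirements are immediate: the symmetric construction ensures strict $J_0$-equivariance, and the $U^{-1}H_*$-isomorphism property follows from the fact that the topmost class of $R$'s infinite stem (at grading $r_n$) is sent to the topmost class of $M$'s stem by both $F$ and $G$.
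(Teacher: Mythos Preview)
Your map $G: C_*(M) \to C_*(R)$ has a genuine gap. There is no ``tautological inclusion'' of standard complexes: while the leaves of $M$ are indeed a subset of the leaves of $R$, an angle $\beta$ of $M$ between consecutive tips $v_{i_j}$ and $v_{i_{j+1}}$ does \emph{not} correspond to any single angle of $R$, because in $R$ those two tips are typically not consecutive leaves. Sending $\beta$ to any single angle $\alpha_s$ of $R$ fails the chain-map condition, since $\partial\alpha_s$ involves $v_s$ and $v_{s+1}$ rather than $v_{i_j}$ and $v_{i_{j+1}}$. The paper handles this by sending $\beta$ to the telescoping sum $\sum_{s=i_j}^{i_{j+1}-1} U^{(\gr(\alpha_s)-\gr(\beta))/2}\alpha_s$, whose boundary collapses to the correct two endpoints; for this to be grading-preserving one needs $\gr(\alpha_s)\ge\gr(\beta)$ for each such $s$, and this is precisely where the paper's ``nicely ordered'' preprocessing (placing tips innermost within each cluster) enters.

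Your map $F: C_*(R) \to C_*(M)$, by contrast, is essentially the paper's folding map, just rephrased in terms of the base $b(w)$ rather than the left-to-right index; the one-step property of $j$ and the matching of supporting gradings follow, as you indicate, from the observation that in any symmetric planar embedding the bases increase weakly toward the stem (since the set of leaves with base $\ge b$ is a $J_0$-invariant contiguous block for each $b$). Once $G$ is corrected as above, your argument goes through and coincides with the paper's.
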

\begin{proof}
Let $R$ be a symmetric graded root with either $2n-1$ or $2n$ leaves, and select a pair of tips from each nontrivial cluster as described above. Before we begin the proof, it will be convenient to assume that $R$ has the following technical property with regard to the ordering of its leaves. Let $C_b$ be any nontrivial cluster in $R$. We require that out of all of the leaves of $R$ contained in $C_b$, the tips of $C_b$ are closest to the vertical line of symmetry in the lexicographic ordering (i.e., have indices closest to $n$). Such a property can easily be achieved by permuting the order of the leaves within each cluster in such a way that does not change the isomorphism class of $R$. If this property is satisfied, we say that $R$ is {\em nicely ordered}.

In order to provide some intuition for the proof, we first construct a pair of maps between $H_*(M)$ and $H_*(R)$ on the homological level. There is an obvious $\mathbb{F}[U]$-equivariant map $F : H_*(M) \rightarrow H_*(R)$ given by the inclusion of $M$ into $R$. To define a map $G: H_*(R) \rightarrow H_*(M)$, let $v$ be any vertex of $R$ lying to the left of the vertical line of symmetry. If $v$ already lies in $M$, then we define $G(v) = v$. Otherwise, we define $G(v)$ to be the closest vertex in $M$ with grading $\gr(v)$ that lies to the right of $v$. (Here we identify vertices with their corresponding representatives in the homology.) See Figure \ref{mapGex}. 

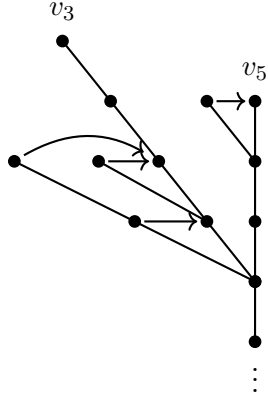
\begin{figure}[h!]
\begin{tikzpicture}[thick,scale=0.8]%

	\node[text width=0.1cm] at (0, -4.5) {\vdots};
	
	\node[label = $v_5$] at (0,0){};
	\node[circle, draw, fill=black!100, inner sep=0pt, minimum width=4pt] at (0,0) {};
	\node[circle, draw, fill=black!100, inner sep=0pt, minimum width=4pt] at (0,-1) {};
	\node[circle, draw, fill=black!100, inner sep=0pt, minimum width=4pt] at (0,-2) {};
	\node[circle, draw, fill=black!100, inner sep=0pt, minimum width=4pt] at (0,-3) {};
	\node[circle, draw, fill=black!100, inner sep=0pt, minimum width=4pt] at (0,-4) {};
		
	\draw (0,0) -- (0,-1);
	\draw (0,-1) -- (0,-2);
	\draw (0,-2) -- (0,-3);
	\draw (0,-3) -- (0,-4);

	\node[circle, draw, fill=black!100, inner sep=0pt, minimum width=4pt] at (-0.8,0) {};
	\draw (-0.8,0) -- (0,-1){};
	
	\node[label = $v_3$] at (-3.2,1){};
	\node[circle, draw, fill=black!100, inner sep=0pt, minimum width=4pt] at (-3.2,1) {};
	\node[circle, draw, fill=black!100, inner sep=0pt, minimum width=4pt] at (-2.4,0) {};
	\node[circle, draw, fill=black!100, inner sep=0pt, minimum width=4pt] at (-1.6,-1) {};
	\node[circle, draw, fill=black!100, inner sep=0pt, minimum width=4pt] at (-0.8,-2) {};
	\node[circle, draw, fill=black!100, inner sep=0pt, minimum width=4pt] at (0,-3) {};
	
	\draw (-3.2,1) -- (-2.4,0){};
	\draw (-2.4,0) -- (-1.6,-1){};
	\draw (-1.6,-1) -- (-0.8,-2){};
	\draw (-0.8,-2) -- (0,-3){};

	\node[circle, draw, fill=black!100, inner sep=0pt, minimum width=4pt] at (-2,-2) {};
	\node[circle, draw, fill=black!100, inner sep=0pt, minimum width=4pt] at (-4,-1) {};

	\node[circle, draw, fill=black!100, inner sep=0pt, minimum width=4pt] at (-2.6,-1) {};
	
	\draw (0,-3) -- (-2,-2){};
	\draw (-2,-2) -- (-4,-1){};
	\draw (-0.8,-2) -- (-2.6,-1){};
	
	\draw (-0.8+0.16,0)[->] -- (0-0.16,0){};
	\draw (-2+0.16,-2)[->] -- (-0.8-0.16,-2){};
	\draw [->] (-4+0.16,-1+0.1) to [out=30,in=150] (-1.6-0.25, -1+0.15);
	\draw (-2.6+0.16,-1)[->] -- (-1.6-0.16, -1){};

\end{tikzpicture}
\caption{The left half of a graded root $R$ and its monotone subroot $M$. In this example, $M$ is spanned by $v_3$ (and $J_0v_3$) and $v_5$. Arrows give the action of $G: H_*(R) \rightarrow H_*(M)$ on the vertices of $R$ not in $M$.}\label{mapGex} 
\end{figure}

This defines $G$ for vertices to the left of the stem; we extend $G$ to all the vertices of $R$ in the obvious $J_0$-equivariant way. The fact that $M$ is monotone (as well as the assumption that $R$ is nicely ordered) ensures that $G$ is equivariant with respect to the action of $U$. We think of $G$ as ``folding" $M$ into $R$ by collapsing all of the vertices of $R$ outside of $M$ towards the center line. 

We now show that $F$ and $G$ are induced by actual $J_0$-equivariant chain maps between $C_*(M)$ and $C_*(R)$. Let $v_1, \ldots, v_n$ be the generators of $C_*(R)$ lying to the left of the center line, and let $i_1 < i_2 < \cdots < i_k = n$ be the indices of the leaves in the left half of $R$ which lie in $S$. In order to remember the fact that $M$ is a subroot of $R$, it will be convenient to enumerate the even generators of $C_*(M)$ by $v_{i_1}, \ldots, v_{i_k}$ (and their reflections), rather than by using the indices $1$ through $k$. We denote the odd generators of $C_*(M)$ likewise by $\beta_{i_1}, \ldots, \beta_{i_{k-1}}$ (and their reflections), so that $\partial \beta_{i_j}$ is the sum of $U$-powers of $v_{i_j}$ and $v_{i_{j+1}}$. It is not hard to see that (as angles in $R$) $\beta_{i_j}$ and $\alpha_{i_j}$ are based at the same vertex and have the same sides. However, $\beta_{i_j}$ should not be identified with $\alpha_{i_j}$ as an element of $C_*(M)$, since $\partial \alpha_{i_j}$ is the sum of $U$-powers of $v_{i_j}$ and $v_{i_j+1}$. (The index here is $i_j + 1$, instead of $i_{j+1}$.) 

We now construct a chain map $f: C_*(M) \rightarrow C_*(R)$. On the even part of $C_*(M)$, we define this to be the inclusion sending $v_{i_j}$ as a generator of $C_*(M)$ to $v_{i_j}$ as a generator of $C_*(R)$. For $\beta_{i_j}$ an odd generator of $C_*(R)$, we define
\[
f(\beta_{i_j}) = \sum_{s = i_j}^{i_{j+1}-1} U^{(\gr(\alpha_s)-\gr(\beta_{i_j}))/2} \alpha_s.
\]
One can check that $\gr(\beta_{i_j}) = \gr(\alpha_{i_j}) \leq \gr(\alpha_s)$ for all $i_j \leq s < i_{j+1}$ due to the monotonicity of $M$ and the fact that $R$ is nicely ordered. This defines $f$ on generators of $C_*(R)$ to the left of the center line; we extend $f$ to generators on the right side of the center line in the obvious $J_0$-equivariant way. Then $f$ gives a $J_0$-equivariant chain map from $C_*(M)$ to $C_*(R)$ which induces the map $F$ on homology. The reader may find it helpful to consult Figure \ref{figchainmapex}.

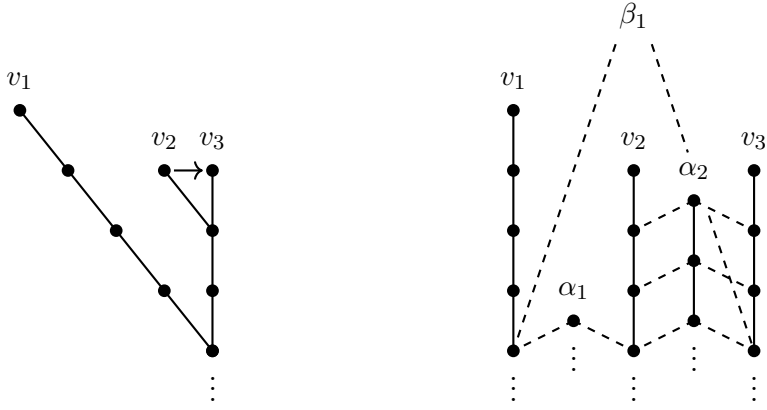
\begin{figure}[h!]
\begin{tikzpicture}[thick,scale=0.8]%

	\node[text width=0.1cm] at (0, -3.5) {\vdots};
	
	\node[label = $v_3$] at (0,0) {};
	\node[circle, draw, fill=black!100, inner sep=0pt, minimum width=4pt] at (0,0) {};
	\node[circle, draw, fill=black!100, inner sep=0pt, minimum width=4pt] at (0,-1) {};
	\node[circle, draw, fill=black!100, inner sep=0pt, minimum width=4pt] at (0,-2) {};
	\node[circle, draw, fill=black!100, inner sep=0pt, minimum width=4pt] at (0,-3) {};
		
	\draw (0,0) -- (0,-1);
	\draw (0,-1) -- (0,-2);
	\draw (0,-2) -- (0,-3);

	\node[label = $v_2$] at (-0.8,0) {};
	\node[circle, draw, fill=black!100, inner sep=0pt, minimum width=4pt] at (-0.8,0) {};
	\draw (-0.8,0) -- (0,-1){};
	
	\node[label = $v_1$] at (-3.2,1) {};
	\node[circle, draw, fill=black!100, inner sep=0pt, minimum width=4pt] at (-3.2,1) {};
	\node[circle, draw, fill=black!100, inner sep=0pt, minimum width=4pt] at (-2.4,0) {};
	\node[circle, draw, fill=black!100, inner sep=0pt, minimum width=4pt] at (-1.6,-1) {};
	\node[circle, draw, fill=black!100, inner sep=0pt, minimum width=4pt] at (-0.8,-2) {};
	\node[circle, draw, fill=black!100, inner sep=0pt, minimum width=4pt] at (0,-3) {};
	
	\draw (-3.2,1) -- (-2.4,0){};
	\draw (-2.4,0) -- (-1.6,-1){};
	\draw (-1.6,-1) -- (-0.8,-2){};
	\draw (-0.8,-2) -- (0,-3){};
	
	\draw (-0.8+0.16,0)[->] -- (0-0.16,0){};
	
	
	\node[label = $v_3$] at (9,0){};
	\node[circle, draw, fill=black!100, inner sep=0pt, minimum width=4pt] at (9,0) {};
	\node[circle, draw, fill=black!100, inner sep=0pt, minimum width=4pt] at (9,-1) {};
	\node[circle, draw, fill=black!100, inner sep=0pt, minimum width=4pt] at (9,-2) {};
	\node[circle, draw, fill=black!100, inner sep=0pt, minimum width=4pt] at (9,-3) {};
	\node[text width=0.1cm] at (9, -3.5) {\vdots};

	\node[label = $v_2$] at (7,0){};
	\node[circle, draw, fill=black!100, inner sep=0pt, minimum width=4pt] at (7,0) {};
	\node[circle, draw, fill=black!100, inner sep=0pt, minimum width=4pt] at (7,-1) {};
	\node[circle, draw, fill=black!100, inner sep=0pt, minimum width=4pt] at (7,-2) {};
	\node[circle, draw, fill=black!100, inner sep=0pt, minimum width=4pt] at (7,-3) {};
	\node[text width=0.1cm] at (7, -3.5) {\vdots};

	\node[label = $v_1$] at (5,1){};	
	\node[circle, draw, fill=black!100, inner sep=0pt, minimum width=4pt] at (5,1) {};
	\node[circle, draw, fill=black!100, inner sep=0pt, minimum width=4pt] at (5,0) {};
	\node[circle, draw, fill=black!100, inner sep=0pt, minimum width=4pt] at (5,-1) {};
	\node[circle, draw, fill=black!100, inner sep=0pt, minimum width=4pt] at (5,-2) {};
	\node[circle, draw, fill=black!100, inner sep=0pt, minimum width=4pt] at (5,-3) {};
	\node[text width=0.1cm] at (5, -3.5) {\vdots};	
	

	\node[label = $\alpha_2$] at (8,0-0.5) {};	
	\node[circle, draw, fill=black!100, inner sep=0pt, minimum width=4pt] at (8,0-0.5) {};
	\node[circle, draw, fill=black!100, inner sep=0pt, minimum width=4pt] at (8,-1-0.5) {};
	\node[circle, draw, fill=black!100, inner sep=0pt, minimum width=4pt] at (8,-2-0.5) {};
	\node[text width=0.1cm] at (8, -2.5-0.5) {\vdots};	

	\node[label = $\alpha_1$] at (6,-2-0.5){};		
	\node[circle, draw, fill=black!100, inner sep=0pt, minimum width=4pt] at (6,-2-0.5) {};
	\node[text width=0.1cm] at (6, -2.5-0.5) {\vdots};	
	
	\draw (9,0) -- (9, -3) {};
	\draw (7,0) -- (7, -3) {};
	\draw (5,1) -- (5, -3) {};
	\draw (8,0-0.5) -- (8, -2-0.5) {};
	
	\draw (8,0-0.5) [dashed] -- (9,-1) {};
	\draw (8,-1-0.5) [dashed] -- (9,-2) {};
	\draw (8,-2-0.5) [dashed] -- (9,-3) {};
	
	\draw (8,0-0.5) [dashed] -- (7,-1) {};
	\draw (8,-1-0.5) [dashed] -- (7,-2) {};
	\draw (8,-2-0.5) [dashed] -- (7,-3) {};
	
	\draw (6,-2-0.5) [dashed] -- (5,-3) {};
	\draw (6,-2-0.5) [dashed] -- (7,-3) {};

	\draw (7-0.3,3-0.9) [dashed] -- (5,-3) {};
	\draw (7+0.3,3-0.9) [dashed] -- (7.9,0.3) {};
	\draw (8.25,-0.75) [dashed] -- (9,-3) {};
	
	\node[label = $\beta_1$] at (7,2){};
	
\end{tikzpicture}
\caption{The left half of a graded root $R$ and its monotone subroot $M$, together with (half of) the complexes $C_*(R)$ and $C_*(M)$. In this example, $i_1 = 1$ and $i_2 = 3$, so $M$ is spanned by $v_1$ (and $J_0v_1$) and $v_3$. The generator $\beta_1$ of $C_*(M)$ has boundary $v_1 + U^3v_3$ and has grading equal to that of $\alpha_1$ (although not drawn that way).}
\label{figchainmapex}
\end{figure}

Finally, we construct a chain map $g: C_*(R) \rightarrow C_*(M)$. Let $v_s$ be an even generator of $C_*(R)$ lying to the left of the stem, and let $i_j$ be the least element of $i_1, \ldots, i_k$ greater than or equal to $s$. Again due to the structure of $M$ and $R$, it is not hard to see that $\gr(v_s) \leq \gr(v_{i_j})$. We thus define
\[
g(v_s) = U^{(\gr(v_{i_j})-\gr(v_s))/2} v_{i_j},
\]
extending $J_0$-equivariantly for generators to the right of the center line. Now let $\alpha_s$ be an odd generator of $C_*(R)$. If $s$ is an element of $i_1, \ldots, i_k$, we define $g(\alpha_s) = \beta_s$; otherwise, we define $g(\alpha_s) = 0$ (again extending to all of $C_*(R)$ $J_0$-equivariantly). We leave it to the reader to check that $g$ is a chain map which induces $G$. See Figure \ref{figchainmapex}. 

It is not hard to see that $F$ and $G$ induce the isomorphisms described above, between $H_*(M)$ and $H_*(R)$ after inverting the action of $U$. This completes the proof.
\end{proof}
 
If $R$ is a symmetric graded root, it turns out that the monotone root associated to $R$ is an invariant of $(C_*(R), J_0)$ up to local equivalence. Indeed, as a counterpart to Theorem~\ref{thm:monotonesubroot}, we have the following result:

\begin{theorem}
\label{thm:localeq}
Let $M$ and $M'$ be two monotone roots, with involutions $J_0$ and $J_0'$. If $(C_*(M), J_0)$ and $(C_*(M'), J_0')$ are locally equivalent $\inv$-complexes, then $M=M'$.
\end{theorem}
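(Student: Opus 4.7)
The plan is to proceed by induction on $n$, showing that the sequences $(h_i)_{i=1}^n$ and $(r_i)_{i=1}^n$ defining a monotone root are determined by local equivalence invariants of the associated $\inv$-complex $(C_*(M), J_0)$. For the base case $n=1$, a direct computation from Theorems~\ref{thm:main} and~\ref{thm:ds} applied to $C_*(M(h_1, r_1))$ yields $d = \du = h_1 + 2$ and $\dl = r_1 + 2$, all of which are local equivalence invariants on $\Inv_\Q$. This pins down $(h_1, r_1)$.

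For the inductive step, the equality $h_1 = h_1'$ is immediate from the $d$-invariant. To match $r_1 = r_1'$ I would introduce the refined invariant
\[
\rho(\xi) := \max\{r \in \Q : U^{(\gr(\xi) - r)/2}(1+J)\xi = 0 \text{ in } H_*(C)\},
\]
defined for classes $\xi \in H_*(C)$ with nonzero image in $U^{-1} H_*(C)$. A direct computation in $C_*(M)$ gives $\rho([v_1]) = r_1$. Given a local equivalence $F \colon C_*(M) \to C_*(M')$ with chain homotopy $H$ satisfying $F J_0 + J_0' F = \partial H + H\partial$, for the cycle $v_1$ one obtains $(1+J_0')F(v_1) = F((1+J_0)v_1) + \partial H(v_1)$, so any $U$-power killing $(1+J_0)[v_1]$ in $H_*(C_*(M))$ also kills $(1+J_0')[F(v_1)]$ in $H_*(C_*(M'))$. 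Since $[F(v_1)]$ lies in grading $h_1' = h_1$ with nonzero image in $U^{-1} H_*(C_*(M'))$, the explicit cycle structure of $C_*(M')$ forces $[F(v_1)] \in \{[v_1'], [J_0' v_1']\}$, either choice yielding $\rho([F(v_1)]) = r_1'$. Thus $r_1' \geq r_1$, and the symmetric argument with $G$ gives $r_1 = r_1'$. Iterating this analysis at grading $h_i$ for $i = 2, \ldots, n$ -- using the previously matched parameters to separate genuine ``level-$i$'' contributions from $U$-descendants of higher branches -- extracts all the pairs $(h_i, r_i)$ and completes the induction.

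The main obstacle is executing the iteration at lower gradings cleanly. At grading $h_i$ with $i \geq 2$, the nonzero $U$-local classes in $H_*(C_*(M'))$ include both the level-$i$ cycles $[v_i'], [J_0' v_i']$ and the $U$-descendants $U^{(h_j' - h_i)/2}[v_j']$ for $j < i$, so $[F(v_i)]$ is a priori an arbitrary linear combination of these. The strict inequalities $r_1 < r_2 < \cdots < r_n$ (combined with the already established equalities $r_1 = r_1', \ldots, r_{i-1} = r_{i-1}'$) are crucial: they ensure that any $U$-descendant contribution from a level $j < i$ class has $\rho$-value at most $r_{i-1} < r_i$, whereas a genuine level-$i$ contribution has $\rho$-value $r_i'$. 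The inequality $\rho([F(v_i)]) \geq \rho([v_i]) = r_i$ then forces the level-$i$ summand of $[F(v_i)]$ to be nontrivial, giving $r_i' \geq r_i$; the symmetric argument closes the induction and also ensures $n = n'$, completing the proof.
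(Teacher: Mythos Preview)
Your approach via the invariant $\rho$ is workable, but the argument as written has a real gap at the iteration step for $i\geq 2$. When you analyze $[F(v_i)]$ in grading $h_i$, you assert that the $U$-local classes in $H_*(C_*(M'))$ at that grading are spanned by the level-$i$ cycles $[v_i'],[J_0'v_i']$ together with $U$-descendants of levels $j<i$. But this presupposes $h_i'=h_i$, which you have not yet shown: if $h_i'>h_i$ then $[v_i']$ is not in grading $h_i$ at all, and if $h_j'\geq h_i$ for some $j>i$ then further summands appear. The claim that ``the level-$i$ summand of $[F(v_i)]$ is nontrivial'' is therefore circular as stated. Your $\rho$-argument can in fact be made to yield $h_i'\geq h_i$ first (any unpaired summand $[v_j']$ in $[F(v_i)]$ must have $r_j'\geq r_i>r_{i-1}'$, hence $j\geq i$, hence $h_i'\geq h_j'\geq h_i$), after which the rest goes through; but this step is missing. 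Also, the framing as ``induction on $n$'' is misleading: what you actually do is an iteration over the index $i$, and no induction hypothesis on a smaller root is ever invoked.

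The paper's proof takes a different and somewhat cleaner route that avoids the iteration entirely. Working directly on homology (where the local equivalence maps strictly intertwine the involutions), for each $i$ one lets $f(i)$ be the largest index $j$ such that $[v_j']$ or $[J_0'v_j']$ appears unpaired in $F_*([v_i])$; grading considerations give $h_i\leq h'_{f(i)}$, and the $J_0$-invariance of $U^{(h_i-r_i)/2}[v_i]$ gives $r_i\leq r'_{f(i)}$. Doing the same in the other direction produces $g$, and then monotonicity of the $h$'s (decreasing) versus the $r$'s (increasing) forces $g\circ f$ and $f\circ g$ to be the identity. This bundles the matching of all the $h_i$ and $r_i$ into a single combinatorial step, and in particular never needs to argue separately that $h_i=h_i'$ before proceeding.
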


\begin{proof}
Let us write $M=M(h_1, r_1; \dots; h_n, r_n)$ and $M'=M(h_1', r_1'; \dots; h_m', r_m').$ Let $v_1, \dots, v_n$, $J_0v_n, \dots, J_0v_1$ be the leaves of $M$, and $v_1', \dots, v_m', J_0'v_m', \dots, J_0'v_1'$ the leaves of $M'$. We allow for the possibility that $M$ has an invariant leaf $v_n = J_0 v_n$. Similarly, $M'$ may have an invariant leaf $v_m' = J_0' v_m'$. 

A local equivalence between $(C_*(M), J_0)$ and $(C_*(M'), J_0')$ induces a grading-preserving map on homology
$$ F: H_*(M) \to H_*(M')$$
which commutes with the involutions and the $U$-actions and is an isomorphism in sufficiently low degrees. 

Given a leaf $v_i$ for $1\leq i\leq n$, write $F(v_i)$ as a linear combination of the leaves of $M'$, with the coefficients being powers of $U$. Note that all the terms in this sum (being in the same grading) become equal after multiplication with $U^k$ for $k \gg 0$. Since $F$ is an $\ff[U]$-module map, it follows from considering $F(U^k v_i)$ that $F(v_i)$ must contain an odd number of terms. Hence, our sum cannot consist only of paired terms $U^l v_j' + U^l J_0' v_j'$.  There is at least one value of $j \in \{1, \dots, m\}$ such that exactly one of $v_j'$ and $J_0'v_j'$ appears in $F(v_i)$ (with coefficient some power of $U$), or otherwise  $v_j'$ is invariant (that is, $j=m$ and $J_0 v_m' = v_m'$) and $U^l v_j'$ appears in $F(v_i)$. We denote by $f(i)=j$ the maximal $j$ with this property.

Without loss of generality, let us assume that $U^l v_j'$, rather than its conjugate, appears in $F(v_i)$. Since $F$ is grading-preserving, we then have that
$$h_i = h_j' - 2l \leq h_j'.$$ 
Moreover, the element $U^{(h_i - r_i)/2} v_i$ is $J_0$-invariant, and hence so is its image under $F$. Again by considering $F(U^kv_i)$ for $k \gg 0$, this implies that $F(U^{(h_i - r_i)/2} v_i)$ contains an element in the central stem of $M'$. Given how we chose $j$, that element must be a multiple of $v_j'$, namely $U^{l+(h_i - r_i)/2} v_j'$. Since the grading of this is equal to $r_i$, we have that $r_i \leq r_j'$.

We deduce that there exists a function $f: \{1, \dots, n\} \to\{1, \dots, m\}$ for which
$$ h_i \leq h_{f(i)}' \ \text{ and } \ r_i \leq r_{f(i)}' \ \ \text{for all } \ i=1, \dots, n.$$

On the other hand, the local equivalence also gives a map in the other direction, from $H_*(M')$ to $H_*(M)$, with similar properties. Thus, we get a function $g: \{1, \dots, m\} \to\{1, \dots, n\}$ such that
$$ h'_j \leq h_{g(j)} \ \text{ and } \ r_j' \leq r_{g(j)} \ \ \text{for all } \ j=1, \dots, m.$$

Considering the composition $g \circ f$, we obtain $h_i \leq h_{g(f(i))}$ for all $i$. Since $M$ is monotone, this means that $i \geq g(f(i))$ for all $i$. However, we also have that  $r_i \leq r_{g(f(i))}$, which implies the opposite inequality $i \leq g(f(i))$. The only possibility is that $g(f(i))=i$, so $g \circ f$ is the identity. 

By the same argument, the composition $f \circ g$ is the identity. We thus conclude that $f$ and $g$ are inverses to each other, and in particular that $n=m$, $h_i = h_i'$ and $r_i = r_i'$ for all $i$. This shows $M$ and $M'$ are identical.
\end{proof}


\section{Tensor products}
\label{sec:tensor}
In this section, we compute the involutive correction terms for the tensor product of any number of monotone roots. For such complexes, we will be able to express $\du_{\Inv}$ and $\dl_{\Inv}$ explicitly in terms of the parameters $(r_i, h_i)$ of each of the summands. Because of Theorem~\ref{thm:monotonesubroot}, these computations easily extend to the class of symmetric graded roots in general. 

Before we begin, it will be helpful to re-cast our definition of $C_*(R)$ in a geometric form similar to the construction of lattice homology given in Section~\ref{sec:lattice}. Let $R$ be a symmetric graded root and let $g$ be any even integer. For each even generator $U^nv_i$ of $C_*(R)$ lying in grading $g$, we draw a single 0-cell as in Figure \ref{1dsublevelset}, placing these on a horizontal line from left-to-right in the same order as they appear in $C_*(R)$. For each odd generator $U^n\alpha_i$ of $C_*(R)$ in grading $g+1$, we then draw a 1-cell connecting the two 0-cells corresponding to the terms in $\partial U^n\alpha_i$. See Figure \ref{1dsublevelset}. 
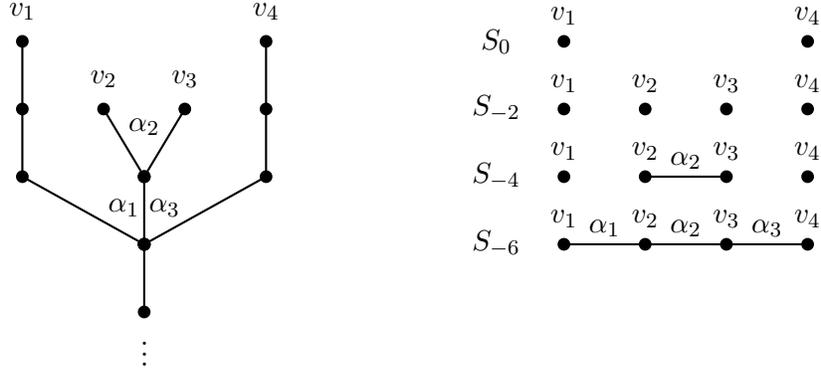
\begin{figure}[h!]
\begin{tikzpicture}[thick,scale=0.9]%

	\node[text width=0.1cm] at (0, -2.5) {\vdots};
	\draw (0, 0) node[circle, draw, fill=black!100, inner sep=0pt, minimum width=4pt] {} -- (0, -1) node[circle, draw, fill=black!100, inner sep=0pt, minimum width=4pt] {};
	\draw (0, -1) node[circle, draw, fill=black!100, inner sep=0pt, minimum width=4pt] {} -- (0, -2) node[circle, draw, fill=black!100, inner sep=0pt, minimum width=4pt] {};
	
	\draw (0, 0) node[circle, draw, fill=black!100, inner sep=0pt, minimum width=4pt] {} -- (-0.6, 1) node[circle, draw, fill=black!100, inner sep=0pt, minimum width=4pt] {};
	\draw (0, 0) node[circle, draw, fill=black!100, inner sep=0pt, minimum width=4pt] {} -- (0.6, 1) node[circle, draw, fill=black!100, inner sep=0pt, minimum width=4pt] {};
	
	\draw (0, -1) node[circle, draw, fill=black!100, inner sep=0pt, minimum width=4pt] {} -- (-1.8, 0) node[circle, draw, fill=black!100, inner sep=0pt, minimum width=4pt] {};
	\draw (0, -1) node[circle, draw, fill=black!100, inner sep=0pt, minimum width=4pt] {} -- (1.8, 0) node[circle, draw, fill=black!100, inner sep=0pt, minimum width=4pt] {};
	
	\draw (-1.8, 1) node[circle, draw, fill=black!100, inner sep=0pt, minimum width=4pt] {};
	\draw (1.8, 1) node[circle, draw, fill=black!100, inner sep=0pt, minimum width=4pt] {};
	\draw (-1.8, 2) node[circle, draw, fill=black!100, inner sep=0pt, minimum width=4pt] {};
	\draw (1.8, 2) node[circle, draw, fill=black!100, inner sep=0pt, minimum width=4pt] {};
	
	\draw (-1.8, 0) -- (-1.8, 1);
	\draw (-1.8, 1) -- (-1.8, 2);
	\draw (1.8, 0) -- (1.8, 1);
	\draw (1.8, 1) -- (1.8, 2);

	\node[label = $v_1$] at (-1.8,2) {};
	\node[label = $v_4$] at (1.8,2) {};
	\node[label = $v_2$] at (-0.6,1) {};
	\node[label = $v_3$] at (0.6,1) {};
	
	\node[label = $\alpha_2$] at (0,0.3) {};
	\node[label = $\alpha_1$] at (-0.3,-1+0.1) {};
	\node[label = $\alpha_3$] at (0.3,-1+0.1) {};

	\draw (-1.8+8, 2) node[circle, draw, fill=black!100, inner sep=0pt, minimum width=4pt, label = $v_1$] {};
	\draw (1.8+8, 2) node[circle, draw, fill=black!100, inner sep=0pt, minimum width=4pt, label = $v_4$] {};
	
	\draw (-1.8+8, 1) node[circle, draw, fill=black!100, inner sep=0pt, minimum width=4pt, label = $v_1$] {};
	\draw (-0.6+8, 1) node[circle, draw, fill=black!100, inner sep=0pt, minimum width=4pt, label = $v_2$] {};
	\draw (0.6+8, 1) node[circle, draw, fill=black!100, inner sep=0pt, minimum width=4pt, label = $v_3$] {};
	\draw (1.8+8, 1) node[circle, draw, fill=black!100, inner sep=0pt, minimum width=4pt, label = $v_4$] {};

	\draw (-1.8+8, 0) node[circle, draw, fill=black!100, inner sep=0pt, minimum width=4pt, label = $v_1$] {};
	\draw (-0.6+8, 0) node[circle, draw, fill=black!100, inner sep=0pt, minimum width=4pt, label = $v_2$] {};
	\draw (0.6+8, 0) node[circle, draw, fill=black!100, inner sep=0pt, minimum width=4pt, label = $v_3$] {};
	\draw (1.8+8, 0) node[circle, draw, fill=black!100, inner sep=0pt, minimum width=4pt, label = $v_4$] {};
	
	\draw (-1.8+8, -1) node[circle, draw, fill=black!100, inner sep=0pt, minimum width=4pt, label = $v_1$] {};
	\draw (-0.6+8, -1) node[circle, draw, fill=black!100, inner sep=0pt, minimum width=4pt, label = $v_2$] {};
	\draw (0.6+8, -1) node[circle, draw, fill=black!100, inner sep=0pt, minimum width=4pt, label = $v_3$] {};
	\draw (1.8+8, -1) node[circle, draw, fill=black!100, inner sep=0pt, minimum width=4pt, label = $v_4$] {};		
	
	\draw (-0.6+8, 0) -- (0.6+8, 0);
	\draw (-1.8+8, -1) -- (1.8+8, -1);
			
	\node[label = $S_0$] at (6-0.8,2-0.5) {};			
	\node[label = $S_{-2}$] at (6-0.8,1-0.5) {};	
	\node[label = $S_{-4}$] at (6-0.8,0-0.5) {};	
	\node[label = $S_{-6}$] at (6-0.8,-1-0.5) {};	
	
	\node[label = $\alpha_2$] at (8, 0-0.2) {};
	
	\node[label = $\alpha_2$] at (8, -1-0.2) {};
	\node[label = $\alpha_1$] at (8-1.2, -1-0.2) {};
	\node[label = $\alpha_3$] at (8+1.2, -1-0.2) {};
			
\end{tikzpicture}
\caption{Different sublevel sets for a graded root. The leaves $v_1$ and $v_4$ in this example have grading zero.}
\label{1dsublevelset}
\end{figure}

For convenience, we suppress writing the appropriate powers of $U$ and label these 0-cells and 1-cells with the vertex and angle labels $v_i$ and $\alpha_i$ of Section \ref{sec:standard}. Thus a 0-cell marked $v_i$ in grading $g$ means that (a) $\gr(v_i) \geq g$ and (b) the 0-cell in question represents the chain $U^{(\gr(v_i)-g)/2}v_i$. Similarly, a 1-cell marked $\alpha_i$ means that (a) $\gr(\alpha_i) \geq g + 1$ and (b) the 0-cell in question represents the chain $U^{(\gr(\alpha_i) - (g+1))/2}\alpha_i$.

By a slight abuse of notation, we call the (one-dimensional) cubical complex constructed above the \textit{sublevel set of weight g} and denote it by $S_g$. We give the cellular chain complex of this sublevel set a slightly modified grading by declaring a chain of dimension $i$ to have grading $g+ i$. It is clear that the chain complexes of these sublevel sets give a geometric model for $C_*(R)$. More precisely, let $C_*(S_g)$ be the graded chain complex
\[
C_*(S_g) = \bigoplus_i C_i(S_g, \mathbb{Z}/2\mathbb{Z}),
\]
where $C_i(S_g, \mathbb{Z}/2\mathbb{Z})$ is the usual $i$-dimensional cellular chain group of $S_g$ over $\mathbb{Z}/2\mathbb{Z}$ with the modified grading $g + i$. Then we have an isomorphism 
\[
C_*(R) \cong \bigoplus_{g \textit{ even}} C_*(S_g)
\]
of graded chain complexes over $\mathbb{F}[U]$, where the action of $U$ on the right is given by the induced inclusion map of $S_g$ into $S_{g-2}$. Passing to homology gives an isomorphism between $H_*(R)$ and the $\mathbb{Z}/2\mathbb{Z}$-homology of the sublevel sets. 

Now let $R_1, \ldots, R_k$ be a set of $k$ symmetric graded roots, and denote by $C_*(\Sigma)$ the tensor product complex $C_*(R_1) \otimes \cdots \otimes C_*(R_k)$. Let the dimension of any $x_1 \otimes \cdots \otimes x_k$ be given by the sum of the dimensions of each of the $x_i$. As before, for every even integer $g$ we construct a $k$-dimensional cubical complex $S_g$ by first drawing a 0-cell for each zero-dimensional generator of $C_*(\Sigma)$ in grading $g$. These are arranged in the obvious way as a subset of points in some $k$-dimensional product lattice (see Figure \ref{2dsublevelset}). For each $d$-dimensional generator of grading $g + d$ in $C_*(\Sigma)$, we likewise draw a $d$-dimensional lattice cube in such a way so that the cellular boundary coincides with the action of $\partial$ on $C_*(\Sigma)$. This gives a geometric model for $C_*(\Sigma)$. See Figure \ref{2dsublevelset}. 

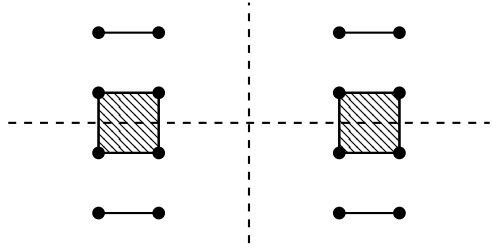
\begin{figure}[h!]
\begin{tikzpicture}[thick,scale=0.8]%

	\draw[dashed] (-4, 0) -- (4, 0);
	\draw[dashed] (0, -2) -- (0, 2);

	\draw (-2.5, 1.5) node[circle, draw, fill=black!100, inner sep=0pt, minimum width=4pt] {};
	\draw (-1.5, 1.5) node[circle, draw, fill=black!100, inner sep=0pt, minimum width=4pt] {};
	\draw (1.5, 1.5) node[circle, draw, fill=black!100, inner sep=0pt, minimum width=4pt] {};
	\draw (2.5, 1.5) node[circle, draw, fill=black!100, inner sep=0pt, minimum width=4pt] {};
		
	\draw (-2.5, 0.5) node[circle, draw, fill=black!100, inner sep=0pt, minimum width=4pt] {};
	\draw (-1.5, 0.5) node[circle, draw, fill=black!100, inner sep=0pt, minimum width=4pt] {};
	\draw (1.5, 0.5) node[circle, draw, fill=black!100, inner sep=0pt, minimum width=4pt] {};
	\draw (2.5, 0.5) node[circle, draw, fill=black!100, inner sep=0pt, minimum width=4pt] {};
	
	\draw (-2.5, -0.5) node[circle, draw, fill=black!100, inner sep=0pt, minimum width=4pt] {};
	\draw (-1.5, -0.5) node[circle, draw, fill=black!100, inner sep=0pt, minimum width=4pt] {};
	\draw (1.5, -0.5) node[circle, draw, fill=black!100, inner sep=0pt, minimum width=4pt] {};
	\draw (2.5, -0.5) node[circle, draw, fill=black!100, inner sep=0pt, minimum width=4pt] {};
	
	\draw (-2.5, -1.5) node[circle, draw, fill=black!100, inner sep=0pt, minimum width=4pt] {};
	\draw (-1.5, -1.5) node[circle, draw, fill=black!100, inner sep=0pt, minimum width=4pt] {};
	\draw (1.5, -1.5) node[circle, draw, fill=black!100, inner sep=0pt, minimum width=4pt] {};
	\draw (2.5, -1.5) node[circle, draw, fill=black!100, inner sep=0pt, minimum width=4pt] {};
	
	\draw (-2.5, 1.5) -- (-1.5, 1.5);
	\draw (-2.5, 0.5) -- (-1.5, 0.5);
	\draw (-2.5, -0.5) -- (-1.5, -0.5);
	\draw (-2.5, -1.5) -- (-1.5, -1.5);
	
	\draw (2.5, 1.5) -- (1.5, 1.5);
	\draw (2.5, 0.5) -- (1.5, 0.5);
	\draw (2.5, -0.5) -- (1.5, -0.5);
	\draw (2.5, -1.5) -- (1.5, -1.5);

	\draw (-2.5, 0.5) -- (-2.5, -0.5);
	\draw (-1.5, 0.5) -- (-1.5, -0.5);
	\draw (2.5, 0.5) -- (2.5, -0.5);
	\draw (1.5, 0.5) -- (1.5, -0.5);
	
	\draw[pattern=north west lines] (-2.5, 0.5) rectangle (-1.5,-0.5);
	\draw[pattern=north west lines] (1.5, 0.5) rectangle (2.5,-0.5);
\end{tikzpicture}
\caption{Example of a sublevel set for the connected sum of two roots ($k = 2$). Points (0-cells) represent generators $x = x_1 \otimes x_2$ where both $x_1$ and $x_2$ are leaf generators; edges (1-cells) represent generators where precisely one of $x_1$ and $x_2$ is an angle generator; squares (2-cells) represent generators where both of the $x_i$ are angle generators. Note the symmetry about each coordinate axis.}
\label{2dsublevelset}
\end{figure}

Note that every lattice cube may be labeled by $x = x_1 \otimes \cdots \otimes x_k$, where each $x_i$ is either a leaf or angle generator of $R_i$. As before, we suppress writing the necessary power of $U$, so that a lattice cube of dimension $d$ labeled by $x$ means that (a) $\gr(x) \geq g + d$ and (b) the cube in question represents the chain $U^{(\gr(x) - (g + d))/2}x$. If this holds, then we refer to $x$ as ``lying in the sublevel set $S_g$". Note also that $S_g$ is symmetric about reflection by $J_0$ in each coordinate. 

We now come to the following important lemma:
\begin{lemma} \label{lowgradinggenerators}
Let $R_1, \ldots, R_k$ be symmetric graded roots. Denote by $(C_*(\Sigma), J_0)$ the tensor product complex $C_*(R_1) \otimes \cdots \otimes C_*(R_k)$ with the reflection involution $J_0 \otimes \cdots \otimes J_0$. In sufficiently low gradings, the involutive homology $\HIm(\Sigma, J_0) := \HIm(C_*(\Sigma), J_0)$ consists of a single $U$-tower in even gradings and a single $U$-tower in odd gradings, with the action of $Q$ taking the odd tower onto the even tower. Then:
\begin{enumerate}
\item In sufficiently low even gradings, the single generator of $\HIm(\Sigma, J_0)$ is represented by $Qx$, where $x = x_1 + \cdots + x_{2n+1}$ is the sum of an odd number of points (0-cells) in $S_g$. Every representative is of this form and any two such choices are homologous.
\item In sufficiently low odd gradings, the single generator of $\HIm(\Sigma, J_0)$ is represented by a pair $(x, Qy)$, where $x$ is the sum of an odd number of 0-cells in $S_g$ and $y$ is some sum of 1-cells in $S_g$ such that $\partial y = x + J_0x$. Every representative is of this form and any two such choices are homologous.
\end{enumerate}
\end{lemma}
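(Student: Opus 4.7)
The plan is to analyze $\HIm(\Sigma, J_0)$ via the K\"unneth decomposition of $C_*(\Sigma) = \bigotimes_i C_*(R_i)$ together with the geometric sublevel-set description introduced above. First I would establish that in sufficiently low gradings, $H_*(C_*(\Sigma))$ is a single $U$-tower generated by $\theta = \theta_1 \otimes \cdots \otimes \theta_k$, where $\theta_i$ is the stem generator of $\He^-(R_i)$. Equivalently, and more usefully for what follows: for $g$ sufficiently negative the sublevel set $S_g^{(i)} \subset R_i$ lies entirely below all branching of $R_i$ and is a contractible subtree, so the product sublevel set $S_g$ is contractible. Hence $H_0(S_g) \cong \ff$ and $H_d(S_g) = 0$ for $d > 0$. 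In particular, a sum of $0$-cells in a low-grading sublevel set is a cycle that represents a nonzero class in $H_*(C_*(\Sigma))$ if and only if the number of summands is odd, and every higher-dimensional cycle is a boundary.

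Since each involution on an individual factor fixes its stem generator, the tensor-product involution $J_0$ acts as the identity on $H_*(C_*(\Sigma))$ in low gradings, so $1 + J_0$ vanishes on homology there.

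I would then unpack the mapping cone explicitly. A chain of $\CFIm$ has the form $(x, Qy)$ with cone differential $(\partial x,\, Q(1+J_0)x + Q\partial y)$, so cycles are precisely pairs satisfying $\partial x = 0$ and $\partial y = (1+J_0)x$. For part (1), in a low even grading I would take $x = 0$ and let $y$ be any cycle in the corresponding grading of $C_*(\Sigma)$; the resulting $(0, Qy)$ is nontrivial in $\HIm$ exactly when $[y] \neq 0$ in $H_*(C_*(\Sigma))$, which by the combinatorial observation above means $y$ is a sum of an odd number of $0$-cells. For part (2), in a low odd grading I would choose $x$ to be an odd sum of $0$-cells in the appropriate adjacent even grading; then $(1+J_0)x$ is nullhomologous in $C_*(\Sigma)$, so a $1$-chain $y$ with $\partial y = x + J_0 x$ exists, giving the required cycle $(x, Qy)$. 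Finally, the relation $Q \cdot (x, Qy) = (0, Qx)$ (using $Q^2 = 0$) realizes the stated $Q$-action sending the odd tower onto the even tower.

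The remaining content of the lemma---that every representative has the prescribed form and any two such are homologous---is a diagram chase in the mapping cone. Given an arbitrary cycle $(x', Qy')$ in the relevant grading, I would first adjust by a cone-boundary to put its $C_*(\Sigma)$-component into a chosen homological representative (possible since $H_*(C_*(\Sigma))$ vanishes in low odd gradings, while the even homology in low gradings is one-dimensional and represented by odd-cardinality $0$-cell sums); the $Qy$-component is then determined modulo cycles of $C_*(\Sigma)$, and since $1+J_0 = 0$ on homology, any two valid choices differ by a $\CFIm$-boundary. The main obstacle is keeping the grading shifts in the cone straight and verifying that each adjustment lives in the correct grading; once this bookkeeping is in place, the algebraic steps are routine.
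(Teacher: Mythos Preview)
Your approach is correct and rests on the same key observation as the paper: for $g$ sufficiently negative, the sublevel set $S_g$ is contractible, so $H_0(S_g)\cong\ff$ and $H_d(S_g)=0$ for $d>0$. The paper packages this via the dimension filtration on the mapping cone, obtaining a double complex and invoking the associated spectral sequence, which collapses in low gradings because of contractibility. You instead carry out the same reduction by an explicit diagram chase in the cone: kill the $C_*(\Sigma)$-component up to homology, then adjust the $Q$-component. These are equivalent arguments; yours is more hands-on, the paper's is more compressed.

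One imprecision worth tightening: you write that each $S_g^{(i)}$ ``lies entirely below all branching of $R_i$ and is a contractible subtree, so the product sublevel set $S_g$ is contractible.'' The sublevel set $S_g$ for the tensor product is \emph{not} in general the product of the individual $S_{g_i}^{(i)}$ (a cell $x_1\otimes\cdots\otimes x_k$ lies in $S_g$ according to a condition on the \emph{sum} of gradings, not on each factor separately). What is true, and what you need, is that for $g\ll 0$ every possible cell appears, so $S_g$ is the full solid $k$-dimensional box; this is exactly what the paper asserts. Your conclusion is right, but the justification should be this one rather than ``product of contractibles.''
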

\begin{proof}
We first observe that $J_0$ preserves dimension. Thus we may filter the mapping cone of $(C_*(\Sigma), J_0)$ to obtain the double complex depicted in Figure~\ref{spectralseq}. Here, we have used $C_i$ to denote the $i$-dimensional generators (of any grading) in $C_*(\Sigma)$. Consider the spectral sequence from this double complex to the associated graded of $\HIm(\Sigma, J_0)$. In sufficiently low gradings, the sublevel sets $S_g$ are contractible, being solid $k$-dimensional boxes. In these gradings the spectral sequence collapses to give an $\mathbb{F}$-summand in place of $C_0$ and $QC_0$ and zeros elsewhere. This implies that in low gradings, $\HIm(\Sigma, J_0)$ is generated (in even gradings) by some element of $QC_0$ and (in odd gradings) by some sum of elements in $C_0$ and $QC_1$. Examining the definition of the mapping cone then proves the claim.
\end{proof}

\begin{figure}[h!]
\begin{tikzcd}
\hspace{2cm}\vdots \hspace{-0.4cm}\\
C_2 \arrow[r, "Q(1 + J_0)"] \arrow [d, "\partial"] & QC_2 \arrow [d, "\partial"] \\
C_1 \arrow[r, "Q(1 + J_0)"] \arrow [d, "\partial"] & QC_1 \arrow [d, "\partial"] \\
C_0 \arrow[r, "Q(1 + J_0)"] & QC_0
\end{tikzcd}
\caption{Mapping cone of $(C_*(\Sigma), J_0)$.}
\label{spectralseq}
\end{figure}

We now come to the computation of $\du_{\Inv}$ and $\dl_{\Inv}$ for connected sums. Here we will finally make use of the class of monotone roots constructed in the previous section. The statement of our theorem is a bit cumbersome, but the rationale behind various expressions will become clear after embarking on the proof.

\begin{theorem}\label{tensorproduct}
For $1 \leq i \leq k$, let $M_i$ be the monotone root
\[
M_i = M(h_1^i, r_1^i; \ldots; h_{n_i}^i, r_{n_i}^i).
\]
Denote by $\HIm(\Sigma, J_0)$ the involutive homology of the complex $C_*(M_1) \otimes \cdots \otimes C_*(M_k)$ with respect to the tensor product involution $J_0 \otimes \cdots \otimes J_0$. Let $\du_{\Inv}(\Sigma)$ and $\dl_{\Inv}(\Sigma)$ be the involutive correction terms of $\HIm(\Sigma, J_0)$. Then
\[
\du_{\Inv}(\Sigma) = \left(\sum_{i = 1}^k h_1^i\right) + 2.
\]
For each tuple of integers $(s_1, \ldots, s_k)$ with $1 \leq s_i \leq n_i$, define
\begin{align*}
B(s_1, \ldots, s_k) = \min\{ &r_{s_1}^1 + h_{s_2}^2 + h_{s_3}^3 + \cdots + h_{s_k}^k, \\
&h_{s_1}^1 + r_{s_2}^2 + h_{s_3}^3 + \cdots + h_{s_k}^k, \\
&h_{s_1}^1 + h_{s_2}^2 + r_{s_3}^3 + \cdots + h_{s_k}^k, \\
& \hspace{2cm} \vdots \\
&h_{s_1}^1 + h_{s_2}^2 + h_{s_3}^3 + \cdots + r_{s_k}^k\}.
\end{align*}
Then
\[
\dl_{\Inv}(\Sigma) = \max_{(s_1, \ldots, s_k)} B(s_1, \ldots, s_k) + 2,
\]
where the maximum is taken over all tuples $(s_1, \ldots, s_k)$ with $1 \leq s_i \leq n_i$.
\end{theorem}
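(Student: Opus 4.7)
The plan is to compute $\du_{\Inv}(\Sigma)$ and $\dl_{\Inv}(\Sigma)$ separately, relying on the cubical sublevel-set picture of $C_*(\Sigma)$ together with Lemma~\ref{lowgradinggenerators}. For $\du_{\Inv}(\Sigma)$, I will argue that $\du_{\Inv}(\Sigma)$ equals the top grading of a non-torsion element of $\coker(1+J_0)$, plus $2$. On the one hand, such an element is bounded above by the top of $H_*(\Sigma)$, which is $\sum_i h_1^i$. On the other hand, the class of $v_1^1 \otimes v_1^2 \otimes \cdots \otimes v_1^k$ at grading $\sum_i h_1^i$ is non-torsion (it is the generator of the free summand in the Künneth decomposition of $H_*(\Sigma)$) and has non-zero image in $\coker(1+J_0)$, since it is not of the form $y + J_0 y$ for any chain $y$ at that grading. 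Its $U$-powers eventually descend to the invariant stem, where $\im(Q) = \coker(1+J_0)$, so that the element lies in $\im(Q)$ after sufficiently many $U$'s. This gives $\du_{\Inv}(\Sigma) = \sum_i h_1^i + 2$.

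For the lower bound on $\dl_{\Inv}(\Sigma)$, I will construct, for every tuple $(s_1,\ldots,s_k)$, an explicit invariant representative. Fix the tuple and let $j^\ast$ achieve $\max_j(h_{s_j}^j - r_{s_j}^j)$. Setting $a = (h_{s_{j^\ast}}^{j^\ast} - r_{s_{j^\ast}}^{j^\ast})/2$, form the $0$-chain
\[
z \;=\; U^a\bigl(v_{s_1}^1 \otimes v_{s_2}^2 \otimes \cdots \otimes v_{s_k}^k\bigr)
\]
at grading $B(s_1,\ldots,s_k) = \sum_i h_{s_i}^i - 2a$. Letting $x = v_{s_1}^1 \otimes \cdots \otimes v_{s_k}^k$, the characteristic-$2$ telescoping identity
\[
x + J_0 x \;=\; \sum_{j=1}^{k}\Bigl(\bigotimes_{i<j} v_{s_i}^i\Bigr) \otimes \bigl(v_{s_j}^j + J_0 v_{s_j}^j\bigr) \otimes \Bigl(\bigotimes_{i>j} J_0 v_{s_i}^i\Bigr)
\]
expresses $x + J_0 x$ as a sum of $k$ terms, the $j$-th of which carries the symmetric pair $v_{s_j}^j + J_0 v_{s_j}^j$ in its $j$-th factor. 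In $C_*(M_j)$ this symmetric pair becomes a boundary after multiplication by $U^{(h_{s_j}^j - r_{s_j}^j)/2}$, since the two leaves $v_{s_j}^j, J_0 v_{s_j}^j$ meet along the stem at grading $r_{s_j}^j$; the bounding $1$-chain is the telescoped sum of angles $\alpha_{s_j}^j, \alpha_{s_j+1}^j, \ldots, \alpha_{2n_j-s_j}^j$ with appropriate $U$-powers. The choice of $a$ guarantees $a \geq (h_{s_j}^j - r_{s_j}^j)/2$ for every $j$, so $U^a(x + J_0 x)$ is a boundary termwise. Thus $z$ represents a non-torsion $J_0$-invariant class in $H_*(\Sigma)$; by the second part of Lemma~\ref{lowgradinggenerators} this lifts to an element $(z, Qy) \in \HIm_{B(s_1,\ldots,s_k)+1}(\Sigma)$, yielding $\dl_{\Inv}(\Sigma) \geq B(s_1,\ldots,s_k) + 2$ for every tuple, and hence the claimed lower bound upon maximizing.

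The matching upper bound will be the main technical obstacle: one must show that no non-torsion $J_0$-invariant class in $H_*(\Sigma)$ sits in grading higher than $\max_{(s_1,\ldots,s_k)} B(s_1,\ldots,s_k)$. The plan is to exploit the Künneth decomposition $H_*(\Sigma) \cong \bigotimes_i H_*(M_i) \oplus (\text{Tor})$ together with the $\ff[U]$-module structure of $H_*(M_i)$, which splits as one free summand plus torsion summands naturally indexed by the levels $s_i \in \{1,\ldots,n_i\}$. Only the all-free tensor-summand contributes non-torsion classes, and any invariant element in this summand decomposes as a linear combination of symmetrized tensors $U^a(v_{s_1}^1 \otimes \cdots \otimes v_{s_k}^k)$ for various tuples. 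For each such tuple-summand, the telescoping identity above (now run in reverse) shows that $(1 + J_0)$ cannot vanish until $a$ is large enough to bound every symmetric pair $v_{s_j}^j + J_0 v_{s_j}^j$ simultaneously, which forces $a \geq \max_j(h_{s_j}^j - r_{s_j}^j)/2$ and hence grading at most $B(s_1,\ldots,s_k)$. The delicate step is keeping track of the $J_0$-action through the Künneth Tor corrections and verifying that no further invariant non-torsion classes arise from mixing between tuple-summands; I expect this to reduce to a direct check using the monotonicity $h_1^i > \cdots > h_{n_i}^i$, $r_1^i < \cdots < r_{n_i}^i$ in each factor.
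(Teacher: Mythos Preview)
Your treatment of $\du_{\Inv}$ and of the lower bound for $\dl_{\Inv}$ is essentially the paper's argument: the paper also takes $x = v_1^1\otimes\cdots\otimes v_1^k$ for $\du_{\Inv}$, and for the lower bound it constructs, for each tuple $(s_1,\ldots,s_k)$, a ``rectangular'' path $\gamma$ in $S_{B(s_1,\ldots,s_k)}$ from $x = v_{s_1}^1\otimes\cdots\otimes v_{s_k}^k$ to $J_0 x$, one coordinate at a time. Your telescoping identity is exactly the algebraic encoding of that path.

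The gap is in the upper bound for $\dl_{\Inv}$. You correctly identify what must be shown, but the proposed mechanism does not do it. The telescoping identity exhibits \emph{one} $1$-chain bounding $U^a(x+J_0x)$; running it ``in reverse'' does not show that \emph{no} $1$-chain of higher grading bounds it. Concretely, $x$ need not be a single point, and even when it is, the bounding $1$-chain $y$ need not be the rectangular path: it can be any $1$-chain in $S_g$ with $\partial y = x + J_0 x$, and such $y$ may route through leaves with indices different from $s_1,\ldots,s_k$. Your K\"unneth plan would have to control all such $y$ simultaneously, and the ``mixing between tuple-summands'' you flag is exactly the obstruction; it does not reduce to a monotonicity check on each factor separately, because the relevant constraint is on the grading of individual edges of $y$, and an edge at position $(j_1,\ldots,j_k)$ mixes $h$'s from some factors with an $r$ from another.

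The paper avoids K\"unneth entirely and argues geometrically at the sublevel-set level. First, a parity-and-path argument reduces any pair $(x,Qy)$ in $S_g$ to the case where $S_g$ contains a genuine path $\gamma$ between a single symmetric pair of points. Then one takes the \emph{smallest} symmetric box $I(s_1,\ldots,s_k)$ containing $\gamma$. Minimality of the box in the $j$-th coordinate forces $\gamma$ to contain an edge whose $j$-th factor is $\alpha_{s_j}^j$ (or its reflection); since the other factors of that edge lie inside the box, monotonicity gives each of their gradings $\le h_{s_i}^i$, so the edge has grading at most $(r_{s_j}^j+1)+\sum_{i\neq j}h_{s_i}^i$. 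Doing this for each $j$ yields $g+1 \le B(s_1,\ldots,s_k)+1$, hence the desired upper bound. This bounding-box step is the missing idea in your sketch; without it (or something equivalent), the upper bound is not established.
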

\begin{proof}
We begin with $\du_{\Inv}$. According to its definition in Section~\ref{sec:hfi}, we search for elements in $\HIm(\Sigma, J_0)$ that have nonzero $U$-powers $U^n$ for every $n \geq 0$ and eventually lie in the image of $Q$. By Lemma \ref{lowgradinggenerators}, such an element must be represented by a chain $Qx$, where $x$ is the sum of an odd number of points in some sublevel set. Consider the single point $x = v_1^1 \otimes v_1^2 \otimes \cdots \otimes v_1^k$. Then $x$ has grading $h_1^1 + h_1^2 + \cdots + h_1^k$. There are no zero-dimensional generators of higher grading in $C_*(\Sigma)$, since each $v_1^i$ has maximal grading amongst the zero-dimensional generators of $C_*(M_i)$. This proves the claim for $\du_{\Inv}$. 

We now turn to $\dl_{\Inv}$. According to Lemma \ref{lowgradinggenerators}, we search for sublevel sets containing a pair $(x, Qy)$, where $x$ is the sum of an odd number 0-cells and $y$ is a sum of 1-cells for which $\partial y = x + J_0x$. Fix any tuple of integers $(s_1, \ldots, s_k)$ with $1 \leq s_i \leq n_i$. We begin by explicitly producing such a pair $(x, Qy)$ in the sublevel set of weight $B(s_1, \ldots, s_k)$. Since this element has grading $B(s_1, \ldots, s_k) + 1$ in $\HIm(\Sigma, J_0)$, if we can do this for every choice of $(s_1, \ldots, s_k)$, this will imply that $\dl_{\Inv}$ is bounded below by
\[
\dl_{\Inv}(\Sigma) \geq \max_{(s_1, \ldots, s_k)} B(s_1, \ldots, s_k) + 2.
\]
Let $x$ be the single point $x = v^1_{s_1} \otimes v^2_{s_2} \otimes \cdots \otimes v^k_{s_k}$. To see that this lies in $S_{B(s_1, \ldots, s_k)}$, observe that the grading of $x$ is given by $h^1_{s_1} + h^2_{s_2} + \cdots + h^k_{s_k}$. Since for each $M_i$, the $r$-parameters are all less than or equal to the $h$-parameters, we certainly have $\gr(x) \geq B(s_1, \ldots, s_k)$. Hence $x$ lies in $S_{B(s_1, \ldots, s_k)}$.

We now produce a path $\gamma$ in $S_{B(s_1, \ldots, s_k)}$ from $x$ to $J_0x$ which will serve as the desired $y$. (Here, by a \textit{path} we mean a sum of 1-cells which topologically constitutes a simple curve.) First consider the straight-line path $\gamma'$ from $x$ to the point
\[
x' = (J_0v^1_{s_1}) \otimes v^2_{s_2} \otimes \cdots \otimes v^k_{s_k}
\]
consisting of the edges 
\[
\sum_{i = s_1}^{2n_1-s_1-1} \alpha^1_{i} \otimes v^2_{s_2} \otimes \cdots \otimes v^k_{s_k}.
\]
We claim that this path lies in $S_{B(s_1, \ldots, s_k)}$. Indeed, the first and last edges in this path have grading
\[
(r_{s_1}^1 + 1) + h_{s_2}^2 + h_{s_3}^3 + \cdots + h_{s_k}^k
\] 
and thus lie in $S_{B(s_1, \ldots, s_k)}$. Because $M_1$ is monotone, the gradings of angle generators in $C_*(M_1)$ increase as we move towards the stem. Hence the intermediate edges in the above path all have gradings greater than that of the first and last, and thus also lie in $S_{B(s_1, \ldots, s_k)}$. 

Now let
\[
x^{(i)} = (J_0v^1_{s_1}) \otimes (J_0v^2_{s_2}) \otimes \cdots \otimes (J_0v^i_{s_i}) \otimes v^{i+1}_{s_{i+1}} \otimes v^{i+2}_{s_{i+2}} \otimes \cdots \otimes v^k_{s_k}.
\]
We similarly have straight-line paths from $x^{(i)}$ to $x^{(i+1)}$ lying in $S_{B(s_1, \ldots, s_k)}$ for each $i$. Concatenating these together gives the desired path $\gamma$ from $x$ to $J_0x$. Since this construction holds for any tuple $(s_1, \ldots, s_k)$, we have
\[
\dl_{\Inv}(\Sigma) \geq \max_{(s_1, \ldots, s_k)} B(s_1, \ldots, s_k) + 2,
\]
as claimed. See Figure \ref{rectpath} for a schematic picture of $\gamma$. 

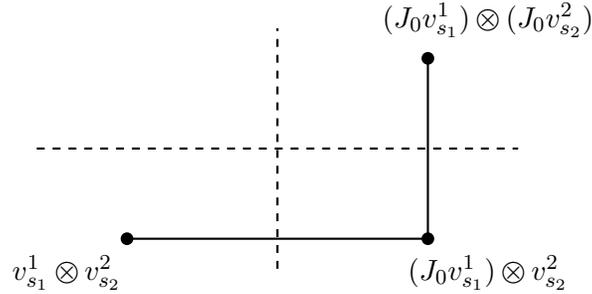
\begin{figure}[h!]
\begin{tikzpicture}[thick,scale=0.8]%

	\draw[dashed] (-4, 0) -- (4, 0);
	\draw[dashed] (0, -2) -- (0, 2);

	\draw (-2.5, -1.5) node[circle, draw, fill=black!100, inner sep=0pt, minimum width=4pt] {};
	\draw (2.5, 1.5) node[circle, draw, fill=black!100, inner sep=0pt, minimum width=4pt] {};
	
	\draw (2.5, -1.5) node[circle, draw, fill=black!100, inner sep=0pt, minimum width=4pt] {};
	
	\draw (-2.5, -1.5) -- (2.5, -1.5);
	\draw (2.5, -1.5) -- (2.5, 1.5);	
	
	\node[label = $v_{s_1}^1 \otimes v_{s_2}^2$] at (-2.5-1, -1.5-1.2) {};
	\node[label = $(J_0v_{s_1}^1) \otimes v_{s_2}^2$] at (2.5+1, -1.5-1.2) {};
	\node[label = $(J_0v_{s_1}^1) \otimes (J_0v_{s_2}^2)$] at (2.5+1, 1.5) {};	
\end{tikzpicture}
\caption{Example of $\gamma$.}
\label{rectpath}
\end{figure}
\noindent
The reason the above argument does not establish equality is that not every pair $(x, Qy)$ must consist of a symmetric pair of points together with a ``rectangular" path between them. Indeed, in general $x$ need not be a single point and $y$ need not be a simple curve. Even if this were the case, it is not clear that $y$ would have to be of the form presented in Figure \ref{rectpath}. We thus use a slightly different argument to prove the reverse inequality. 

Let  $(s_1, \ldots, s_k)$ be a tuple of integers with $1 \leq s_i \leq n_i$. We say that a point $v^1_{j_1} \otimes v^2_{j_2} \otimes \cdots \otimes v^k_{j_k}$ \textit{lies within the box} $I(s_1, \ldots, s_k)$ if $s_i \leq j_i \leq 2n_i + 1- s_i$ for each $i$. Geometrically, this means that each coordinate $v^i_{j_i}$ lies between $v^i_{s_i}$ and its reflection. We say that a path $\gamma$ is \textit{bounded by} $I(s_1, \ldots, s_k)$ if each vertex in the path is contained in $I(s_1, \ldots, s_k)$. See Figure \ref{boundedpath}. 

\begin{figure}[h!]
\begin{tikzpicture}[thick,scale=0.8]%

	\draw[dashed] (-4, 0) -- (4, 0);
	\draw[dashed] (0, -2) -- (0, 2);

	\draw (-1.5, 0.5) node[circle, draw, fill=black!100, inner sep=0pt, minimum width=4pt] {};
	\draw (1.5, -0.5) node[circle, draw, fill=black!100, inner sep=0pt, minimum width=4pt] {};
	
	\draw (-1.5, 0.5) -- (-0.5, 0.5);
	\draw (-0.5, 0.5) -- (-0.5, -1.5);
	\draw (-0.5, -1.5) -- (2.5, -1.5);
	\draw (2.5, -1.5) -- (2.5, -0.5);
	\draw (2.5, -0.5) -- (1.5, -0.5);
	
	\draw[dotted] (-2.5, -1.5) rectangle (2.5,1.5);
	
	\node[label = $v_{s_1}^1 \otimes v_{s_2}^2$] at (-2.5-1, -1.5-1.2) {};
	\node[label = $v_{s_1}^1 \otimes (J_0v_{s_2}^2)$] at (-2.5-1, 1.5) {};
	\node[label = $(J_0v_{s_1}^1) \otimes v_{s_2}^2$] at (2.5+1, -1.5-1.2) {};
	\node[label = $(J_0v_{s_1}^1) \otimes (J_0v_{s_2}^2)$] at (2.5+1, 1.5) {};			

\end{tikzpicture}
\caption{A path $\gamma$ bounded by $I(s_1, s_2)$.}
\label{boundedpath}
\end{figure}

Now suppose that $S_g$ is a sublevel set containing a pair $(x, Qy)$ for which $\partial y = x + J_0x$, as in Lemma \ref{lowgradinggenerators}. For the moment, let us assume that we are in the simplest case where $x$ consists of a single point and $y$ consists of a path $\gamma$ between $x$ and $J_0x$. Let $(s_1, \ldots, s_k)$ be the largest possible integers $1 \leq s_i \leq n_i$ for which $\gamma$ is bounded by $I(s_1, \ldots, s_k)$. Geometrically, this corresponds to finding the smallest possible symmetric ``box" into which $\gamma$ fits, as in Figure \ref{boundedpath}. We claim that $\gamma$ must either contain some edge of the form
\[
\alpha^1_{s_1} \otimes v^2_{j_2} \otimes v^3_{j_3} \otimes \cdots \otimes v^k_{j_k}
\]
or some edge of the form
\[
(J_0\alpha^1_{s_1}) \otimes v^2_{j_2} \otimes v^3_{j_3} \otimes \cdots \otimes v^k_{j_k}.
\]
This follows from the minimality of the box $I(s_1, \ldots, s_k)$ and the fact that $\gamma$ connects $x$ with $J_0x$. Indeed, $\gamma$ must either contain a point of the form 
\[
v^1_{s_1} \otimes v^2_{j_2} \otimes v^3_{j_3} \otimes \cdots \otimes v^k_{j_k}
\]
or a point of the form
\[
(J_0v^1_{s_1}) \otimes v^2_{j_2} \otimes v^3_{j_3} \otimes \cdots \otimes v^k_{j_k}
\]
by the fact that $s_1$ is maximal. On the other hand, the first coordinates of points in $\gamma$ cannot all be either $v^1_{s_1}$ or $J_0v^1_{s_1}$, since $\gamma$ connects $x$ with $J_0x$ and the first coordinates of these two points differ. This implies that $\gamma$ contains an edge of the form described above. 

Now observe that the grading of this edge is given by
\[
(r_{s_1}^1 + 1) + h_{j_2}^2 + h_{j_3}^3 + \cdots + h_{j_k}^k.
\]
Since all of the $R_i$ are monotone and $\gamma$ stays entirely within $I(s_1, \ldots, s_k)$, we additionally have that $h^i_{j_i} \leq h^i_{s_i}$ for each $i$. Hence the above expression is less than or equal to
\[
(r_{s_1}^1 + 1) + h_{s_2}^2 + h_{s_3}^3 + \cdots + h_{s_k}^k.
\]
This bounds $g+1$ from above, since $S_g$ contains an edge if and only if the grading of that edge is greater than or equal to $g+1$. Repeating this argument for each of the $k-1$ other indices yields
\[
g + 1 \leq  B(s_1, \ldots, s_k) + 1.
\]

Now suppose that we are in the situation where $x$ is the sum of multiple 0-cells and $y$ is not necessarily a simple curve. We claim that without loss of generality, we may still assume that $S_g$ contains a path $\gamma$ between a symmetric pair of points, as discussed above. To see this, first observe that a straightforward parity argument shows $x + J_0x$ must be equal to the sum of an odd number of symmetric pairs of points. More precisely, some symmetric pairs of points might appear in the expansion of $x$ and thus cancel each other out in $x + J_0x$, but because $x$ is the sum of an odd number of points, an odd number of symmetric pairs in $x + J_0x$ must remain.

Now, if $x + J_0x$ is equal to the sum of a single symmetric pair of points, then clearly $y$ must contain a path between these even if $y$ itself is not a simple curve. Thus, suppose that this is not the case. Fix any 0-cell $p$ appearing in the expansion of $x + J_0x$. If $y$ contains a path $\gamma$ from $p$ to $J_0p$, then we are done. If not, then $y$ must contain a path $\gamma$ from $p$ to some other 0-cell $q \neq J_0p$ appearing in the expansion of $x + J_0x$. Set 
\[
x' = x + p + q
\]
and 
\[
y' = y + \gamma + J_0\gamma.
\]
Then $(x', Qy')$ is another pair in $S_g$ for which $\partial y' = x' + J_0x'$. However, $x' + J_0x'$ now contains two fewer symmetric pairs than $x + J_0x$, since both $p + J_0p$ and $q + J_0q$ appear in the expansion of $x + J_0x$. Repeating this procedure inductively reduces to the case when $x + J_0x$ is equal to the sum of a single symmetric pair of points and establishes the claim.

We have thus shown that if $S_g$ contains a pair $(x, Qy)$ as described in Lemma \ref{lowgradinggenerators}, then $g$ is bounded above by $B(s_1, \ldots, s_k) + 1$ for some $(s_1, \ldots, s_k)$. Hence certainly
\[
\dl_{\Inv}(\Sigma) \leq \max_{(s_1, \ldots, s_k)} B(s_1, \ldots, s_k) + 2,
\]
completing the proof.
\end{proof}


\section{Applications}
\label{sec:app}

\subsection{Involutive correction terms of connected sums} 
\label{sec:dssums}
We now re-phrase the results of the previous section in terms of connected sums of actual AR manifolds. Let $Y$ be a three-manifold given as the oriented boundary of an almost-rational plumbing, and let $\s = [k]$ be a self-conjugate $\spinc$-structure on $Y$. Denote by $R$ the (shifted) graded root $R_k[\sigma + 2]$. As in Section \ref{sec:monotone}, $R$ has a monotone subroot $M$ parameterized by some tuple of rational numbers $M = M(h_1, r_1; \ldots; h_n, r_n)$. These parameters are related to $d$ and $\bar{\mu}$ by
\[
\du(Y, s) = d(Y, \s) = h_1 + 2
\]
and
\[
\dl(Y, s) = -2\bar{\mu}(Y, \s) = r_n + 2,
\]
as can be seen from the proof of Theorem~\ref{thm:ds}. Note that if $M$ only has two parameters $h_1$ and $r_1$, then $M$ is evidently specified by $d$ and $\bar{\mu}$. 

\begin{remark}
Let $R$ be a symmetric graded root, and $R^r$ its vertical reflection, which is a downwards-opening graded root. Let $v$ be the vertex of minimal grading in $R^r$ which is invariant under $\inv$. In \cite[Section 5.2]{Stoffregen}, Stoffregen defined $R^r$ to be of \textit{projective type} if there exists a vertex $w$ of minimal grading and a path from $v$ to $w$ that is grading-decreasing at each step. It is not hard to see that this corresponds precisely to the case when $n = 1$ above.
\end{remark}

As described in the Introduction, it will be helpful for us to re-parameterize $M$ in general by defining the quantities 
\[
d_i(Y, \s) = h_i + 2
\] 
and
\[
2\bar{\mu}_i(Y, \s) = - r_i - 2,
\]
for $i = 1, \ldots, n$, so that $d = d_1$ and $\bar{\mu} = \bar{\mu}_n$. In addition, we define
\[
2\tdelta_i(Y, \s) = d_i(Y, \s) + 2\bar \mu_i(Y, \s) = h_i - r_i.
\]
This parameterization is not particularly special, but we re-phrase our results in this language to more closely correspond with e.g.\ Theorem 1.4 of \cite{Stoffregen2}. Note that, in view of Theorems~\ref{thm:monotonesubroot} and \ref{thm:localeq}, the local equivalence class of $(\CFm(Y, \s), \inv)$ is determined by the quantities $d_i$ and $\bar \mu_i$.

We now prove a few of the results advertised in the Introduction.

\begin{proof}[Proof of Theorem~\ref{thm:dssums}]
According to Theorem 1.1 of \cite{HMZ}, the involutive Floer complex of a connected sum is homotopy equivalent to the (grading shifted) tensor product of the involutive complexes of the individual summands. Moreover, according to Lemma 8.7 of \cite{HMZ}, the relation of local equivalence is preserved under tensor product. Putting these together with Theorem~\ref{thm:standard}, we thus have a local equivalence between the pairs
\[
(C_*(\Sigma)[-2(k-1)], J_0) \cong (\CFm(Y_1 \# \dots \# Y_k, \s_1 \# \dots \# \s_k), \inv),
\]
where $C_*(\Sigma)$ is the tensor product complex of Theorem \ref{tensorproduct}. The theorem immediately follows by taking into account this grading shift and substituting in our new parameters. Note that $d_1(Y_i, \s_i) = d(Y_i, \s_i)$.
\end{proof}

\begin{proof}[Proof of Corollary~\ref{cor:dssums}]
Fix any $N(s_1, \ldots, s_k)$ as in Theorem \ref{thm:dssums}. By replacing the maximum over $i$ in the original definition of $N(s_1, \ldots, s_k)$ with the particular value $i = k$, we obtain the inequality
\[
N(s_1, \ldots, s_k) \leq \left( \sum_{i = 1}^k d_{s_i}(Y_i, \s_i) \right)- \left( 2\tdelta_{s_k}(Y_k, \s_k) \right).
\]
The right-hand side is then equal to
\[
\left( \sum_{i = 1}^{k-1} d_{s_i}(Y_i, \s_i) \right)- 2\bar{\mu}_{s_k}(Y_k, \s_k),
\]
which in turn is less than or equal to
\[
\Bigl( \sum_{i=1}^{k-1} d(Y_i, \s_i)\Bigr) - 2\bar \mu(Y_k, \s_k).
\]
Where here we have used the fact that $d = d_1 \geq d_i$ and $\bar{\mu} = \bar{\mu}_n \leq \bar{\mu}_i$. 

Applying Theorem~\ref{thm:dssums}(b), we get
\begin{equation}
\label{eq:dlmu}
\dl(Y_1 \# \dots \# Y_k, \s_1 \# \dots \# \s_k) \leq\Bigl( \sum_{i=1}^{k-1} d(Y_i, \s_i)\Bigr) - 2\bar \mu(Y_k, \s_k),
\end{equation}
as desired.

Now suppose that all of the $(Y_i, \s_i)$ are of projective type. Then the only valid tuple $(s_1, \ldots, s_k)$ in Theorem \ref{thm:dssums} is $(1, 1, \ldots, 1)$. Moreover, for each $i$ we have $\tdelta_1(Y_i, \s_i) = \tdelta(Y_i, \s_i)$ since $\bar{\mu}_1(Y_i, \s_i) = \bar{\mu}(Y_i, \s_i)$. It follows that we get equality in \eqref{eq:dlmu}. 
\end{proof}

\begin{proof}[Proof of Corollary~\ref{cor:stab}]
We first observe that for self-connected sums, to compute $\dl$ in Theorem \ref{thm:dssums}(b) it suffices to consider the maximum of $N(s_1, \ldots, s_k)$ over homogeneous tuples with every coordinate equal. Indeed, fix any $(s_1, \ldots, s_k)$, and suppose that $s_j$ is the least element of $s_1, \ldots, s_k$. Then
\[
N(s_1, \ldots, s_k) \leq \left( \sum_{i = 1}^k d_{s_i}(Y, \s) \right)- \left( 2\tdelta_{s_j}(Y, \s) \right),
\]
where we have replaced the maximum over $i$ in the original definition of $N(s_1, \ldots, s_k)$ with the particular value $i = j$. This is less than or equal to
\[
k \cdot d_{s_j}(Y, \s) - 2\tdelta_{s_j}(Y, \s) = N(s_j, \ldots, s_j).
\]
Hence it suffices to maximize over homogenous tuples, proving the claim. 

Now suppose that $k$ is large. All of the $\tdelta_i$ are bounded in magnitude, while $d = d_1$ is greater than any of the other $d_i$. It is thus clear that as $k$ grows to infinity, the expression in the corollary will eventually stabilize to $k \cdot d(Y, s) - 2\tdelta_1(Y, s)$. This completes the proof.
\end{proof}

\subsection{Mixing orientations}
Given an $\inv$-complex $(C, \inv)$, there is a dual $\inv$-complex $(C^\vee, \inv^\vee)$; cf. \cite[Section 8.3]{HMZ}. When $(C, \inv)$ is the Heegaard Floer $\inv$-complex for some rational homology sphere $(Y, \s)$,  the results of \cite[Section 4.2]{HMinvolutive} show that its dual is the Heegaard Floer $\inv$-complex for the same manifold, with the opposite orientation.

Since Theorem~\ref{thm:standard} already gives models for the Heegaard Floer $\inv$-complexes associated to AR plumbed $3$-manifolds, by dualizing we get models for the Heegaard Floer $\inv$-complexes of those manifolds with the opposite orientation. From here, by tensoring and then taking the involutive homology, we can calculate $\HFIm$ for any connected sum of AR plumbed $3$-manifolds, where the summands may come with either orientation. We do not have closed formulas for $\dl$ and $\du$ in this case, but we can do {\em ad hoc} computations in any particular example.

\begin{proof}[Proof of Theorem~\ref{thm:AllDifferent}]
We will make use of the calculations in  \cite[Section 9.2]{HMZ}, for the manifold 
$$Y' =S^3_{-3}(T_{2,7}) \# S^3_{-3}(T_{2,7}) \# S^3_{5}(-T_{2,11})$$
This differs from $Y$ in that we take large surgeries on the respective knots, instead of $\pm 1$ surgeries. The end result of the calculations there was \cite[Proposition 1.5]{HMZ}, which says that
$$ \dl(Y') = -2, \quad \quad d(Y') = 0, \quad \quad \du(Y') = 2.$$

In our setting, note that $\Sigma(2,7,15) = S^3_{-1}(T_{2,7})$ is of projective type, with $d=0$ and $\bar \mu =2$ (cf. \cite[Section 5]{NemethiGRS} and \cite[Section 5.2]{Stoffregen}). Thus, for the purposes of computing involutive correction terms, we can replace $\CFm(\Sigma(2,7,15))$ with a locally equivalent $\inv$-complex, associated to the simple graded root that starts in degree $-2$ and splits in degree $-6$. A picture of this complex is shown in Figure~\ref{fig:AllDifferent}(a). Observe that this is the same as the complex used to compute $\CFm(S^3_{-3}(T_{2,7}))$ in \cite[Figure 14]{HMZ}, except shifted upward in grading by $1/2$.

Similarly, $\Sigma(2,11,23)= S^3_{-1}(T_{2,11})$ is of projective type, with $d=0$ and $\bar \mu = 3$. We replace $\CFm(\Sigma(2,11,23))$ with the locally equivalent $\inv$-complex shown in  Figure~\ref{fig:AllDifferent}(b). Furthermore, $\CFm(-\Sigma(2,11,23))=\CFm(S^3_1(-T_{2,11}))$ is the dual to $\CFm(\Sigma(2,11,23)$ in the sense of $\inv$-complexes; cf. \cite[Theorem 1.8 and Section 8.3]{HMZ}. When we take the dual of the complex in Figure~\ref{fig:AllDifferent}(b) we obtain the one in Figure~\ref{fig:AllDifferent}(c). This turns out to be the same as the complex used to compute $\CFm(S^3_5(-T_{2,11}))$ in \cite[Figure 20]{HMZ},  except shifted downward by $1$.

\begin {figure}
\begin {center}
\begin{picture}(0,0)%
\includegraphics{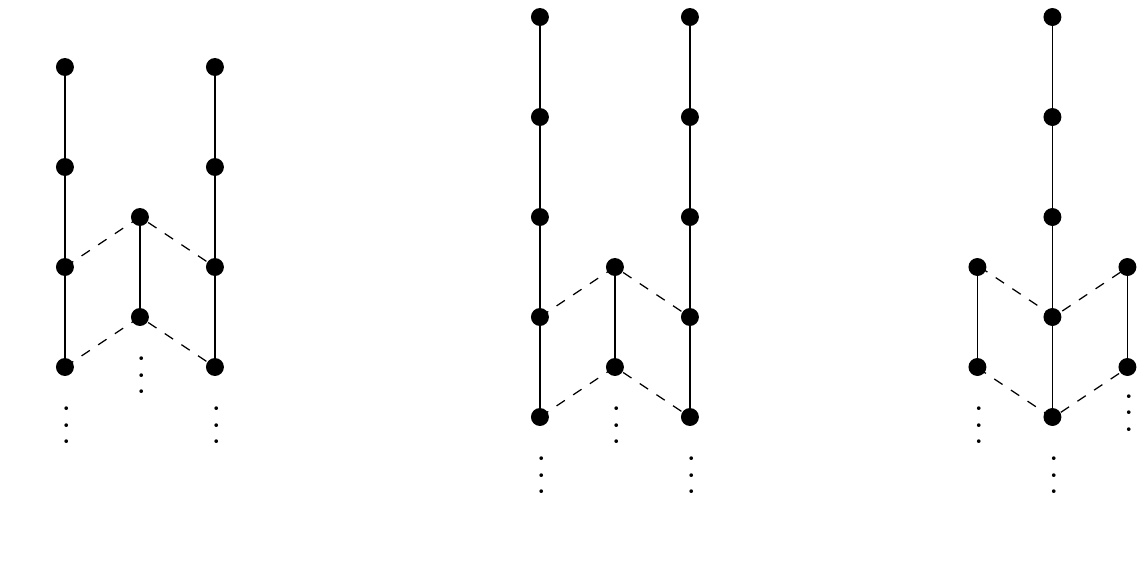}%
\end{picture}%
\setlength{\unitlength}{3158sp}%
\begingroup\makeatletter\ifx\SetFigFont\undefined%
\gdef\SetFigFont#1#2#3#4#5{%
  \reset@font\fontsize{#1}{#2pt}%
  \fontfamily{#3}\fontseries{#4}\fontshape{#5}%
  \selectfont}%
\fi\endgroup%
\begin{picture}(6823,3462)(811,-3739)
\put(6301,-1636){\makebox(0,0)[lb]{\smash{{\SetFigFont{10}{12.0}{\rmdefault}{\mddefault}{\updefault}{\color[rgb]{0,0,0}$-1$}%
}}}}
\put(6301,-2236){\makebox(0,0)[lb]{\smash{{\SetFigFont{10}{12.0}{\rmdefault}{\mddefault}{\updefault}{\color[rgb]{0,0,0}$-3$}%
}}}}
\put(6301,-2836){\makebox(0,0)[lb]{\smash{{\SetFigFont{10}{12.0}{\rmdefault}{\mddefault}{\updefault}{\color[rgb]{0,0,0}$-5$}%
}}}}
\put(3676,-436){\makebox(0,0)[lb]{\smash{{\SetFigFont{10}{12.0}{\rmdefault}{\mddefault}{\updefault}{\color[rgb]{0,0,0}$-2$}%
}}}}
\put(3676,-1036){\makebox(0,0)[lb]{\smash{{\SetFigFont{10}{12.0}{\rmdefault}{\mddefault}{\updefault}{\color[rgb]{0,0,0}$-4$}%
}}}}
\put(3676,-1636){\makebox(0,0)[lb]{\smash{{\SetFigFont{10}{12.0}{\rmdefault}{\mddefault}{\updefault}{\color[rgb]{0,0,0}$-6$}%
}}}}
\put(3676,-2236){\makebox(0,0)[lb]{\smash{{\SetFigFont{10}{12.0}{\rmdefault}{\mddefault}{\updefault}{\color[rgb]{0,0,0}$-8$}%
}}}}
\put(6451,-436){\makebox(0,0)[lb]{\smash{{\SetFigFont{10}{12.0}{\rmdefault}{\mddefault}{\updefault}{\color[rgb]{0,0,0}$3$}%
}}}}
\put(6451,-1036){\makebox(0,0)[lb]{\smash{{\SetFigFont{10}{12.0}{\rmdefault}{\mddefault}{\updefault}{\color[rgb]{0,0,0}$1$}%
}}}}
\put(3526,-2836){\makebox(0,0)[lb]{\smash{{\SetFigFont{10}{12.0}{\rmdefault}{\mddefault}{\updefault}{\color[rgb]{0,0,0}$-10$}%
}}}}
\put(826,-2536){\makebox(0,0)[lb]{\smash{{\SetFigFont{10}{12.0}{\rmdefault}{\mddefault}{\updefault}{\color[rgb]{0,0,0}$-8$}%
}}}}
\put(826,-1936){\makebox(0,0)[lb]{\smash{{\SetFigFont{10}{12.0}{\rmdefault}{\mddefault}{\updefault}{\color[rgb]{0,0,0}$-6$}%
}}}}
\put(826,-1336){\makebox(0,0)[lb]{\smash{{\SetFigFont{10}{12.0}{\rmdefault}{\mddefault}{\updefault}{\color[rgb]{0,0,0}$-4$}%
}}}}
\put(826,-736){\makebox(0,0)[lb]{\smash{{\SetFigFont{10}{12.0}{\rmdefault}{\mddefault}{\updefault}{\color[rgb]{0,0,0}$-2$}%
}}}}
\put(1576,-3661){\makebox(0,0)[lb]{\smash{{\SetFigFont{10}{12.0}{\rmdefault}{\mddefault}{\updefault}{\color[rgb]{0,0,0}$(a)$}%
}}}}
\put(4426,-3661){\makebox(0,0)[lb]{\smash{{\SetFigFont{10}{12.0}{\rmdefault}{\mddefault}{\updefault}{\color[rgb]{0,0,0}$(b)$}%
}}}}
\put(7051,-3661){\makebox(0,0)[lb]{\smash{{\SetFigFont{10}{12.0}{\rmdefault}{\mddefault}{\updefault}{\color[rgb]{0,0,0}$(c)$}%
}}}}
\end{picture}%

\caption {Complexes locally equivalent to: (a) $\CFm(\Sigma(2,7,15))$; (b) $\CFm(\Sigma(2,11,23))$; (c) $\CFm(-\Sigma(2,11,23))$. Solid lines represent the action of $U$, and dashed lines represent the action of $\del$, as usual. Homological degrees are indicated on the left side of each picture. }
\label{fig:AllDifferent}
\end {center}
\end {figure}

This implies that the calculations in  \cite[Section 9.2]{HMZ} work for $Y$ just as for $Y'$, with a total degree shift of $1/2 + 1/2 -1=0$. Hence, the correction terms $\dl, d, \du$ are the same for $Y$ as for $Y'$.
\end{proof}

\subsection{An infinite rank subgroup in homology cobordism}
Using the techniques developed in this paper, we now give a new proof that $\Theta^3_\Z$ has a $\Z^\infty$ subgroup.

\begin{proof}[Proof of Theorem~\ref{thm:infgen}]
 Consider the Brieskorn spheres
$$ Y_p =  \Sigma(p, 2p-1, 2p+1),\ p\geq 3, \ p \text{ odd}.$$

Stoffregen \cite[Theorem 8.9]{Stoffregen2} showed that $Y_p$ is of projective type, with $d(Y_p) = p-1$ and $\bar \mu(Y_p)=0.$

Using Theorem~\ref{thm:dssums} and Corollary~\ref{cor:dssums}, we can calculate the involutive correction terms of connected sums of $Y_p$'s. Specifically, if $p_1 \leq p_2 \leq \dots \leq p_k$, then
$$\du(Y_{p_1} \# \dots \# Y_{p_k}) = \sum_{i=1}^k d(Y_{p_i}), \ \ \
 \dl(Y_{p_1} \# \dots \# Y_{p_k})= \sum_{i=1}^{k-1} d(Y_{p_i}).$$
Hence,
\begin{equation}
\label{eq:yps}
(\du - \dl)(Y_{p_1} \# \dots \# Y_{p_k})= d(Y_{p_k}) = p_k -1.
\end{equation}

We claim that the classes $[Y_p] \in \Theta^3_\Z$ are linearly independent, so they generate a $\Z^\infty$ subgroup. Indeed, suppose we had a linear relation between the classes $[Y_p]$. By grouping the terms according to the sign of their coefficient, we can write the relation as
\begin{equation}
\label{eq:ypps}
 [Y_{p_1}] + [Y_{p_2}] +\dots + [Y_{p_k}] =    [Y_{p'_1}] + [Y_{p'_2}] +\dots + [Y_{p'_l}],
 \end{equation}
where 
$$ p_1 \leq p_2 \leq \dots \leq p_k, \ \ p'_1 \leq p'_2 \leq \dots \leq p'_l,$$
and
$$p_i \neq p'_j \text{ for any } i, j.$$ 
Since $\du$ and $\dl$ are homology cobordism invariants, so is their difference $\du - \dl$. By applying \eqref{eq:yps} to both sides of  \eqref{eq:ypps}, we find that $p_k = p'_l$, which is a contradiction.
\end{proof}

\bibliographystyle{custom} 
\bibliography{biblio} 

\end{document}